\newtheorem{thm}{Theorem}[section]
\newtheorem{lem}{Lemma}[section]
\newtheorem{prop}{Proposition}[section]
\theoremstyle{definition}
\newtheorem{defn}{Definition}[section]
\theoremstyle{remark}
\newtheorem{rem}{Remark}[section]
\numberwithin{equation}{section}
\newcommand{\bu}{\mathbf{u}}
\newcommand{\bv}{\mathbf{v}}
\newcommand{\bw}{\mathbf{w}}
\newcommand{\bff}{\mathbf{f}}
\newcommand{\bx}{\mathbf{x}}
\newcommand{\bh}{\mathbf{h}}
\newcommand{\bg}{\mathbf{g}}
\newcommand{\bphi}{\bm{\phi}}
\newcommand{\bnu}{\bm{\nu}}
\newcommand{\eqnref}[1]{(\ref {#1})}
\def\Oh{{\mathcal O}}
\newcommand{\beq}{\begin{equation}}
\newcommand{\eeq}{\end{equation}}
\title[]
{Dislocations with corners in an elastic body with applications to fault detection}
\author{Huaian Diao}
\address{School of Mathematics and Key Laboratory of Symbolic Computation and Knowledge Engineering of Ministry of Education, Jilin University, Changchun, Jilin, China.}
\email{diao@jlu.edu.cn, hadiao@gmail.com}
\author{Hongyu Liu}
\address{Department of Mathematics, City University of Hong Kong, Kowloon Tong, Hong Kong SAR, China.}
\email{hongyu.liuip@gmail.com; hongyliu@cityu.edu.hk}
\author{Qingle Meng}
\address{Department of Mathematics, City University of Hong Kong, Kowloon Tong, Hong Kong SAR, China.}
\email{mengql2021@foxmail.com; qinmeng@cityu.edu.hk}
\begin{document}
\maketitle

\begin{abstract}
This paper focuses on an elastic dislocation problem that is motivated by applications in the geophysical and seismological communities. In our model, the displacement satisfies the Lam\'e system in a bounded domain with a mixed homogeneous boundary condition. We also allow the occurrence of discontinuities in both the displacement and traction fields on the fault curve/surface. By the variational approach, we first prove the well-posedness of the direct dislocation problem in a rather general setting with the Lam\'e parameters being real-valued $L^\infty$ functions and satisfy the strong convexity condition. Next, by considering that the Lam\'e parameters are constant and the fault curve/surface possesses certain corner singularities, we establish a local characterization of the jump vectors at the corner points over the dislocation curve/surface. In our study, the dislocation is geometrically rather general and may be open or closed. We establish the unique results for the inverse problem of determining the dislocation curve/surface and the jump vectors for both cases.
\medskip

\noindent{\bf Keywords:}~~dislocations, elasticity, corners, slips, well-posedness, inverse problem, uniqueness

\medskip

\noindent{\bf MSC codes:}~~74B05, 94C12, 35A02

\end{abstract}

\section{Introduction}

In this work, we investigate an elastic dislocation problem motivated by applications of crystalline materials, geophysics, and seismology. Building upon the previous studies reported in \cite{BonafedeRivalta1999, Cohen1996, Eshelby1973, Jiang2014, Rivalta2002, Segall2010, XuXuYu2013} and the related literature cited therein, we adopt an elastostatic system with mixed boundary conditions (Problem \eqref{eq:elast1}) to describe our model within an isotropic, homogeneous, and bounded domain $\Omega \subset \mathbb{R}^n$ ($n=2,3$). A distinguishing feature of this model is the allowance for discontinuities in both the displacement field and traction field along the fault curve/surface denoted by $\mathcal{S} \subset \Omega$. We employ the notations $\bff$  and $\bg$ to represent these discontinuities, which are the jump functions associated with the fault throughout this paper. Especially, $\bff$ is also known as a slip in the literature (cf.\cite{AspriBerettaMazzucatoHoop2020, Beretta2008}). Our investigation includes both the direct and inverse dislocation problems. In the direct problem, we aim to determine the elastic displacements of the elastic transmission problem with mixed homogeneous boundary conditions (see Problem \eqref{eq:elast1}), given that the dislocation curve/surface $\mathcal{S}$, the elastic stiffness tensor $\mathbb{C}(\bx)$, and the jump functions $\bff$ and $\bg$ over $\mathcal{S}$ are known a-priori. Conversely, the inverse problem seeks to determine the dislocation curve/surface $\mathcal{S}$ and the jump functions $\bff$ and $\bg$ from displacement field measurements with some regularities requirements about the dislocation curve/surface, and these jump functions. For the inverse dislocation problems, we focus on a single measurement of displacement, which implies that we only require a pair of the Cauchy data on the partial boundary of $\partial \Omega$ to determine $\mathcal{S}$, $\bff$, and $\bg$ with some priori information.

 It is widely assumed in the field of seismic propagation that the displacement has discontinuities, while the traction field remains continuous. The simplest assumption enables a linear relationship between slip and stress traction in the model, as exemplified in \cite{CS1995}. However, other studies, such as the literature \cite{M1949, BM1985, V2009, Pichierri2020, Ar2002book}, specifically address scenarios where the traction field also has discontinuities across the dislocation. Indeed, in the 2D linear elastic model \cite{Pichierri2020}, the displacement and traction fields experience jumps, where the fault is a straight interface. In this research, we aim to explore the theoretical results of a relatively complex model that allows discontinuities in both the displacement and traction fields along the dislocation in $\mathbb{R}^n\,(n=2,3)$. Additionally, we consider dislocations as either open or closed curves or surfaces. Open dislocations have many practical applications, such as creeping faults (cf. \cite{AspriBerettaMazzucato2022}).
In contrast, closed faults in an anisotropic inhomogeneous elastic media are studied in \cite{PowW2021}, where the corresponding well-posedness are derived using the integral equation method. Furthermore, characterizations of non-radiating faults are investigated in \cite{PowW2021}.  This paper also explores closed faults from a mathematical perspective using variational techniques. However, practical applications for closed dislocations have yet to be identified. Our theoretical research may serve as a foundation for future real-world implementations.

There are extensive studies on elastic dislocations in the half-space $\mathbb{R}^n_+$, which can be found  \cite{Volkov2017, VS2019, TV2020, AspriBerettaMazzucatoHoop2020} and the related references cited therein. Indeed,  there are extensive studies on both the direct and inverse problems of elastic dislocations in developing numerical reconstruction algorithms for determining the surface of the dislocations and the associated displacement field from boundary measurements in the half-space  $\mathbb{R}^n_+$ \cite{Volkov2017, VS2019, TV2020, AspriBerettaMazzucatoHoop2020}. For example,  \cite{Volkov2017} studied the linear plane elasticity model and proposed an iterative method to detect the fault plane and unidirectional tangential slip, where the iterative procedure is based on Newton's methods or constrained minimization of a suitable misfit functional by using a finite number of surface measurements. Additionally, \cite{VS2019} considered numerical reconstruction algorithms and Bayesian approaches to quantify uncertainties. Some related discussions on reconstruction and the corresponding non-quantitative stability estimates with constant Lam\'{e} coefficients can be referred to \cite{TV2020}. Recently,  \cite{AspriBerettaMazzucatoHoop2020} also investigated the elastic dislocation problems in the half-space under the assumptions of Lipschitz continuous elastic coefficients and a Lipschitz fault surface. Uniqueness result was derived  in determining the fault geometry and associated slip from the measurement of surface displacement on an open set, provided that the fault has at least one corner singularity, can be represented as a graph in a given coordinate system, and the slip is either tangential or normal to the fault. Other investigations have addressed the dislocation problem within bounded domains, such as those in \cite{Beretta2008, Zwieten2014}, expanding beyond the half-space setting. Recently,  \cite{AspriBerettaMazzucato2022} considered an anisotropic and inhomogeneous elastic medium in 2D, while \cite{AspriBerettaMazzucato20222} extended this to a 3D scenario with elastic dislocations represented as open, oriented fault surfaces in an elastostatic system, featuring discontinuities in the displacement field across these faults. The requirement for the slip to belong to a functional space with good extension properties is crucial for solving the direct problem using variational methods. The authors established the uniqueness of determining both the fault surfaces and the associated slip vectors, while relaxing the regularity requirements to merely Lipschitz continuity, with the fault surfaces represented as graphs in a fixed coordinate system.

 For the direct problem of classical elastic dislocations, it is common practice to employ a lift of the jumps to establish a variational framework that ensures its well-posedness in bounded and unbounded domains. This approach has been extensively discussed in the context of dislocations (cf. \cite{Zwieten2014}). In this paper, our focus is on dislocation problems with certain corner singularities within a bounded domain. There exists a substantial body of work on the singularity analysis of corner transmission problems (cf. \cite{Costabel1993, Dauge1988,DLMW20252, Nicaise1992}), as well as the application of weighted spaces in transmission problems (cf. \cite{Li2013}).  To ensure the consistency of this article, we provide proof of the well-posedness of Problem \eqref{eq:elast1}, where there are jumps in both the displacement and traction fields across the fault curve/surface. Employing a variational approach, we establish the well-posedness of the direct dislocation problem in general scenarios, which includes a particular setting where the Lam\'e  parameters are real-valued $L^\infty$ functions and satisfy the strong convexity condition. Furthermore, we consider both open and closed fault curves/surfaces.

In the context of inverse dislocation problems, it is noted that alternative approaches and additional numerical analyses have been proposed, particularly for polyhedral domains (cf. \cite{Volkov2017}). However, our study focuses on scenarios where the Lam\'e parameters are constant and the fault curve or surface exhibits specific corner singularities (namely, 2D corners/3D edge corners). When $\mathcal{S}$ is closed, it forms the boundary of a convex polygon or convex polyhedron. If $\mathcal{S}$ is open, it is composed of linear piecewise curves in $\mathbb{R}^2$ or piecewise surfaces in $\mathbb{R}^3$, as outlined in Definition \ref{def:poly}. This a-priori information on the geometry of $\mathcal{S}$ is essential for establishing the corresponding uniqueness result from a single measurement.  As pointed out in \cite{CK,CK2018}, achieving uniqueness in inverse scattering problems from a single measurement is highly challenging, and certain a-priori information on the geometry is unavoidable.  Related studies have explored inclusions with corners and semilinear terms using minimal measurements \cite{DFLW2025}. Furthermore, effective electromagnetic scattering problems have been analyzed in \cite{DLMW20251}. Unique shape identification for inverse elastic and electromagnetic scattering problems in layered polygonal or corona-type media, based on a single far-field measurement, has been investigated in \cite{DFLY24,DTLT24}.

To characterize the jump functions at the corner points along the dislocation curve or surface, we conduct a local singularity analysis of the elastic field around these points using a microlocal approach. This requires certain H\"older regularity assumptions on the slip and traction jumps around the underlying corner (detailed in Definition \ref{def:Admis2}). In the 3D case, an additional assumption that both the slip field and the traction jump are independent of one spatial variable is necessary, as clarified in Definition \ref{def:Admis2}. We employ Complex Geometric Optic (CGO) solutions for the underlying elastic system to characterize the jump vectors at 2D corner and 3D edge points. Our study involves intricate and delicate analysis, and we establish unique results for the inverse problem of determining the dislocation curve or surface and the associated jumps from a single measurement, whether the fault curves or surfaces are open or closed.  Our recent work \cite{DLM2025} establishes that dislocations, their associated jump functions, and interfaces in multi-layered elastic solids can also be uniquely determined from a single passive measurement. 

The paper is structured as follows. In Section~\ref{se:0}, we introduce the mathematical setup of our study and establish the well-posedness of the dislocation problem. In Section~\ref{sec:1}, we propose some admissible assumptions about $(\mathcal{S};\bff,\bg)$ for the scenario that $\mathcal{S}$ may be closed or open. Assuming that Lam\'e parameters are constants and the fault curve/surface possesses certain corner singularities, we establish a local characterization of the jump vectors at the certain corner points over the dislocation curve/surface. Furthermore, we also develop global uniqueness results for the inverse dislocation problem for determining the dislocation curve/surface $\mathcal{S}$ and the jump vectors $\bff$ and $\bg$ in these two cases with the additional geometrical assumption about the dislocation curve/surface. In Section~\ref{se:pre}, we derive several local results for the jump vectors $\bff$ and $\bg$ at these certain corner points along $\mathcal{S}$. Section~\ref{se:proof} is devoted to proving the uniqueness of the inverse problem presented in Section~\ref{sec:1}.

\section{Mathematical setup and the direct problem}\label{se:0}
In this section, we pay attention to the mathematical setup of the dislocation problem and the study of the well-posedness of the direct problem.

\subsection{Mathematical setup}
We first introduce a geometric and mathematical setup for our study; see Fig.~\ref{fig:1} for a schematic illustration in 2D.
\begin{figure}[ht]
\centering
    {\includegraphics[width=0.8\textwidth]{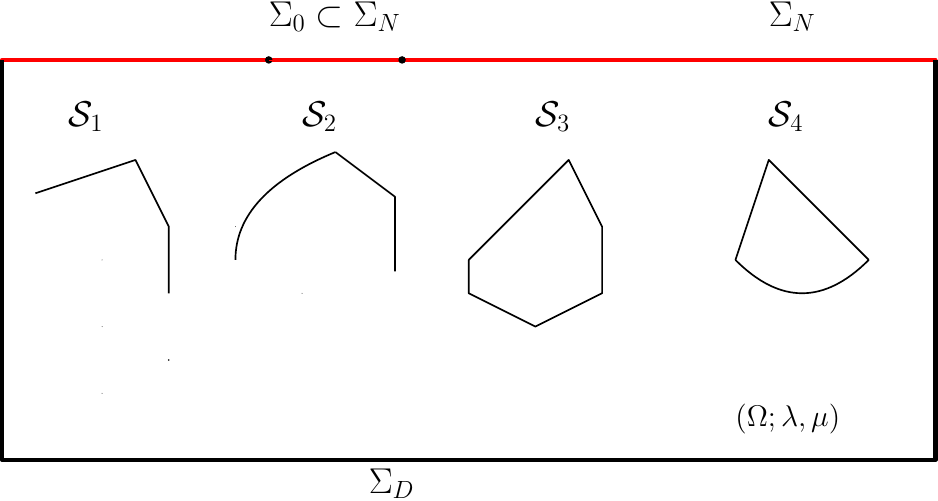}}
    \caption{Schematic illustration of  an elastic dislocation, where $\mathcal S_i\ (i=1,\ldots,4)$ are fault curves and $\partial \Omega=\Sigma_D \cup \Sigma_N$.\label{fig:1}  }
\end{figure}
Let $\lambda(\mathbf{x})$ and $\mu(\mathbf{x})$, $\mathbf{x}=(x_j)_{j=1}^n\in\Omega$, be real-valued $L^\infty$ functions, which are referred to as the Lam\'e parameters of the elastic solid $\Omega$. We define $\mathbb{C}(\bx)=(C_{ijkl}(\mathbf{x}))_{i,j,k,l=1}^n$, $\bx\in\Omega$, as a four-rank tensor given by:
\begin{equation}\label{eq:tensor1}
\mathbb{C}(\mathbf{x}):=\lambda(\mathbf{x})\mathbf{I}\otimes\mathbf{I}+2\mu(\mathbf{x})\mathbb{I},\ \ \text{where}\ \ C_{ijkl}(\mathbf{x})=\lambda(\mathbf{x})\delta_{ij}\delta_{kl}+\mu(\mathbf{x})(\delta_{ik}\delta_{jl}+\delta_{il}\delta_{jk}),
\end{equation} Here, $\mathbf{I}$ and $\mathbb{I}$ respectively represent the identity operators of the second- and fourth-rank tensors, and $\delta_{ij}$ is the Kronecker delta. We assume that the Lam\'e parameters $\lambda(\mathbf{x})$ and $\mu(\mathbf{x})$ satisfy the strong convexity conditions:
\begin{equation}\label{eq:tensor2}
\mu(\mathbf{x})> 0, \quad 2\mu(\mathbf{x})+n\lambda(\mathbf{x})>0,\quad \forall \mathbf{x}\in\Omega,
\end{equation}
These conditions ensure the uniform strong convexity of the elastic stiffness tensor $\mathbb{C}(\bx)$. For $\mathbf{A}=(a_{ij})_{i,j=1}^n$, we define the operation ``$:$" as:
\begin{equation}\label{eq:op1}
\mathbb{C}:\mathbf{A}=\big(C_{ijkl}:\mathbf{A}\big)_{ij}=(\Pi)_{ij},\quad\text{where}\quad \Pi_{ij}:=\sum_{k,l=1}^n C_{ijkl} a_{kl}.
\end{equation} Let $\mathbf{u}(\bx)=(u_j(\bx))_{j=1}^n$, $\bx\in\Omega$, with each $u_j(\bx)$ being a complex-valued function. We introduce the Lam\'e operator $\mathcal{L}$ as:
\begin{align*}
\mathcal{L}\mathbf{u}:=\nabla\cdot(\mathbb{C}:\nabla \mathbf{u})=\mu\Delta\bu+(\lambda+\mu)\nabla(\nabla\cdot\bu),
\end{align*} 
where $\nabla \mathbf{u}:=(\partial_j u_i)_{i,j=1}^{n}$, $\nabla\cdot \bu :=\sum^{n}_{j=1}\partial_j u_j$, and $\partial_ju_j=\partial u_j/\partial x_j$. Furthermore, let $\bnu\in\mathbb{S}^{n}$ be the unit normal vector, and we define the traction operator as:
\begin{align*}
\mathcal{T}_{\bnu}(\mathbf{u})=\bnu\cdot(\mathbb{C}:\nabla\mathbf{u}).
\end{align*}

Let $\mathcal{S}\subset\Omega$ be an oriented Lipschitz curve/surface, which can be open or closed. We define $\mathcal{S}^\pm$ as the two sides of $\mathcal{S}$, with $\mathcal{S}^+$ representing the side where $\bnu$ points outward. We denote the jump of a function or tensor field $\mathbf{p}$ across $\mathcal{S}$ as $[\mathbf{p}]_\mathcal{S}:=\mathbf{p}|_{\mathcal{S}}^+-\mathbf{p}|_{\mathcal{S}}^-$, where $\mathbf{p}|_{\mathcal{S}}^\pm$ represent the non-tangential limits of $\mathbf{p}$ on $\mathcal{S}^\pm$, respectively. The elastic dislocation problem that we consider allows the occurrence of discontinuities in both the displacement and traction fields, denoted by $\bff$ and $\bg$, respectively. 

\subsection{Mathematical model}

In this paper, our main focus is on the following elastostatic system for $\mathbf{u}\in H^1(\Omega\backslash\overline{\mathcal{S}})^n$:
\begin{alignat}{2}\label{eq:elast1}
\begin{cases} \mathcal{L}\mathbf{u}(\mathbf{x})=\mathbf{0},& \mathbf{x}\in\Omega\backslash\overline{\mathcal{S}},\\
\mathcal{T}_{\bnu}\mathbf{u}\big|_{\Sigma_N}=\mathbf{0},&\mathbf{u}\big|_{\Sigma_D}=\mathbf{0},\\
[\mathbf{u}]_\mathcal{S}=\mathbf{f},&[\mathcal{T}_{\bnu} \mathbf{u}]_\mathcal{S}=\mathbf{g}.
\end{cases}
\end{alignat} Here, $\partial\Omega=\Sigma_D\cup\Sigma_N$ represents a Lipschitz partition of $\partial\Omega$.
To investigate both the direct and inverse dislocation problems, we introduce some relevant function spaces for the jumps $\bff$ and $\bg$. Note that the function spaces differ depending on whether $\mathcal{S}$ is closed or open.

\medskip {\noindent {\bf Class 1:}
When $\mathcal{S}$ is closed, we consider jump functions $\bff$ and $\bg$ that satisfy
\begin{equation*} \mathbf{f}\in H^{\frac{1}{2}}(\mathcal{S})^n\quad \text{and} \quad \mathbf{g}\in H^{-\frac{1}{2}}(\mathcal{S})^n.
\end{equation*}

\medskip
\noindent {\bf Class 2:}
When $\mathcal{S}$ is open, we assume that $\mathbf{f}$ and $\mathbf{g}$ belong to appropriate weighted spaces with a good extension property on the curve/surface, which is standard in the literature (cf. \cite{AspriBerettaMazzucato2022, AspriBerettaMazzucato20222, Lions1973}). For the sake of the completeness of the paper, we provide the detailed proof in the following. Let $H_{0}^{\frac{1}{2}}(\mathcal{S})^n$ denote the closure of $ C_0^\infty(\mathcal{S})$ with respect to the norm $\|\cdot\|_{H^{\frac{1}{2}}(\mathcal{S})^n}$. For the case that $\mathcal{S}$ is an open bounded Lipschitz domain in $\mathbb{R}^n$, it is well known that $H_{0}^{\frac{1}{2}}(\mathcal{S})^n=H^{\frac{1}{2}}(\mathcal{S})^n$ and the extension operator from $H^{1/2}_0(\mathcal{S})$ to $H^{1/2}_0(\mathcal{\mathbb{R}}^n)$ is not continuous. Then
 we introduce the so-called {\it Lions-Magenes} space $H_{00}^{\frac{1}{2}}(\mathcal{S})^n$ (see \cite{Lions1973}) as follows:
\begin{align*}
H_{00}^{\frac{1}{2}}(\mathcal{S})^n&:=\Big\{ \bu\in H_{0}^{\frac{1}{2}}(\mathcal{S})^n;\,\,\varrho^{-\frac{1}{2}} \bu\in L^2(\mathcal{S})^n      \Big\}, 
\end{align*}
where $\varrho\in C^\infty(\overline{\mathcal{S}})$ denotes a weight function with these properties: (1) $\varrho$ has the same order as the distance to the boundary (i.e., $\lim\limits_{\bx\rightarrow \bx_0}\frac{\varrho(\bx)}{\mathrm{d}(\bx,\partial \mathcal{S})}=d\neq 0,\, \forall \,\bx_0\,\in \partial \mathcal{S}$); and (2) $\varrho(\bx)$ is positive in $\mathcal{S}$ and $\varrho$ vanishes on $\partial \mathcal{S}$.
This space is equipped with the norm
\begin{alignat*}{2}
\|\bu\|_{H_{00}^{\frac{1}{2}}(\mathcal{S})^n}&=\|\bu\|_{H^{\frac{1}{2}}(\mathcal{S})^n}+\|\varrho^{-\frac{1}{2}}\bu\|_{L^2(\mathcal{S})^n}
&\,\quad\mbox{for}\,\,&\bu\,\in H_{00}^{\frac{1}{2}}(\mathcal{S})^n.
\end{alignat*}}
In addition, $\bg$ belongs to the space $ H^{-\frac{1}{2}}_0(\mathcal{S})^n$  which is the dual space of $H_{00}^{\frac{1}{2}}(\mathcal{S})^n$.
More details about the weighted spaces can be found in \cite{Mclean2010, Tartar2006, Cessenat1998}.
Indeed, when $\mathcal{S}$ is a curve in the 2D case, $\partial \mathcal{S}$ corresponds to the set of endpoints of $\mathcal{S}$. Similarly, when $\mathcal{S}$ is a surface in the 3D case, $\partial \mathcal{S}$ corresponds to the boundary curve of $\mathcal{S}$. As discussed in \cite{AspriBerettaMazzucato2022, AspriBerettaMazzucato20222, Tartar2006}, we know that $H^{1/2}_{00}(\mathcal{S})$ is the optimal subspace of the space $H^{1/2}(\Gamma)$, whose element can be continuously extended by zero to a component of $H^{1/2}(\Gamma)$. Here, $\Gamma=\overline{\mathcal{S}}\cup\Gamma_0$ is a closed Lipschitz curve/surface extended by $\mathcal{S}$ and 
satisfying $\Gamma\cap \partial \Omega=\emptyset$, where $\Gamma_0$ is a curve or a surface linking with the boundary $\partial\mathcal{S}$ satisfying $ \Gamma_0\cap( \mathcal{S} \backslash \partial \mathcal{S}) =\emptyset $. Hence, the Lipschitz domain $\Omega$ can be partitioned into two connected subdomains $\Omega_1$ and $\Omega^c_1=\Omega\backslash\overline{\Omega}_1$, where $\partial \Omega_1=\Gamma$ and $\partial \Omega^c_1=\Gamma\cup \partial \Omega$.
Let $\bff$ be continuously extended to $\widetilde{\bff}\in H^{1/2}(\Gamma)^n$ by zero on $\Gamma \backslash \mathcal{S}$ and $\bg$ be continuously extended to $\widetilde{\bg}\in H^{-\frac{1}{2}}(\Gamma)^n$ by zero on $\Gamma \backslash \mathcal{S}$. That is to say
\begin{equation}\label{eq:f,g}
\widetilde{\bff}=\begin{cases}
\bff, \quad \bx\in \mathcal{S},\\
{\bf 0},\quad \bx \in \Gamma\backslash \overline{\mathcal{S}}
\end{cases}
\quad \mbox{and}\qquad
\widetilde{\bg}=\begin{cases}
\bg, \quad \bx\in \mathcal{S},\\
  {\bf 0},\quad \bx \in \Gamma\backslash \overline{\mathcal{S}}.
\end{cases}
\end{equation}
In particular, when $\mathcal{S}$ is closed, $\Gamma=\mathcal{S}$, $\widetilde{\bff}=\bff$ and $\widetilde{\bg}=\bg$.
 Regardless of whether $\mathcal{S}$ is open or closed, we use
\begin{align}\label{eq:omega1}
	\Omega_1:= {\mathrm {enclose}}(\mathcal{S})
\end{align}
 to denote the aforementioned domain satisfying $\partial \Omega_1=\Gamma$. Similarly, when $\mathcal{S}$ is closed, let the domain ${\mathrm {enclose}}(\mathcal{S})$ satisfy that $\partial \left(\mathrm {enclose}(\mathcal{S})\right) =\mathcal{S}. $

We now are in the position to show the existence of a unique weak solution $\mathbf{u}\in H^1_{\Sigma_D,\Sigma_N}(\Omega\backslash\overline{\mathcal{S}})^n$ to Problem \eqref{eq:elast1} corresponding with the boundary data on $\Sigma_D$ and $\Sigma_N$.
\begin{thm}
There exists a unique solution $\bu\in H_{\Sigma_D,\Sigma_N}^1(\Omega\backslash\overline{\mathcal{S}})^n$ to  Problem~\eqref{eq:elast1}.
\end{thm}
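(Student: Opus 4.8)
The plan is to recast Problem~\eqref{eq:elast1} as a coercive sesquilinear variational problem on a single Hilbert space and apply the Lax--Milgram lemma. The displacement--jump condition $[\bu]_\mathcal{S}=\bff$ is an essential condition, to be removed by a lift; the Neumann condition on $\Sigma_N$ and the traction jump $[\mathcal{T}_{\bnu}\bu]_\mathcal{S}=\bg$ are natural and will be carried by the right-hand side. \textbf{Step 1 (lift).} Using the zero extension $\widetilde{\bff}\in H^{1/2}(\Gamma)^n$ from \eqref{eq:f,g} — available precisely because $\bff\in H^{1/2}(\mathcal{S})^n$ in Class~1 and $\bff\in H^{1/2}_{00}(\mathcal{S})^n$ in Class~2 — I would compose a bounded right inverse of the trace map $H^{1}(\Omega_1)^n\to H^{1/2}(\Gamma)^n$ with extension by zero to $\Omega\backslash\overline{\Omega}_1$, obtaining $\bu_f\in H^1(\Omega\backslash\overline{\mathcal{S}})^n$ supported in $\overline{\Omega}_1$, with $[\bu_f]_\mathcal{S}=\bff$ and $\|\bu_f\|_{H^1(\Omega\backslash\overline{\mathcal{S}})^n}\le C\|\bff\|$. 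Since $\Gamma\cap\partial\Omega=\emptyset$, $\bu_f$ vanishes near $\partial\Omega$, hence meets the homogeneous conditions on $\Sigma_D$ and $\Sigma_N$, and it is continuous across $\Gamma_0$ because $\widetilde{\bff}\equiv\mathbf 0$ there.

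\textbf{Step 2 (reduced weak form).} With $V:=\{\bphi\in H^1(\Omega\backslash\overline{\mathcal{S}})^n:\ [\bphi]_\mathcal{S}=\mathbf 0,\ \bphi|_{\Sigma_D}=\mathbf 0\}$ (whose elements belong to $H^1(\Omega)^n$), and $\bu=\bv+\bu_f$, I would show that $\bu$ solves \eqref{eq:elast1} if and only if $\bv\in V$ satisfies
\begin{equation*}
a(\bv,\bphi)=F(\bphi)\qquad\text{for all }\ \bphi\in V,
\end{equation*}
where $a(\bv,\bphi):=\int_\Omega\big(\mathbb{C}(\bx):\nabla\bv\big):\overline{\nabla\bphi}\,d\bx$ and $F(\bphi):=-\big\langle\widetilde{\bg},\bphi|_\Gamma\big\rangle_{\Gamma}-\int_{\Omega\backslash\overline{\mathcal{S}}}\big(\mathbb{C}(\bx):\nabla\bu_f\big):\overline{\nabla\bphi}\,d\bx$, with $\langle\cdot,\cdot\rangle_\Gamma$ the $H^{-1/2}(\Gamma)^n$--$H^{1/2}(\Gamma)^n$ pairing and $\widetilde{\bg}$ as in \eqref{eq:f,g}. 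The forward direction is obtained by pairing $\mathcal{L}\bu=\mathbf 0$ with $\overline{\bphi}$ and integrating by parts separately over $\Omega_1$ and $\Omega\backslash\overline{\Omega}_1$: the $\Sigma_D$ term drops since $\bphi|_{\Sigma_D}=\mathbf 0$, the $\Sigma_N$ term by the Neumann condition, the $\Gamma_0$ terms cancel (continuity of $\bphi$ and $\mathcal{T}_{\bnu}\bu$, opposite normals), and the $\mathcal{S}$ terms combine into $-\int_\mathcal{S}[\mathcal{T}_{\bnu}\bu]_\mathcal{S}\cdot\overline{\bphi}\,dS=-\langle\widetilde{\bg},\bphi|_\Gamma\rangle_\Gamma$. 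The converse follows by testing with $\bphi\in C_0^\infty(\Omega_1)^n$ and $\bphi\in C_0^\infty(\Omega\backslash\overline{\Omega}_1)^n$ to recover $\mathcal{L}\bu=\mathbf 0$, and then integrating by parts back to recover the Neumann condition and the traction jump.

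\textbf{Step 3 (Lax--Milgram).} Boundedness of $a$ on $V\times V$ is immediate from $\lambda,\mu\in L^\infty(\Omega)$, and $F$ is a bounded antilinear functional on $V$ by the trace inequality, $\widetilde{\bg}\in H^{-1/2}(\Gamma)^n$, and the estimate from Step~1. The crux is coercivity. The strong convexity condition \eqref{eq:tensor2} gives, after splitting the symmetric part of $\nabla\bphi$ into its trace-free and spherical components, the pointwise bound $\big(\mathbb{C}(\bx):\nabla\bphi\big):\overline{\nabla\bphi}\ge c_0\bigl|\nabla\bphi+(\nabla\bphi)^{\top}\bigr|^{2}$ with $c_0>0$ depending only on $\inf_\Omega\mu$ and $\inf_\Omega(2\mu+n\lambda)$; hence $a(\bphi,\bphi)\ge c_0\|\nabla\bphi+(\nabla\bphi)^{\top}\|_{L^2(\Omega)}^2$. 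Korn's inequality on the Lipschitz domain $\Omega$, combined with the fact that the only rigid displacement in $V$ is the zero field — here one uses that $\Sigma_D$ carries positive surface measure, as encoded in the notation $H^1_{\Sigma_D,\Sigma_N}$ — upgrades this to $a(\bphi,\bphi)\ge c\|\bphi\|_{H^1(\Omega)^n}^2$. Lax--Milgram then produces a unique $\bv\in V$, hence $\bu=\bv+\bu_f\in H^1_{\Sigma_D,\Sigma_N}(\Omega\backslash\overline{\mathcal{S}})^n$; this $\bu$ is independent of the chosen lift, since the difference of any two solutions of \eqref{eq:elast1} lies in $V$ and is annihilated by the coercive form $a$, hence is zero.

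The two points I expect to require the most care are, first, the construction of the lift and the correct reading of the pairing $\langle\widetilde{\bg},\cdot\rangle_\Gamma$ in the open case (Class~2) — this is exactly where the Lions--Magenes space $H^{1/2}_{00}(\mathcal{S})^n$, its extension-by-zero property, and the identification of $H^{-1/2}_0(\mathcal{S})^n$ with a subspace of $H^{-1/2}(\Gamma)^n$ enter; and second, the Korn-type coercivity of $a$ on the mixed-boundary space $V$.
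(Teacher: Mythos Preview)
Your proposal is correct and follows essentially the same variational strategy as the paper: lift the displacement jump via the zero extension $\widetilde{\bff}$ on $\Gamma$, reduce to a coercive bilinear form on the jump-free space $\{\bphi\in H^1(\Omega)^n:\bphi|_{\Sigma_D}=\mathbf 0\}$, and apply Lax--Milgram together with Korn's inequality and the strong convexity \eqref{eq:tensor2}. The paper differs only cosmetically, placing its lift $\bu_{\widetilde{\bff}}$ in $\Omega_1^c$ (as the solution of an auxiliary Dirichlet problem) rather than in $\Omega_1$ via a trace extension as you do; just watch the sign of $[\bu_f]_\mathcal{S}$ when you extend by zero from the inside, since with the paper's convention $\mathcal{S}^+\subset\partial\Omega_1^c$ and your construction actually gives $[\bu_f]_\mathcal{S}=-\bff$.
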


\begin{proof}
In the following, we first prove the well-posedness of Problem \eqref{eq:elast1} for the case that $\mathcal{S}$ is open. When $\mathcal{S}$ is closed, the corresponding proof can be obtained similarly.  We shall use the variational technique (cf.\cite{Hahenr2000}) to verify that there exists a unique solution to this problem. From \cite[Lemma~3.2]{AspriBerettaMazzucato2022} and \cite [Remark~3.3]{AspriBerettaMazzucato20222}, Problem ~\eqref{eq:elast1} can be recast equivalently as the following PDE system for $\bu_1\in H^1(\Omega_1)^n$ and $\bu_2\in H^1(\Omega^c_1)^n$ such that
\begin{align}\label{eq:elast1'}
\begin{cases}
\mathcal{L}\mathbf{u}_1(\mathbf{x})=\mathbf{0},\quad \mathbf{x}\in\Omega_1,\\
\mathcal{L}\mathbf{u}_2(\mathbf{x})=\mathbf{0},\quad \mathbf{x}\in\Omega^c_1,\\
\mathcal{T}_{\bnu}\mathbf{u}_2\big|_{\Sigma_N}=\mathbf{0},\,\,\mathbf{u}_2\big|_{\Sigma_D}=\mathbf{0},\\
\bu_2\big|_\Gamma-\bu_1\big|_\Gamma=\widetilde{\mathbf{f}},
\\ \mathcal{T}_{\bnu} \mathbf{u}_2\big|_\Gamma-\mathcal{T}_{\bnu} \mathbf{u}_1\big|_\Gamma=\widetilde{\mathbf{g}},
\end{cases}
\end{align}
where $\widetilde{\bff}\in H^{1/2}(\Gamma)^n$ and $\widetilde{\bg}\in H^{-1/2}(\Gamma)^n$ defined by \eqref{eq:f,g} are given. Let $\bu_{\,\widetilde{\bff}}$ be the unique solution to the following Dirichlet boundary value problem
\begin{equation*}
\mathcal{L}\,\mathbf{u}_{\,\widetilde{\bff}}=\mathbf{0}  \quad\mbox{in}\quad\Omega^c_1,\quad
\mathbf{u}_{\,\widetilde{\bff}}=\widetilde{\bff} \quad\mbox{on}\quad \Gamma,\quad
\mathbf{u}_{\,\widetilde{\bff}}={\bf 0}\quad\mbox{on}\quad {\partial \Omega}.
\end{equation*}
Let us next consider the variational formulation of Problem~\eqref{eq:elast1'}: Find $\bw\in H_{\Sigma_D,\Sigma_N}^1(\Omega)^n$ satisfying
\begin{align}
&\int_{\Omega_1}(\mathbb{C}:\nabla \mathbf{w}):\nabla\overline{\bphi}\,\mathrm d\mathrm \bx+\int_{\Omega^c_1}(\mathbb{C}:\nabla \mathbf{w}):\nabla\overline{\bphi}\,\mathrm d\mathrm \bx
+\int_{\Gamma}\widetilde{\bg}\cdot\overline{\bphi}\,\mathrm d\sigma\label{eq:v-form}\\
&=\int_{\Sigma_N}\mathcal{T}_{\bnu}(\mathbf{u_{\widetilde{\bff}}})\cdot \overline{\bphi}\,\mathrm d\sigma-\int_{\Omega^c_1}(\mathbb{C}:\nabla \mathbf{\bu_{\widetilde{\bff}}}):\nabla\overline{\bphi}\,\mathrm d\mathrm \bx,\qquad \forall\,\,\overline{\bphi}\,\,\in H_{\Sigma_D,\Sigma_N}^1(\Omega)^n. \nonumber
\end{align}
If $\bw$ is a solution of \eqref{eq:v-form}, then it is easy to show by choosing sufficiently smooth test functions that $\bu_1:=\bw\big|_{\Omega_1}$ and $\bu_2:=\bw\big|_{\Omega^c_1}+\bu_{\,\widetilde{\bff}}$ satisfy Problem~\eqref{eq:elast1'}. Conversely, multiplying these equations in~\eqref{eq:elast1'} by a test function and using transmission conditions, denote $\bw:=\bu_1$ in $\Omega_1$ and $\bw:=\bu_2-\bu_{\widetilde{\bff}}$ in $\Omega^c_1$, we can directly show that $\bw$ belongs to $  H_{\Sigma_D,\Sigma_N}^1(\Omega)^n$ and fulfils \eqref{eq:v-form}, where $(\bu_1,\bu_2)$ is a solution to Problem~\eqref{eq:elast1'}.

Let $a(\cdot, \cdot)$ and $\mathcal{F}(\cdot)$ be defined as follows:
\begin{align*}
a(\bw,\bphi)&:=\int_{\Omega_1}(\mathbb{C}:\nabla \mathbf{w}):\nabla\overline{\bphi}\,\mathrm d\mathrm \bx+\int_{\Omega^c_1}(\mathbb{C}:\nabla \mathbf{w}):\nabla\overline{\bphi}\,\mathrm d\mathrm \bx,\\
\mathcal{F}(\bphi)&:=-\int_{\Gamma}\widetilde{\bg}\cdot\overline{\bphi}\,\mathrm d\sigma+\int_{\Sigma_N}\mathcal{T}_{\bnu}(\mathbf{u_{\,\widetilde{\bff}}})\cdot \overline{\bphi}\,\mathrm d\sigma-\int_{\Omega^c_1}(\mathbb{C}:\nabla \mathbf{\bu_{\,\widetilde{\bff}}}):\nabla\overline{\bphi}\,\mathrm d\mathrm \bx.
\end{align*}
Hence, we can rewrite \eqref{eq:v-form} as the problem of finding $\bw\in H^1_{\Sigma_D,\Sigma_N}(\Omega)^n$ such that
$$
a(\bw,\bphi)=\mathcal{F}(\bphi)\quad\quad \mbox{for all}\quad\overline{\bphi}\in H_{\Sigma_D,\Sigma_N}^1(\Omega)^n.
$$

It is easy to see that $\mathcal{F}$ is a linear continuous operator from $H^1_{\Sigma_D,\Sigma_N}(\Omega)^n$ to $\mathbb{C}$. Combining Korn's inequality (cf.\cite{Mclean2010}) with the uniform strong convexity of $\mathbb{C}$, directly leads to the fact that the bilinear form $a(\cdot, \cdot)$ is strictly coercive and bounded. From the Lax-Milgram Lemma, there exists a unique solution $\bw\in H_{\Sigma_D,\Sigma_N}^1(\Omega)^n$ of \eqref{eq:v-form} such that
\begin{align*}
\|\bw\|_{H_{\Sigma_D,\Sigma_N}^1(\Omega)^n}&\leq\Big\|\bw\big|_{\Omega_1}\Big\|_{H^1(\Omega_1)^n}+\Big\|\bw\big|_{\Omega^c_1}\Big\|_{H^1(\Omega^c_1)^n}\leq \|\mathcal{F}\|
 \leq C\big(\|\widetilde{\bff}\|_{H^{\frac{1}{2}}(\Gamma)^n}+\|\widetilde{\bg}\|_{H^{-\frac{1}{2}}(\Gamma)^n}\big)\\
 &\leq C\big(\|\bff\|_{H_{00}^{\frac{1}{2}}(\mathcal{S})^n}+\|\bg\|_{H^{-\frac{1}{2}}_0(\mathcal{S})^n}\big).
 \end{align*}
The above results imply immediately that there exists a unique solution $\bu\in H_{\Sigma_D,\Sigma_N}^1(\Omega\backslash\overline{\mathcal{S}})^n$ to Problem~\eqref{eq:elast1} and $\bu$ can be estimated by $\bff$ and $\bg$ with respect to $H_{00}^{\frac{1}{2}}(\mathcal{S})^n$ norm and $H^{-\frac{1}{2}}_0(\mathcal{S})^n$ norm, respectively.

The proof is complete.
\end{proof}

\section{The inverse problem and main results}\label{sec:1}
In this section, we are devoted to studying the uniqueness results of the inverse dislocation problem, which is composed of identifying the dislocations $\mathcal{S}$ and jump functions $\bff$ and $\bg$ over $\mathcal{S}$ by observation data on an open set $\Sigma_0\subset\Sigma_N$. For our study, we formulate the inverse problem as
\beq\nonumber
\Lambda_{\mathcal{S};\bff,\bg}=\bu\big|_{\Sigma_0},
\eeq
where $\bu$ is the solution to Problem~\eqref{eq:elast1}. That is, $\Lambda_{\mathcal{S};\bff,\bg}$ contains the elastic deformation data caused by the dislocation $(\mathcal{S};\bff,\bg)$ and observed on $\Sigma_0\subset \Sigma_N$. The inverse problem we are devoted to can be formulated by
\beq\label{eq:IP2}
\Lambda_{\mathcal{S};\bff,\bg}(\bu\big|_{\Sigma_0,\Sigma_N})\rightarrow\mathcal{S},\bff,\bg.
\eeq

The uniqueness results can be proved under some assumptions about the geometry of the dislocation $\mathcal{S}$ and a {\it priori} information about Lam\'e parameters $\lambda(\bx)$ and $\mu(\bx)$ of the elastic solid $\Omega$, where $\lambda$ and $\mu$ are real constants and satisfy the strong convexity condition \eqref{eq:tensor2}.

To describe the geometry of $\mathcal{S}$, we introduce some notations for the geometric setup; see Fig.~\ref{fig:2} for a schematic illustration. Given $\bx_c\in \mathbb{R}^2$ and constants $\theta_m,\,\theta_M\in(-\pi,\pi)$ such that $\theta_M-\theta_m\in (0,\pi)$, we consider the following open sector
\begin{align}\label{geo:1}
\mathcal{K}_{\bx_c}\,=\left\{ \bx\in \mathbb{R}^2\big| \,{\bf 0}<\bx=\bx_c+(r\cos\theta,r\sin\theta)^\top, \,\, \theta_{m}<\theta<\theta_{M},r>0 \right\}
\end{align}
with boundaries
\begin{align*}
	\Gamma^+_{\bx_c}&=\left\{\bx\in \mathbb{R}^2\big|\,\bx=\bx_c+(r\cos\theta_M,r\sin\theta_M)^\top,\, r>0  \right\},\\
	\Gamma^-_{\bx_c}&=\left\{\bx\in \mathbb{R}^2\big|\, \bx=\bx_c+(r\cos\theta_m,r\sin\theta_m)^\top,\, r>0  \right\}.
\end{align*}
The point $\bx_c$ is said to be a planar corner point (also named 2D corner point) with opening angle $\theta_M-\theta_m$ and boundaries $\Gamma^\pm_{\bx_c}$.
Let
\begin{alignat}{2}
\mathcal{C}_{\bx_c,h}&:=\mathcal{K}_{\bx_c}\cap B_h(\bx_c),\quad& \Gamma^\pm_{\bx_c,h}&:=\Gamma^\pm_{\bx_c}\cap B_h(\bx_c),\nonumber\\
\Lambda^{\bx_c}_{h}&:=\mathcal{K}_{\bx_0}\cap \partial B_h(\bx_c),\quad & \Sigma_{\bx_c}&:=\mathcal{C}_{\bx_c,h}\backslash \mathcal{C}_{\bx_c,h/2},\label{eq:ball1}
\end{alignat}
where $B_h(\bx_c)$ denotes an open disk centered at $\bx_c$ of radius $h\in \mathbb{R}_+$.
For the sake of brevity, we use $B_{h}, \,\mathcal{K},\, \Gamma^\pm, \,\mathcal{C}_{h},\,\Gamma^\pm_{h},\,\Lambda_{h}$ and $\Sigma$ to represent the corresponding notations at the origin.

\begin{figure}[ht]
\centering
\subfigure{\includegraphics[width=0.40\textwidth]{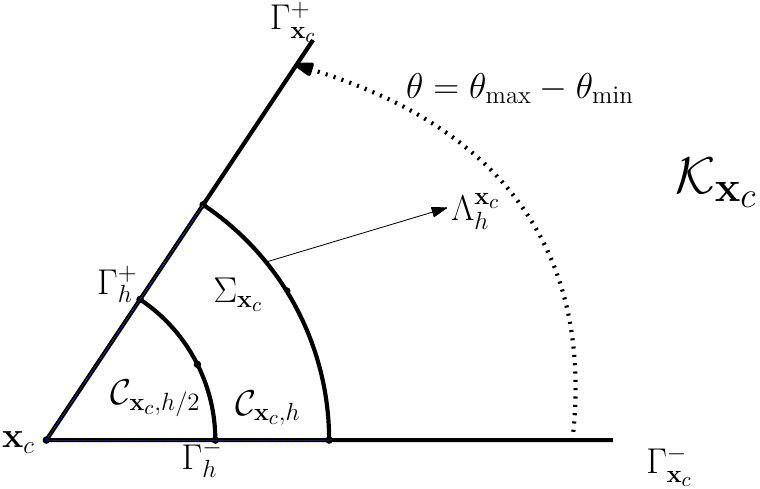}}\qquad
\subfigure{\includegraphics[width=0.44\textwidth]{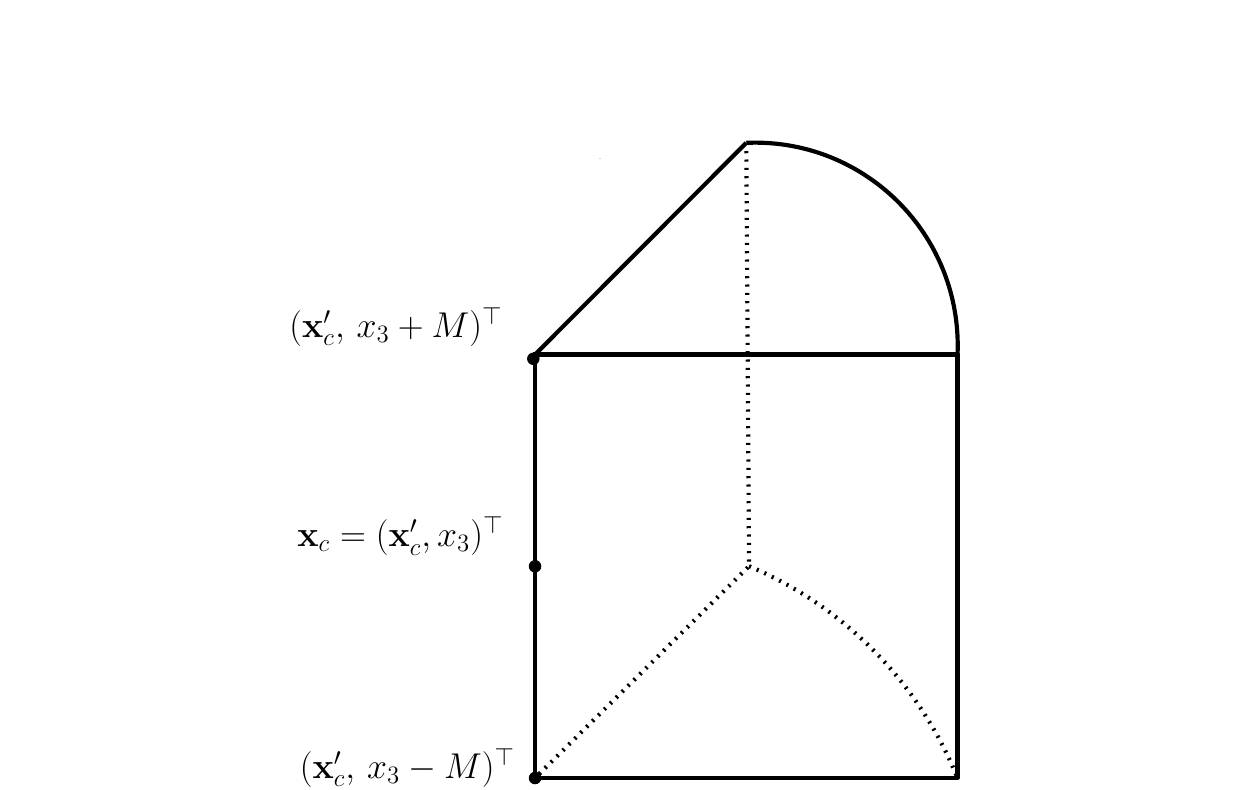}}\\
\caption{Schematic illustration of a 2D corner/3D edge corner. \label{fig:2}}
\end{figure}

\subsection{Main uniqueness results}

Before giving the unique results in Theorem~\ref{th:main_loca}-Theorem \ref{th:main2}, we introduce some admissible conditions about the dislocation for our subsequent study.

\begin{defn}\label{def:Admis2}
Let $\Omega$ be a bounded Lipschitz domain in $\mathbb{R}^n (n=2,3)$.
We say that $(\mathcal{S};\bff,\bg)$ belongs to the admissible class $\mathcal{T}$ if the following conditions are fulfilled:
\begin{itemize}
\item[{\rm (1)}] In $\mathbb R^2$, $\mathcal{S}\subset \mathbb{R}^{2}$ is an oriented  Lipschitz curve. There exists at least one planar corner point $\bx_c$ on $\mathcal{S}$ such that $\Gamma^\pm_{\bx_c,h}\subset \mathcal{S}$, where  $\Gamma^\pm_{\bx_c,h}=\partial \mathcal{K}_{\bx_0}\cap B_h(\bx_c)$ and $\mathcal{C}_{\bx_c,h}=B_h(\bx_c)\cap \mathcal{K}_{\bx_c}=B_h(\bx_c)\cap \Omega_1$. Here, $\mathcal{K}_{\bx_c}$ and $B_h(\bx_c)$ are defined in \eqref{geo:1} and $\Omega_1= {\mathrm {enclose}}(\mathcal{S})$ is given in \eqref{eq:omega1}.
\item[{\rm (2)}]In $\mathbb R^3$, $\mathcal{S}\subset \mathbb{R}^{3}$ is an oriented  Lipschitz surface. Suppose that $\mathcal{S}$ possesses at least one 3D edge corner point $\bx_c=(\bx'_c,x^3_c)^\top\in\mathbb{R}^3$, where $\bx'_c\in \mathbb{R}^2$ is a planner corner point. In other words, for sufficient small positive numbers $h$ and $M$,  we have that $\Gamma^\pm_{\bx'_c,h}\times (-M,M)\subset \mathcal{S}$ and $B'_h(\bx'_c)\times (-M,M)\cap\Omega_1=\mathcal{C}'_{\bx_c,h}\times (-M,M)$, where $\Gamma^\pm_{\bx'_c,h}$ are two edges of a sectorial corner at $\bx'_c$ and $B'_h(\bx'_c)$  is an open disk centered at $\bx'_c$ with  radius $h$, which are  defined in \eqref{eq:ball1}. The opening angle of the sectorial corner at $\bx'_c$ is referred to be the opening angle of the corresponding 3D edge corner.
\item[{\rm (3)}] In $\mathbb R^2$, let $\bff_j:=\bff\big|_{\Gamma^j_{\bx_c,h}}$ and $\bg_j:=\bg\big|_{\Gamma^j_{\bx_c,h}}$ satisfy $\bff_j \in C^{1,\alpha_j}(\Gamma^j_{\bx_c,h})^2$ and  $\bg_j\in C^{\beta_j}(\Gamma^j_{\bx_c,h})^2$ with $\alpha_j,\beta_j$ being in $(0,1)$ and $j=+,-$, where $\bx_c$ and $\Gamma^\pm_{\bx_c,h}$ are just the ones in $(1)$.
\item[{\rm (4)}] In $\mathbb R^3$, let $\bff_j:=\bff\big|_{\Gamma'^j_{\bx'_c,h}\times (-M,M)}$ and $\bg_j=\bg\big|_{\Gamma'^j_{\bx'_c,h}\times (-M,M)}$ fulfill that $\bff_j \in C^{1,\alpha_j}(\Gamma'^j_{\bx'_c,h}\times (-M,M))^3$ and  $\bg_j\in C^{\beta_j}(\Gamma'^j_{\bx'_c,h}\times (-M,M))^3$  with $\alpha_j,\beta_j$ being in $(0,1)$ and $j=+,-$, where $\bx'_c$ and $\Gamma'^\pm_{\bx'_c,h}$ are just the ones in $(2)$. Furthermore, $\bff_j$ and $\bg_j$ are are independent of $x_3$.
\item[{\rm(5)}] Let  $\mathcal{V}_{\mathcal{S}}$ signify the set of 2D corners/3D edge corners of $\mathcal{S}$. In $\mathbb R^3$, we denote $\bg=(\bg^{(1,2)}, g^3)$, where  $\bg^{(1,2)}$ and $g^3$ represent the first two components and the third component of $\bg$, respectively. Either the following assumptions $\mathcal{A}_1$ or $\mathcal{A}_2$ is satisfied,
\begin{align*}
 &\qquad\quad {\it\mbox{ Assumption} \,\,\mathcal{A}_{\rm 1}}: \,\,\mbox{For any }\,\,\bx_c\in \mathcal{V}_{\mathcal{S}}, \quad \bff_-(\bx_c)\neq\bff_+(\bx_c),\\[2mm]
&\qquad\quad  {\it\mbox{ Assumption} \,\,\mathcal{A}_{\rm 2}}: \mbox{If there exists a point}\, \bx_c\in \mathcal{V}_{\mathcal{S}}\,\, \mbox{such that }  \bff_-(\bx_c)=\bff_+(\bx_c),\\[2mm]
&\qquad \qquad\qquad   \mbox{then}\,\, \nabla\bff_+(\bx_c)= \nabla\bff_-(\bx_c)=\bf 0\,\,\mbox{and}\,
 \left\{\begin{array}{ll}
	\bg_+(\bx_c)\neq W_{\bx_c}\bg_-(\bx_c),\qquad\mbox{ if $n=2$},\\[5mm]
\bg^{(1,2)}_+(\bx_c)\neq W_{\bx'_c}\bg^{(1,2)}_-(\bx_c),\mbox{ if $n=3$},\\
\mbox{or}\quad g^3_+(\bx_c)\neq 0,\\
\mbox{or} \quad g^3_-(\bx_c)\neq 0,
\end{array}\right.
\end{align*}
where  
\beq\nonumber 
\qquad\quad  \bg_\pm=(\bg^{(1,2)}_\pm,\, g^3_\pm)^\top,\,
W_{\bx_c}=
		\begin{bmatrix}-\cos\theta_{\bx_c}&-\sin\theta_{\bx_c}\\
    -\sin\theta_{\bx_c} &\cos\theta_{\bx_c}
    \end{bmatrix},\,
W_{\bx'_c}=\begin{bmatrix}-\cos\theta_{\bx'_c}&-\sin\theta_{\bx'_c}\\
    -\sin\theta_{\bx'_c} &\cos\theta_{\bx'_c}
    \end{bmatrix}.
\eeq
Here, $\theta_{\bx_c}$ and $\theta_{\bx'_c}$ denote the opening angle at the 2D corner/3D edge corner  point $\bx_c$ of $\mathcal{S}$.
\end{itemize}
\end{defn}

\begin{rem}
The admissible conditions that $\mathcal{S}$ possesses at least one planar corner/3D edge corner in Definitions \ref{def:Admis2} can be easily fulfilled in generic physical scenarios. For example, $\mathcal{S}$ is a piecewise linear fault curve/surface. In what follows, $(\mathcal{S};\bff,\bg)$ is said to be an admissible dislocation with jump vectors $\bff$ and $\bg$ if it fulfills the conditions in Definitions~\ref{def:Admis2}.
\end{rem}

The uniqueness results in \cite{AspriBerettaMazzucato2022, AspriBerettaMazzucatoHoop2020} for determining $\mathcal S$ only focused on the case that $\mathcal{S}$ is open and there is only the displacement discontinuity on the fault curve/surface. Those methodology developed in \cite{AspriBerettaMazzucato2022, AspriBerettaMazzucatoHoop2020} cannot deal with the case that  $\mathcal{S}$ is closed. In our study, we can handle these two situations and also allow the occurrence of discontinuities in both displacement and traction fields on the dislocation. In Theorem \ref{th:main_loca}, we obtain a local uniqueness result for the inverse dislocation problem~\eqref{eq:IP2} by using a single displacement measurement on $\Sigma_0$ whose proof is postponed in Section \ref{se:proof}.

\begin{thm}\label{th:main_loca}
Let $(\mathcal{S}_1; \bff^1,\bg^1)$ and $(\mathcal{S}_2; \bff^2,\bg^2)$ belong to $\mathcal{T}$. Assume that $\mathrm{supp}(\bg^i)=\mathrm{supp}(\bff^i)=\overline{\mathcal{S}}_i$ and $\bu_i$ is the unique solution to Problem \eqref{eq:elast1} in $H^1_{\Sigma_D,\Sigma_N}(\Omega\backslash\overline{\mathcal{S}_i})$ with respect with to $\bg=\bg^i$, $\bff=\bff^i$ and $\mathcal{S}=\mathcal{S}_i$,  respectively, for $i=1,2$. If $\bu_1\big|_{\Sigma_0}=\bu_2\big|_{\Sigma_0}$, then   $\mathcal{S}_1\Delta\mathcal{S}_2:=(\mathcal{S}_1\backslash\mathcal{S}_2)\cup(\mathcal{S}_2\backslash\mathcal{S}_1)$  cannot contain a planar corner/3D edge corner $\bx_c$.
\end{thm}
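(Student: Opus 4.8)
The plan is to argue by contradiction, following the standard corner-singularity scheme adapted to the transmission setting with both displacement and traction jumps. Suppose $\bx_c$ is a planar corner (or $3$D edge corner) contained in $\mathcal S_1\Delta\mathcal S_2$; without loss of generality $\bx_c\in\overline{\mathcal S}_1$ but $\bx_c\notin\overline{\mathcal S}_2$. Since $\bu_1|_{\Sigma_0}=\bu_2|_{\Sigma_0}$ and both $\bu_i$ satisfy the homogeneous Lam\'e system with the same mixed boundary conditions on $\partial\Omega$, unique continuation for the Lam\'e operator (which holds since $\lambda,\mu$ are constant and the strong convexity condition \eqref{eq:tensor2} is in force) implies $\bu_1=\bu_2$ in the connected component of $\Omega\setminus(\overline{\mathcal S}_1\cup\overline{\mathcal S}_2)$ that contains $\Sigma_0$. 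The first step is therefore to propagate this identity into a small neighborhood $B_h(\bx_c)$: because $\bx_c\notin\overline{\mathcal S}_2$, the function $\bu_2$ is a $C^\infty$ (in fact analytic) solution of $\mathcal L\bu_2=\bm 0$ across $\bx_c$, so on one side of $\mathcal S_1$ near $\bx_c$ we may set $\bw:=\bu_1-\bu_2$ and record that $\bw$ is an analytic Lam\'e solution that vanishes on a cone/sector adjacent to $\mathcal C_{\bx_c,h}$, while on the enclosed side $\mathcal C_{\bx_c,h}=B_h(\bx_c)\cap\Omega_1$ one has $\mathcal L\bu_1=\bm 0$ with the jump relations $[\bu_1]_{\Gamma^\pm_{\bx_c,h}}=\bff^1$, $[\mathcal T_\bnu\bu_1]_{\Gamma^\pm_{\bx_c,h}}=\bg^1$ carrying the admissible H\"older data from Definition~\ref{def:Admis2}.

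The heart of the argument, which I expect to be essentially a citation to the local results derived in Section~\ref{se:pre}, is a microlocal/CGO analysis at the corner. I would invoke the Complex Geometric Optics solutions $\bu_0$ for the constant-coefficient Lam\'e system of the form $\bu_0(\bx)=\exp\!\bigl(\rho(\bx-\bx_c)\cdot(\bd+\mathrm i\bd^\perp)\bigr)\,\bm a$ with $\rho\to\infty$, localized so that the exponential decays in $\mathcal C_{\bx_c,h}$, and test the integral identity obtained by Green's (second Betti) formula against $\bw$ and $\bu_1$ over the truncated sector $\mathcal C_{\bx_c,h}$. The boundary terms split into (i) integrals over $\Gamma^\pm_{\bx_c,h}$ producing, after expanding $\bff^1$ and $\bg^1$ in their Taylor/H\"older expansions at $\bx_c$, the contributions $\bff^1_\pm(\bx_c)$, $\nabla\bff^1_\pm(\bx_c)$ and $\bg^1_\pm(\bx_c)$ weighted by explicit powers of $\rho$ and by the direction-dependent matrices $W_{\bx_c}$ coming from the traction operator on the two faces; (ii) integrals over the arc $\Lambda^{\bx_c}_h$ and over the far face, which are exponentially small in $\rho$ by the decay of the CGO solution together with a Rellich-type interior estimate on $\bw$; and (iii) the fact that the corresponding integral against $\bu_2$ vanishes (no jump). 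Letting $\rho\to\infty$ and matching the dominant powers of $\rho$ forces, in succession, $\bff^1_+(\bx_c)=\bff^1_-(\bx_c)$, then $\nabla\bff^1_+(\bx_c)=\nabla\bff^1_-(\bx_c)=\bm 0$, and finally the relation among $\bg^1_\pm(\bx_c)$ (namely $\bg^1_+(\bx_c)=W_{\bx_c}\bg^1_-(\bx_c)$ and, in $3$D, $g^{1,3}_\pm(\bx_c)=0$). This chain of vanishing conclusions is precisely the negation of both admissible alternatives $\mathcal A_1$ and $\mathcal A_2$ in Definition~\ref{def:Admis2}(5).

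To finish, I would observe that $(\mathcal S_1;\bff^1,\bg^1)\in\mathcal T$ requires either $\mathcal A_1$ or $\mathcal A_2$ to hold at $\bx_c\in\mathcal V_{\mathcal S_1}$; in the first case $\bff^1_-(\bx_c)\neq\bff^1_+(\bx_c)$ contradicts the first matching conclusion, and in the second case either $\nabla\bff^1_\pm(\bx_c)=\bm 0$ fails or the $\bg^1$-relation fails, each of which contradicts one of the later conclusions. Hence no such $\bx_c$ can exist in $\mathcal S_1\Delta\mathcal S_2$. The main obstacle, and the place where the bookkeeping is genuinely delicate, is step two: one must set up the CGO test functions with exactly the right complex phase and amplitude so that the sector integral decays while the two face integrals survive, and one must carefully track the hierarchy of $\rho$-powers so that the $\bff$-terms, the $\nabla\bff$-terms and the $\bg$-terms can be peeled off one at a time; this is where the H\"older regularity in Definition~\ref{def:Admis2}(3)--(4) and, in $3$D, the $x_3$-independence assumption are used to control the remainders. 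All of this delicate local analysis is exactly what Section~\ref{se:pre} is designed to supply, so at the level of Theorem~\ref{th:main_loca} the proof reduces to assembling those local lemmas with unique continuation.
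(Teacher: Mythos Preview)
Your overall strategy---contradiction, unique continuation from $\Sigma_0$, reduction to a one-sided transmission problem near the corner, and invocation of the local CGO results of Section~\ref{se:pre}---matches the paper's proof. There is, however, a genuine logical slip in how you sequence the conclusions of the local analysis.

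You write that the CGO testing ``forces, in succession, $\bff^1_+(\bx_c)=\bff^1_-(\bx_c)$, then $\nabla\bff^1_+(\bx_c)=\nabla\bff^1_-(\bx_c)=\bm 0$, and finally the relation among $\bg^1_\pm(\bx_c)$''. The middle step is \emph{not} a conclusion of Proposition~\ref{prop:trans2} (or Proposition~\ref{prop:trans4}); it is a hypothesis. In the integral identity \eqref{eq:iden1} the gradient terms $R_8,R_9$ carry the \emph{same} $s^{-2}$ weight as the $\bg$-terms, so one cannot peel off $\nabla\bff_\pm(\bf0)=\bm0$ by power counting. Proposition~\ref{prop:trans2} states only: (i) $\bff_+(\bf0)=\bff_-(\bf0)$ unconditionally; (ii) \emph{if} $\nabla\bff_\pm(\bf0)=\bm0$, then $\bg_+(\bf0)=W\bg_-(\bf0)$.

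The correct contradiction runs as follows. From (i) you obtain $\bff^1_+(\bx_c)=\bff^1_-(\bx_c)$. If $\mathcal A_1$ holds for $(\mathcal S_1;\bff^1,\bg^1)$, this is already a contradiction. If instead $\mathcal A_2$ holds, then the implication in $\mathcal A_2$ is triggered, and it is the \emph{assumption} $\mathcal A_2$ that supplies $\nabla\bff^1_\pm(\bx_c)=\bm0$; you then feed this into part (ii) of the proposition to deduce $\bg^1_+(\bx_c)=W_{\bx_c}\bg^1_-(\bx_c)$ (and $g^{1,3}_\pm(\bx_c)=0$ in $3$D), which contradicts the $\bg$-clause of $\mathcal A_2$. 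Your final paragraph (``either $\nabla\bff^1_\pm(\bx_c)=\bm 0$ fails or the $\bg^1$-relation fails'') has this backwards: under $\mathcal A_2$ the vanishing of $\nabla\bff^1_\pm$ \emph{holds} by hypothesis and is what enables, rather than contradicts, the next step.

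A minor remark: the CGO solutions actually used are of the form $\exp(-s\sqrt{z})$ with $z=x_1+\mathrm i x_2$, not the linear-phase exponentials $\exp(\rho\bx\cdot(\bd+\mathrm i\bd^\perp))$ you describe; the square root is what produces the particular hierarchy of $s$-powers in \eqref{eq:iden1}. Since you defer the details to Section~\ref{se:pre} this is not fatal, but be aware your heuristic power-counting would not match.
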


In Theorem \ref{th:main1} and \ref{th:main2} we derive the global uniqueness results for the inverse dislocation problem~\eqref{eq:IP2} on the determination of the dislocation surface $\mathcal{S}$ and $\bff$, $\bg$ from a given single displacement measurement on $\Sigma_0$. The proofs of Theorems \ref{th:main1} and \ref{th:main2} are postponed in Section \ref{se:proof}.

We first consider the case that $\mathcal{S}_1$ and $\mathcal{S}_2$ are closed.
\begin{thm}\label{th:main1}
Let $(\mathcal{S}_1; \bff^1,\bg^1)$ and $(\mathcal{S}_2; \bff^2,\bg^2)$ belong to $\mathcal{T}$, where $\mathcal{S}_1$ and $\mathcal{S}_2$ are closed. Assume that $\Omega_1= {\mathrm {enclose}}(\mathcal{S}_1)$ and $\Omega_2= {\mathrm {enclose}}(\mathcal{S}_2)$ are two convex polygons in $\mathbb R^2$ or two convex polyhedrons in $\mathbb R^3$, where $\mathcal{S}_i=\bigcup_{k=1}^{m_i} \Pi_{i,k}$ $(i=1,2)$. Here $\Pi_{i,k}$ is the $k$-th edge or surface of the polygon or polyhedron $\Omega_i$.    Let $\bu_i$ be the unique solution to Problem \eqref{eq:elast1} in $H^1_{\Sigma_D,\Sigma_N}(\Omega\backslash\overline{\mathcal{S}})$ with respect to $\bg^i$ and $\bff^i$, respectively, for $i=1,2$. If $\bu_1\big|_{\Sigma_0}=\bu_2\big|_{\Sigma_0}$, then $\mathcal{S}_1=\mathcal{S}_2$, namely $m_1=m_2:=m,\  \Pi_{1,k}=\Pi_{2,k}:=\Pi_{k}\ (k=1,\ldots,m)$.
Furthermore, assume that $\bff^i$ and $\bg^i$ are piecewise-constant functions on $ \Pi_k$. Then we have
\beq\label{eq:unque1}
   (\bff^1-\bff^2)\big|_{\Pi_{k+1}}=(\bff^1-\bff^2)\big|_{\Pi_{k}},\,\, (\bff^1-\bff^2)\big|_{\Pi_1}=(\bff^1-\bff^2)\big|_{\Pi_m}, \quad k=1,\cdots,m-1.
   \eeq
   
Furthermore, if $\nabla \bff^1(\bx_k)=\nabla \bff^2(\bx_k)$ at the 2D corners/3D edge corners $\bx_k$, $k=1,2,\cdots,m$, then we have 
    \beq\label{eq:unque1'}
      (\bg^1-\bg^2)\big|_{\Pi_{k+1}}=W_{\bx_k}(\bg^1-\bg^2)\big|_{\Pi_{k}}, \quad(\bg^1-\bg^2)\big|_{\Pi_1}=W_{\bx_m}(\bg^1-\bg^2)\big|_{\Pi_m},
    \eeq
   where $W_{\bx_k}=\begin{bmatrix}-\cos\theta_{\bx_k}&-\sin\theta_{\bx_k}\\
    -\sin\theta_{\bx_k} &\cos\theta_{\bx_k}
    \end{bmatrix}$ is similarly defined as $W_{\bx_c}$ in Definition~\ref{def:Admis2} and $\theta_{\bx_k}$ corresponds to the opening angle at $\bx_k$.
\end{thm}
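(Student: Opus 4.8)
\textbf{Proof proposal for Theorem~\ref{th:main1}.}

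The plan is to reduce the global statement to repeated applications of the local analysis at corner points and to a unique-continuation argument on the elastic field. The overall strategy proceeds in four stages.

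\emph{Stage 1: Recovery of the geometry $\mathcal{S}_1=\mathcal{S}_2$.} First I would argue by contradiction: suppose $\mathcal{S}_1\neq\mathcal{S}_2$. Since $\Omega_1$ and $\Omega_2$ are convex polygons (respectively polyhedra) and hence differ, the symmetric difference $\mathcal{S}_1\Delta\mathcal{S}_2$ must contain at least one vertex (2D corner / 3D edge corner) $\bx_c$ of one of the two polygons that does not belong to the other dislocation curve/surface. Indeed, if $\Omega_1\neq\Omega_2$ then (up to interchanging indices) one of them, say $\Omega_1$, has a vertex $\bx_c$ lying in the open complement of $\overline{\Omega_2}$ or in the interior of $\Omega_2$; in either situation $\bx_c\in\mathcal{S}_1\setminus\mathcal{S}_2$ is a planar corner / 3D edge corner of $\mathcal{S}_1\Delta\mathcal{S}_2$. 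Since $\bu_1|_{\Sigma_0}=\bu_2|_{\Sigma_0}$, this directly contradicts the local uniqueness Theorem~\ref{th:main_loca}. Hence $\Omega_1=\Omega_2$ as sets, which forces $\mathcal{S}_1=\mathcal{S}_2$, i.e. $m_1=m_2=:m$ and, after relabeling, $\Pi_{1,k}=\Pi_{2,k}=:\Pi_k$ for $k=1,\dots,m$. One subtlety to address carefully here is the case distinction in which $\bx_c$ lies on $\partial\Omega_2$ but not at a vertex of $\Omega_2$; in that situation $\bx_c$ is still a genuine corner of $\mathcal{S}_1\Delta\mathcal{S}_2$ because locally $\mathcal{S}_1$ has a corner there while $\mathcal{S}_2$ is flat, so the admissibility hypotheses of $\mathcal{T}$ still apply and Theorem~\ref{th:main_loca} is still in force.

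\emph{Stage 2: Transmission problem for the difference and unique continuation.} Having established $\mathcal{S}_1=\mathcal{S}_2=:\mathcal{S}$, set $\bu:=\bu_1-\bu_2$ in $\Omega\setminus\overline{\mathcal{S}}$. Then $\mathcal{L}\bu=\bf 0$ in $\Omega\setminus\overline{\mathcal{S}}$, $\mathcal{T}_{\bnu}\bu|_{\Sigma_N}=\bf 0$, $\bu|_{\Sigma_D}=\bf 0$, and across $\mathcal{S}$ one has $[\bu]_{\mathcal{S}}=\bff^1-\bff^2$ and $[\mathcal{T}_{\bnu}\bu]_{\mathcal{S}}=\bg^1-\bg^2$. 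On $\Sigma_0$ we have $\bu=\bf 0$ and $\mathcal{T}_{\bnu}\bu=\bf 0$ (the latter because both solutions have vanishing traction on $\Sigma_N\supset\Sigma_0$). By Holmgren's theorem / unique continuation for the Lamé system with constant coefficients, $\bu\equiv\bf 0$ in the connected component of $(\Omega\setminus\overline{\mathcal S})\setminus\Sigma_0$ that touches $\Sigma_0$, and then by propagating the continuation through $\Omega\setminus\overline{\Omega_1}$ — which is connected and disjoint from $\mathcal{S}$ — we obtain $\bu\equiv\bf 0$ in $\Omega\setminus\overline{\Omega_1}$, i.e. $\bu_1=\bu_2$ in the exterior component. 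In particular $\bu_2|_{\mathcal S}^-=\bu_1|_{\mathcal S}^-$ and $\mathcal{T}_{\bnu}\bu_2|_{\mathcal S}^-=\mathcal{T}_{\bnu}\bu_1|_{\mathcal S}^-$, where ``$-$'' denotes the exterior trace on $\mathcal S$.

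\emph{Stage 3: Propagating equality of jumps around consecutive edges.} Now fix a vertex $\bx_k$ shared by the two consecutive edges $\Pi_k$ and $\Pi_{k+1}$. Applying the local characterization results from Section~\ref{se:pre} (which, under the admissibility class $\mathcal{T}$ and the piecewise-constant assumption on $\bff^i,\bg^i$, pin down the jump vectors at a corner in terms of the local elastic field generated by the other configuration via CGO solutions) to the field $\bu$ at $\bx_k$: since the exterior Cauchy data of $\bu_1$ and $\bu_2$ agree near $\bx_k$ on \emph{both} adjacent edges, the local analysis yields a relation between $(\bff^1-\bff^2)|_{\Pi_k}$ and $(\bff^1-\bff^2)|_{\Pi_{k+1}}$. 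Concretely, the vanishing of the leading CGO-tested expansion coefficient at $\bx_k$ forces the slip-difference to take the \emph{same} constant value on the two edges meeting at $\bx_k$ — this is exactly \eqref{eq:unque1}. Running $k$ around the cycle $1,2,\dots,m$ (and using the closure of the polygon to link $\Pi_m$ back to $\Pi_1$) gives that $\bff^1-\bff^2$ is a single global constant vector on all of $\mathcal{S}$. Next, under the additional hypothesis $\nabla\bff^1(\bx_k)=\nabla\bff^2(\bx_k)$, the subleading term in the local expansion at $\bx_k$ becomes accessible, and the corresponding vanishing condition produces the rotation-type relation $(\bg^1-\bg^2)|_{\Pi_{k+1}}=W_{\bx_k}(\bg^1-\bg^2)|_{\Pi_k}$, where $W_{\bx_k}$ is the reflection/rotation matrix built from the opening angle $\theta_{\bx_k}$ exactly as in Definition~\ref{def:Admis2}; cycling around all vertices yields \eqref{eq:unque1'}.

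\emph{Stage 4: Assembling the conclusion.} Collecting Stages 1–3 gives the full statement: $\mathcal S_1=\mathcal S_2$, the slip-difference relations \eqref{eq:unque1}, and, under the extra gradient-matching hypothesis, the traction-jump relations \eqref{eq:unque1'}. The main obstacle is Stage~3, specifically extracting from the local CGO-based singularity analysis of Section~\ref{se:pre} the precise algebraic relation linking the jump data on two edges that share a vertex; this requires that the leading-order (and, for the traction part, the next-order) coefficients in the expansion of $\bu$ near $\bx_k$ be shown to vanish, which in turn relies essentially on the admissibility/Hölder regularity hypotheses and, in 3D, on the $x_3$-independence assumption to reduce the edge-corner analysis to the planar case. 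Care must also be taken that the matrix $W_{\bx_k}$ emerging from the two-edge compatibility is consistent with the one in Definition~\ref{def:Admis2} — i.e. that the orientation conventions for $\mathcal{S}^{\pm}$ and the labeling of $\Gamma^{\pm}$ are tracked correctly around the polygon — which is a bookkeeping point but an important one for the sign structure of \eqref{eq:unque1'}.
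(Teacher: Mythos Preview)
Your proposal is correct and follows essentially the same route as the paper: establish $\mathcal{S}_1=\mathcal{S}_2$ by contradiction via Theorem~\ref{th:main_loca}, use unique continuation to kill $\bu_1-\bu_2$ in the exterior component, and then apply the local corner result (Proposition~\ref{prop:trans2} in 2D, Proposition~\ref{prop:trans4} in 3D) at each vertex $\bx_k$ with $\bv=\bw^-$ and the second function taken to be zero, so that the jump data become $\bff^2_\pm-\bff^1_\pm$ and $\bg^2_\pm-\bg^1_\pm$, yielding \eqref{eq:unque1} and, under the gradient-matching hypothesis, \eqref{eq:unque1'}. The paper's proof is terser but structurally identical; your extra discussion of the degenerate geometric case and the orientation bookkeeping for $W_{\bx_k}$ is sound caution but not needed for the argument to go through.
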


In Theorem \ref{th:main2}, we investigate the unique determination of a piecewise curve or a piecewise surface $\mathcal{S}$, where $\mathcal{S}$ is open. Before that, we introduce the corresponding definition.
\begin{defn}\label{def:poly}
	Suppose that $\mathcal{S}\subset \mathbb R^n$ $(n=2,3)$ is open. Under rigid motion, we can have a suitable and fixed but arbitrary coordinate system such that $\mathcal{S}\subset \mathbb R^2$ is the graph of a function $f(x_1)$, where $x_1\in [a,b]$. If $[a,b]=\cup_{i=1}^\ell [a_i,a_{i+1}]$ with $l\geq 3$, $a_i<a_{i+1}$, $a_1=a$ and $a_\ell=b$, which is piecewise linear polynomial on each piece $[a_i,a_{i+1}]$, then $\mathcal{S}\subset \mathbb R^2$ is referred as a linear piecewise  curve. Similarly, under rigid motion, we can have a suitable and fixed but arbitrary coordinate system such that $\mathcal{S}\subset \mathbb R^3$ is the graph of a function $f(x_1,x_3)$, where $(x_1,x_3)\in [a_1,a_2] \times [b_1,b_2]$. If $f(x_1,c)$ with $c\in [b_1,b_2]$ being fixed satisfies $f(x_1,c)=g(x_1)$, where the graph of $g(x_1)$ is a piecewise curve like the ones in $\mathbb R^2$, then $\mathcal{S}\subset \mathbb R^3$ is referred as a piecewise surface. See Fig.~\ref{fig:3} for a schematic illustration.
\end{defn}
\begin{figure}[ht]
\centering
\subfigure{\includegraphics[width=0.50\textwidth]{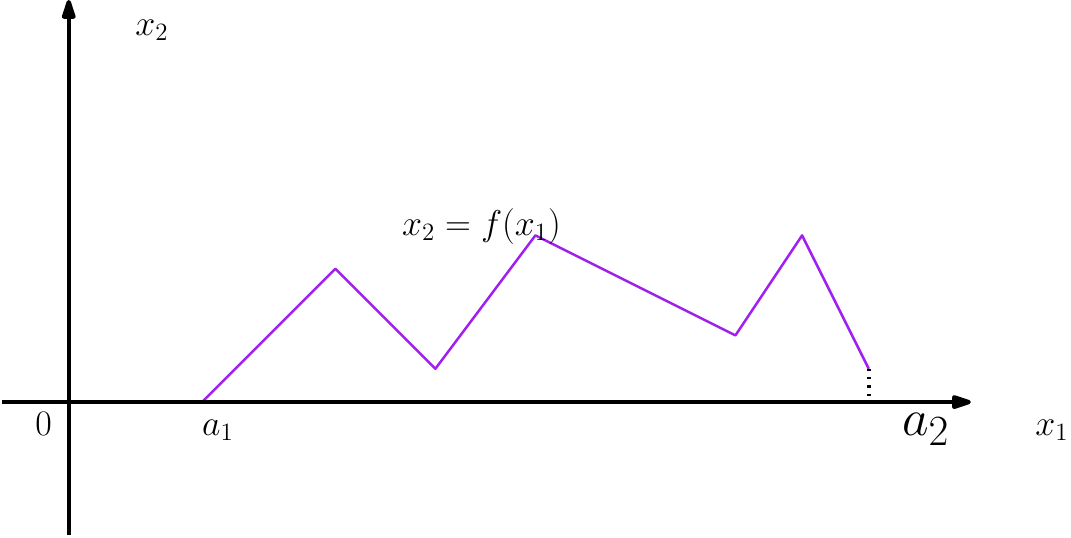}}\qquad
\subfigure{\includegraphics[width=0.40\textwidth]{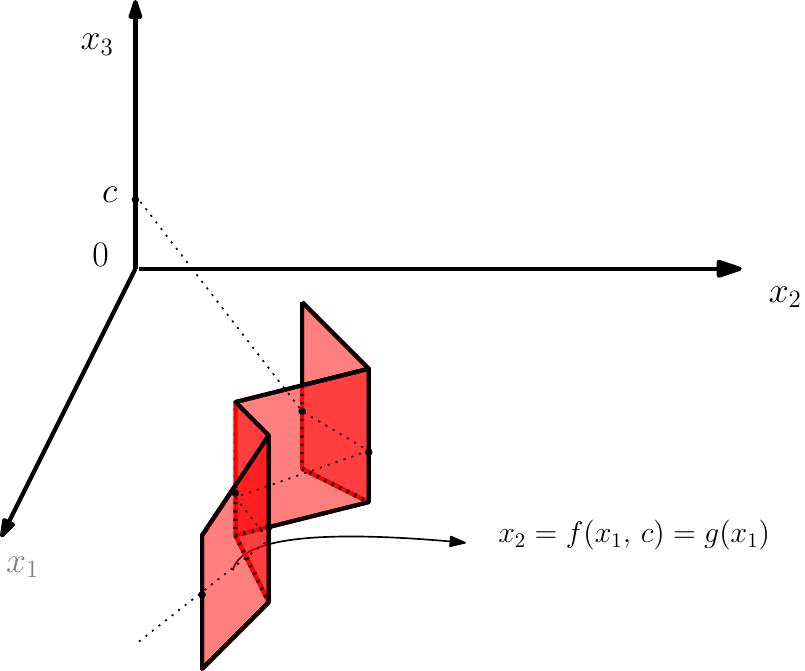}}\\
\caption{Schematic illustration of a piecewise curve or a piecewise surface.  \label{fig:3}}
\end{figure}

\begin{rem}
Indeed, Definition \ref{def:poly} provides an assumption regarding dislocations $\mathcal{S}$ as piecewise linear polynomial curves/piecewise surfaces, a specific class of graphs with 2D corners/3D edge corners. This assumption coincides with that made by Aspri et al. in \cite{AspriBerettaMazzucato2022, AspriBerettaMazzucato20222}, where $\mathcal{S}$ is assumed to be a Lipschitz graph with respect to a fixed but arbitrary coordinate system. However, we restrict ourselves to this special class of graphs. In particular, the definition of piecewise surfaces shall be combined with the dimension reduction operator $\mathcal{P}$ in Subsection \ref{subse:case2} to study the relevant continuities about jump functions at the 3D edge corners.
\end{rem}

\begin{thm}\label{th:main2}
Let $(\mathcal{S}_1; \bff^1,\bg^1)$ and $(\mathcal{S}_2; \bff^2,\bg^2)$ belong to $\mathcal{T}$, where $\mathcal{S}_1$ and $\mathcal{S}_2$ are open. Suppose that the curves/surfaces $\mathcal{S}_1$ and $\mathcal{S}_2$ are piecewise curves in $\mathbb R^2$ or piecewise surfaces in $\mathbb R^3$ (as defined in  Definition \ref{def:poly}). Let $\bu_i$ be the unique solution to Problem \eqref{eq:elast1} in $H^1_{\Sigma_D,\Sigma_N}(\Omega\backslash\overline{\mathcal{S}_i})$ with respect to $\bg^i$, $\bff^i$ and $\mathcal{S}_i$, where $\bff^i\in H^{\frac{1}{2}}_{00}(\mathcal{S}_i)$ and $\bg^i\in H^{-\frac{1}{2}}_0(\mathcal{S}_i)$ with $\mathrm{supp}(\bg^i)=\mathrm{supp}(\bff^i)=\overline{\mathcal{S}}_i$, $i=1,2$. If
\begin{align}\label{eq:thm33}
	\bu_1\big|_{\Sigma_0}=\bu_2\big|_{\Sigma_0},
\end{align}
 then \beq\nonumber
\mathcal{S}_1=\mathcal{S}_2,\quad \bff^1=\bff^2\,\,\,\mbox{and}\,\,\,\bg^1=\bg^2.
\eeq
\end{thm}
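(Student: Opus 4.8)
\textbf{Proof proposal for Theorem~\ref{th:main2}.}

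The plan is to reduce the open case to the same local machinery that underlies Theorem~\ref{th:main_loca} and Theorem~\ref{th:main1}, and then to bootstrap from the local corner information to the global conclusion using the piecewise-linear structure in Definition~\ref{def:poly} together with unique continuation for the Lam\'e system. First I would argue by contradiction: suppose $\mathcal{S}_1\neq\mathcal{S}_2$. Because $\bu_1=\bu_2$ on the open set $\Sigma_0\subset\Sigma_N$ and both displacements satisfy $\mathcal{L}\bu_i=\mathbf 0$ with the homogeneous mixed boundary conditions, the difference $\bu:=\bu_1-\bu_2$ is $\mathcal{L}$-harmonic in the unbounded-in-the-right-sense component $G$ of $\Omega\setminus(\overline{\mathcal{S}_1}\cup\overline{\mathcal{S}_2})$ that abuts $\Sigma_0$, with vanishing Cauchy data on $\Sigma_0$; by Holmgren/unique continuation for the elliptic Lam\'e operator (constant coefficients, so the strong unique continuation principle applies, cf. the references used for the local analysis) we get $\bu_1=\bu_2$ throughout $G$. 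Hence if, say, a planar corner point $\bx_c$ of $\mathcal{S}_1$ lies in the interior of $G$ (i.e., is not a corner of $\mathcal{S}_2$, which is exactly the situation excluded by Theorem~\ref{th:main_loca}), then near $\bx_c$ the field $\bu_1=\bu_2$ is smooth across $\mathcal{S}_1$, forcing the jumps $\bff^1,\bg^1$ to vanish near $\bx_c$; this contradicts $\mathrm{supp}(\bff^1)=\mathrm{supp}(\bg^1)=\overline{\mathcal{S}}_1$. The delicate point, handled precisely by Theorem~\ref{th:main_loca}, is that $\mathcal{S}_1\Delta\mathcal{S}_2$ cannot contain such a corner, so this step is really an appeal to that theorem once $G$ is correctly identified.

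Next I would use the piecewise-linear/graph structure to upgrade ``no corner in $\mathcal{S}_1\Delta\mathcal{S}_2$'' to ``$\mathcal{S}_1=\mathcal{S}_2$.'' Since each $\mathcal{S}_i$ is, up to a fixed rigid motion, the graph of a piecewise-linear function over an interval (in $2$D) or a piecewise-linear-in-$x_1$ surface independent of $x_3$ in the relevant reduction (in $3$D), each $\mathcal{S}_i$ is determined by finitely many breakpoints, and every breakpoint is a $2$D corner (resp. $3$D edge corner). If $\mathcal{S}_1\neq\mathcal{S}_2$, I would look at the connected component of $\Omega\setminus(\overline{\mathcal{S}_1}\cup\overline{\mathcal{S}_2})$ adjacent to $\Sigma_0$ on which $\bu_1=\bu_2$, and trace its boundary: at a point where $\mathcal{S}_1$ and $\mathcal{S}_2$ first separate, the symmetric difference must produce a corner of $\mathcal{S}_1$ or of $\mathcal{S}_2$ sitting in the closure of that component but not shared by the other surface — precisely the forbidden configuration. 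A clean way to organize this is: among all breakpoints of $\mathcal{S}_1$ and $\mathcal{S}_2$, take one that is ``exposed'' to $G$ (reachable from $\Sigma_0$ without crossing the other curve) and lies in $\mathcal{S}_1\Delta\mathcal{S}_2$; its existence follows from $\mathcal{S}_1\neq\mathcal{S}_2$ plus the graph hypothesis, and its existence is contradicted by Theorem~\ref{th:main_loca}. This yields $\mathcal{S}_1=\mathcal{S}_2=:\mathcal{S}$, with a common set of linear pieces $\Pi_k$ and common corner set $\mathcal{V}_{\mathcal{S}}$.

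With $\mathcal{S}$ now fixed, the difference $\bu=\bu_1-\bu_2$ solves $\mathcal{L}\bu=\mathbf 0$ in $\Omega\setminus\overline{\mathcal{S}}$ with homogeneous mixed boundary data, vanishing Cauchy data on $\Sigma_0$, and transmission jumps $[\bu]_{\mathcal{S}}=\bff^1-\bff^2=:\widehat\bff$, $[\mathcal{T}_{\bnu}\bu]_{\mathcal{S}}=\bg^1-\bg^2=:\widehat\bg$. Unique continuation from $\Sigma_0$ gives $\bu\equiv\mathbf 0$ in the component of $\Omega\setminus\overline{\mathcal{S}}$ touching $\Sigma_0$, call it $\Omega^c_1$ (the exterior piece), so in particular $\bu|_{\mathcal{S}}^+=\mathbf 0$ and $\mathcal{T}_{\bnu}\bu|_{\mathcal{S}}^+=\mathbf 0$. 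Then on the interior piece $\Omega_1=\mathrm{enclose}(\mathcal{S})$, $\bu$ solves the Lam\'e system with Cauchy data $\bu|_{\mathcal{S}}^-=-\widehat\bff$, $\mathcal{T}_{\bnu}\bu|_{\mathcal{S}}^-=-\widehat\bg$ on $\partial\Omega_1=\Gamma$. Now I invoke the local corner results of Section~\ref{se:pre} (the CGO-based analysis that proves the jump relations \eqref{eq:unque1}–\eqref{eq:unque1'} in the closed case): at each corner $\bx_k\in\mathcal{V}_{\mathcal{S}}$, the vanishing of $\bu$ on the $\mathcal{S}^+$ side, combined with admissibility (Hölder regularity of $\bff,\bg$ near the corner and, in $3$D, $x_3$-independence via the dimension-reduction operator $\mathcal{P}$), forces $\widehat\bff(\bx_k)=\mathbf 0$ and, by the same propagation-across-edges argument as in \eqref{eq:unque1}, $\widehat\bff$ is locally constant along $\mathcal{S}$ and hence $\widehat\bff\equiv$ const on the connected graph $\mathcal{S}$; but $\bff^i\in H^{1/2}_{00}(\mathcal{S}_i)$ vanishes (in the trace sense) at $\partial\mathcal{S}$, so that constant is $\mathbf 0$, giving $\bff^1=\bff^2$. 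With $\widehat\bff\equiv\mathbf 0$ (so in particular $\nabla\widehat\bff(\bx_k)=\mathbf 0$), the second corner relation yields $\widehat\bg(\bx_k)=W_{\bx_k}\widehat\bg(\bx_k)$-type constraints propagating \eqref{eq:unque1'} along the pieces, and the endpoint condition $\bg^i\in H^{-1/2}_0(\mathcal{S}_i)$ at $\partial\mathcal{S}$ pins the propagated value to $\mathbf 0$, so $\bg^1=\bg^2$; finally $\widehat\bff=\widehat\bg=\mathbf 0$ and $\bu|^+=\mathbf 0$ give $\bu\equiv\mathbf 0$ on $\Omega_1$ as well, consistent and complete.

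The main obstacle is the second paragraph: converting the purely local statement of Theorem~\ref{th:main_loca} (``$\mathcal{S}_1\Delta\mathcal{S}_2$ has no exposed corner'') into the global identity $\mathcal{S}_1=\mathcal{S}_2$. This requires a careful topological/combinatorial argument that the first point of separation of two piecewise-linear graphs, viewed from the side of $\Sigma_0$, necessarily exhibits a corner of one graph lying in the closure of the $\Sigma_0$-component and absent from the other — one must rule out ``tangential'' separation where the two graphs peel apart smoothly without either creating a corner there, which is where the piecewise-linearity in Definition~\ref{def:poly} is essential (two distinct piecewise-linear graphs that agree on an interval and then differ must do so at a vertex of at least one of them). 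I would also need to be careful that the relevant connected component of $\Omega\setminus(\overline{\mathcal{S}_1}\cup\overline{\mathcal{S}_2})$ is nonempty and genuinely adjacent to $\Sigma_0$, and that unique continuation applies across it; for constant Lam\'e parameters this is standard, but the bookkeeping about which side of each $\mathcal{S}_i$ one is on (the orientation and the $\mathcal{S}^\pm$ convention) must be tracked consistently so that the jump relations from Section~\ref{se:pre} are applied with the correct signs.
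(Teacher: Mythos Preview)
Your first two paragraphs --- establishing $\mathcal{S}_1=\mathcal{S}_2$ by contradiction via Theorem~\ref{th:main_loca} and the piecewise-linear graph structure of Definition~\ref{def:poly} --- match the paper's route exactly (the paper states this step rather tersely, so your more careful topological discussion is welcome).

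The divergence, and a genuine gap, is in your third paragraph. Once $\mathcal{S}_1=\mathcal{S}_2=:\mathcal{S}$, the paper does \emph{not} invoke the corner machinery of Section~\ref{se:pre} at all. The key observation you miss is that when $\mathcal{S}$ is an \emph{open} curve or surface, $\Omega\setminus\overline{\mathcal{S}}$ is \emph{connected} --- you can go around the endpoints of $\mathcal{S}$ (or around $\partial\mathcal{S}$ in three dimensions). Hence unique continuation from $\Sigma_0$ gives $\bw=\bu_1-\bu_2\equiv\mathbf 0$ on \emph{all} of $\Omega\setminus\overline{\mathcal{S}}$, not just on one side. Then $[\bw]_{\mathcal{S}}=\mathbf 0$ and $[\mathcal{T}_{\bnu}\bw]_{\mathcal{S}}=\mathbf 0$ immediately yield $\bff^1=\bff^2$ and $\bg^1=\bg^2$. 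Your artificial split into an ``exterior'' $\Omega_1^c$ and ``interior'' $\Omega_1$ is only relevant after one \emph{closes up} $\mathcal{S}$ to $\Gamma$, and that closure is not a genuine obstruction to continuation.

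Your alternative route also has real problems as written. Proposition~\ref{prop:trans2} yields only $\widehat\bff_+(\bx_k)=\widehat\bff_-(\bx_k)$ at a corner, not $\widehat\bff(\bx_k)=\mathbf 0$; and the ``propagation'' identity \eqref{eq:unque1} from Theorem~\ref{th:main1} is proved there only under the additional hypothesis that the jumps are piecewise constant, which is not assumed in Theorem~\ref{th:main2}. So your argument that $\widehat\bff$ is globally constant, and then zero by the $H^{1/2}_{00}$ endpoint condition, does not go through without that extra assumption. The same issue recurs for $\widehat\bg$. Replacing the whole paragraph by the connectedness observation above closes the argument cleanly.
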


\section{Local results of slips $\bff$ and $\bg$ at the corners on $\mathcal{S}$}\label{se:pre}
In this section, we shall derive several auxiliary propositions that describe the local results of jump vectors $\bff$ and $\bg$ at the corner points of $\mathcal{S}$. These auxiliary results play a key role in establishing our main results in Theorems \ref{th:main_loca}, \ref{th:main1}, and \ref{th:main2}. We shall introduce two kinds of the so-called CGO (complex geometrical optics) solutions that satisfy different Lam\'e /acoustic equations to derive these propositions. The CGO solutions will be later used as test functions to obtain these significant properties of the jump functions of the displacement and the traction field at a 2D corner/3D edge corner point, namely the results in \eqref{eq:local2} and \eqref{eq:vansh2}.

\subsection{Local results for 2D case}
This subsection analyzes the local behaviors of jump vectors $\bff$ and $\bg$ around a planar corner. Next, we introduce the first kind of CGO solution $\bu_0$  introduced in \cite{BlastenLin2019}, which is given  by
\begin{equation}\label{eq:lame3}
\bu_0( \bx )=
\begin{pmatrix}
    \exp(-s\sqrt{z})\\
    {\rm i}\exp(-s\sqrt{z})
    \end{pmatrix}:=\begin{pmatrix}
    u^0_1( \bx)\\
    u^0_2( \bx)
\end{pmatrix}  \quad\mbox{in}\quad \Omega,\quad \bx=(x_1,x_2)^\top,
\end{equation}
 where $z=x_{1}+{\rm i}\,x_{2}$, $s\in \mathbb{R}_{+}$ and $\Omega\cap(\mathbb{R}_{-}\cup\{ 0
\})=\emptyset$. Here the complex square root of $z$ is defined as
\begin{equation}\notag 
\sqrt{z}=\sqrt{|z|}\left(\cos\frac{\theta}{2}+{\rm i}\sin\frac{\theta}{2}\right ),
\end{equation}
where $-\pi<\theta<\pi$ is the argument of $z$. Furthermore, it yields that  $\mathcal{L} \,\bu_0={\bf 0}$ in $\Omega$.

Some significant properties and regularity results of the  CGO solution given in \eqref{eq:lame3} need to be reviewed, which is beneficial for the subsequent analysis.

\begin{lem} \cite[Proposition 3.1]{BlastenLin2019} \label{lem:cgo1}
Let $\bu_0$ be given as above.  Then we have the following properties
\beq\nonumber
\int_\mathcal{K} u^0_1(\bx){\rm d}\bx=6{\rm i}(e^{-2\theta_M{\rm i}}-e^{-2\theta_m \mathrm i})s^{-4}
\eeq
and
\beq\label{eq:lame6}
\int_{\mathcal{K}}|u^0_{j}(\mathbf{x})||\mathbf{x}|^{\alpha}{\rm d}\mathbf{x}\leq\frac{2(\theta_M-\theta_m)\Gamma(2\alpha+4)}{\delta_{\mathcal{K}}^{2\alpha+4}}s^{-2\alpha-4},\quad j=1,2,
\eeq
where $\mathcal{K}$ is defined in Section~\ref{sec:1}, $\alpha,h>0$, $\delta_{\mathcal{K}}=\min\limits_{\theta_m<\theta<\theta_M} \cos \frac{\theta}{2}$ is a positive constant.
\end{lem}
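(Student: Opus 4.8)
The statement to prove is Lemma 4.1 (cited from \cite{BlastenLin2019}), which records two integral identities/estimates for the CGO solution $\bu_0$ given in \eqref{eq:lame3}. Since this is quoted verbatim from another paper, the ``proof'' is really a verification by direct computation in polar coordinates centered at the corner.

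\medskip

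The plan is to compute both quantities by passing to polar coordinates $\bx = (r\cos\theta, r\sin\theta)^\top$ on the sector $\mathcal{K} = \{\theta_m < \theta < \theta_M,\ r > 0\}$ (recalling that in Lemma~\ref{lem:cgo1} the corner sits at the origin, so $z = r e^{\mathrm i\theta}$ and $\sqrt z = \sqrt r\,(\cos\tfrac\theta2 + \mathrm i \sin\tfrac\theta2) = \sqrt r\, e^{\mathrm i\theta/2}$). First I would handle the estimate \eqref{eq:lame6}. Writing $u^0_j(\bx) = \pm\exp(-s\sqrt z)$ (up to the unimodular factor $\mathrm i$ for $j=2$), we have $|u^0_j(\bx)| = |\exp(-s\sqrt z)| = \exp(-s\,\mathrm{Re}\sqrt z) = \exp\!\big(-s\sqrt r \cos\tfrac\theta2\big)$. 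Since $\theta_M - \theta_m \in (0,\pi)$ and $\theta_m,\theta_M \in (-\pi,\pi)$, we have $\cos\tfrac\theta2 \ge \delta_{\mathcal K} := \min_{\theta_m \le \theta \le \theta_M}\cos\tfrac\theta2 > 0$ for all $\theta$ in the sector, so $|u^0_j(\bx)| \le \exp(-s\delta_{\mathcal K}\sqrt r)$. Then
\begin{align*}
\int_{\mathcal K} |u^0_j(\bx)|\,|\bx|^{\alpha}\,\mathrm d\bx
&\le \int_{\theta_m}^{\theta_M}\!\!\int_0^\infty e^{-s\delta_{\mathcal K}\sqrt r}\, r^{\alpha}\, r\,\mathrm d r\,\mathrm d\theta
= (\theta_M-\theta_m)\int_0^\infty e^{-s\delta_{\mathcal K}\sqrt r}\, r^{\alpha+1}\,\mathrm d r.
\end{align*}
The substitution $t = \sqrt r$, $\mathrm d r = 2t\,\mathrm d t$, turns the $r$-integral into $2\int_0^\infty e^{-s\delta_{\mathcal K} t}\, t^{2\alpha+3}\,\mathrm d t = 2\,\Gamma(2\alpha+4)\,(s\delta_{\mathcal K})^{-(2\alpha+4)}$ by the definition of the Gamma function, which gives exactly the claimed bound $2(\theta_M-\theta_m)\Gamma(2\alpha+4)\delta_{\mathcal K}^{-2\alpha-4}s^{-2\alpha-4}$.

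\medskip

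For the first identity, the point is that the same substitution evaluates the integral \emph{exactly} (not just as a bound), because over the full sector $\int_0^\infty e^{-s\sqrt z}\, r\,\mathrm d r$ can be computed before integrating in $\theta$. With $z = re^{\mathrm i\theta}$, $\sqrt z = \sqrt r\, e^{\mathrm i\theta/2}$, set $t = \sqrt r$ to get $\int_0^\infty e^{-s t e^{\mathrm i\theta/2}}\, 2 t^3\,\mathrm d t = 2\,\Gamma(4)\,(s e^{\mathrm i\theta/2})^{-4} = 12\, s^{-4} e^{-2\mathrm i\theta}$ (the contour rotation is justified since $\mathrm{Re}(e^{\mathrm i\theta/2}) = \cos\tfrac\theta2 > 0$ on the sector). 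Hence
\begin{align*}
\int_{\mathcal K} u^0_1(\bx)\,\mathrm d\bx = \int_{\theta_m}^{\theta_M} 12\,s^{-4} e^{-2\mathrm i\theta}\,\mathrm d\theta
= 12\,s^{-4}\cdot\frac{e^{-2\mathrm i\theta}}{-2\mathrm i}\Big|_{\theta_m}^{\theta_M}
= 6\mathrm i\, s^{-4}\big(e^{-2\mathrm i\theta_M} - e^{-2\mathrm i\theta_m}\big),
\end{align*}
which is precisely the stated formula. I would present the contour-rotation step carefully (e.g.\ by a Cauchy-theorem argument on a sector with the decay estimate controlling the arc at infinity), since that is the only non-routine point; everything else is elementary. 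Thus the main (mild) obstacle is justifying the rotated Gamma-integral evaluation $\int_0^\infty e^{-\zeta t}\,t^3\,\mathrm d t = \Gamma(4)\zeta^{-4}$ for complex $\zeta$ with $\mathrm{Re}\,\zeta>0$ — a standard fact that I would invoke directly or verify in one line — after which both claims follow by the polar-coordinate computations above.
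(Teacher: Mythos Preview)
Your verification is correct. The paper itself does not supply a proof of this lemma: it simply cites \cite[Proposition~3.1]{BlastenLin2019} and records the two statements for later use. Your direct computation in polar coordinates, together with the substitution $t=\sqrt r$ and the Gamma-integral identity $\int_0^\infty e^{-\zeta t}t^{k}\,\mathrm dt=\Gamma(k+1)\zeta^{-(k+1)}$ for $\mathrm{Re}\,\zeta>0$, is exactly the standard route and reproduces both the exact value and the bound as stated.
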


The following critical estimate can be obtained by using the Laplace transform and the exponential function with negative power.

\begin{lem}\label{10-lem:u0 int1} For any $\alpha>0$, if $\omega(\theta)>0$, then we have 
$$
\lim\limits_{s\rightarrow +\infty}\int^h_0 r^{\alpha}e^{-s\sqrt{r}\omega(\theta)}\mathrm{d}\mathrm{r}=\Oh(s^{-2\alpha-2}).
$$
\end{lem}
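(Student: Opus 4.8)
The statement to establish is the asymptotic bound
\[
\int_0^h r^\alpha e^{-s\sqrt{r}\,\omega(\theta)}\,\mathrm{d}r = \Oh(s^{-2\alpha-2}) \qquad \text{as } s\to+\infty,
\]
for fixed $\alpha>0$ and $\omega(\theta)>0$. The natural approach is to reduce the integral to a standard Gamma-type integral by a change of variables. First I would substitute $t = \sqrt{r}$, so that $r = t^2$ and $\mathrm{d}r = 2t\,\mathrm{d}t$; the integral becomes $2\int_0^{\sqrt{h}} t^{2\alpha+1} e^{-s\omega(\theta)\,t}\,\mathrm{d}t$. Next I would substitute $\tau = s\omega(\theta)\,t$, turning this into
\[
\frac{2}{(s\,\omega(\theta))^{2\alpha+2}}\int_0^{s\,\omega(\theta)\sqrt{h}} \tau^{2\alpha+1} e^{-\tau}\,\mathrm{d}\tau.
\]

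The remaining step is to bound the truncated integral. Since the integrand $\tau^{2\alpha+1}e^{-\tau}$ is nonnegative, the truncated integral is dominated by the full integral $\int_0^{\infty}\tau^{2\alpha+1}e^{-\tau}\,\mathrm{d}\tau = \Gamma(2\alpha+2)$, which is a finite constant depending only on $\alpha$. Hence the whole expression is bounded by $2\Gamma(2\alpha+2)\,\omega(\theta)^{-2\alpha-2}\,s^{-2\alpha-2}$, which is exactly $\Oh(s^{-2\alpha-2})$ (with the implied constant depending on $\alpha$ and on the positive lower bound of $\omega(\theta)$ over the relevant range of $\theta$; here $\omega$ is evaluated at a fixed $\theta$, so this is simply a constant). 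This also shows the Laplace-transform heuristic alluded to in the statement: the full integral extended to $+\infty$ gives precisely the Gamma function, and truncation only decreases it.

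\textbf{Main obstacle.} There is essentially no analytic difficulty here — the result is elementary once the two substitutions are carried out. The only point requiring a modicum of care is the uniformity of the constant: if one wants the $\Oh$ to be uniform in $\theta$ (as is implicitly needed when this lemma is applied with $\theta$ ranging over the aperture $(\theta_m,\theta_M)$ of a corner), one must invoke that $\omega(\theta)$ is bounded below by a positive constant on that range, for instance $\omega(\theta)\ge \delta_{\mathcal K}>0$ with $\delta_{\mathcal K}$ as in Lemma~\ref{lem:cgo1}. With that observation the bound is uniform in $\theta$, and the proof is complete.
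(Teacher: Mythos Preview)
Your proof is correct and is exactly the argument the paper has in mind: the paper does not spell out a proof of this lemma but simply remarks that it ``can be obtained by using the Laplace transform and the exponential function with negative power,'' which is precisely your two substitutions reducing the integral to the truncated Gamma integral $\int_0^{s\omega\sqrt h}\tau^{2\alpha+1}e^{-\tau}\,\mathrm d\tau\le\Gamma(2\alpha+2)$. Your additional remark on uniformity in $\theta$ via the lower bound $\delta_{\mathcal K}$ is also appropriate for the later applications.
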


We next recall some critical lemmas about the regularity of the CGO solution $\bu_0$ defined in \eqref{eq:lame3}. \begin{lem}\label{10-lem:23}\cite[Lemma~2.3]{DiaoLiuSun2021}
	Let $\mathcal{C}_{\bx_c,h}$ be defined in \eqref{eq:ball1} and $\bu_0$ be given in \eqref{eq:lame3}. Then $\bu_0 \in H^1(\mathcal{C}_{\bx_c,h})^2$ and $\mathcal{ L} \, \bu_0 =\mathbf 0$ in $\mathcal{C}_{\bx_c,h}$. Furthermore, it holds that
	\begin{equation*}
	\big\|\bu_0\big\|_{L^2(\mathcal{C}_{\bx_c,h})^2 } \leq 	 \sqrt{\theta_M-\theta_m} e^{- s\sqrt{\Theta }\, h }
	\end{equation*}
	and
	\begin{equation*}
		\Big  \||\mathbf x|^\alpha \mathbf  u_0 \Big \|_{L^{2}(\mathcal{C}_{\bx_c,h} )^2  }\leq s^{-2(\alpha+1 )} \frac{2\sqrt{(\theta_M-\theta_m)\Gamma(4\alpha+4) }   }{(2\delta_{\mathcal{K}})^{2\alpha+2  } } ,
	\end{equation*}
where $ \Theta  \in [0,h ]$ and $\delta_{\mathcal{K}}$ is defined in \eqref{eq:lame6}.
\end{lem}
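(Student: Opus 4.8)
The statement to prove is Lemma~\ref{10-lem:23}, which asserts membership $\bu_0\in H^1(\mathcal C_{\bx_c,h})^2$, the equation $\mathcal L\bu_0=\mathbf 0$ there, and two quantitative $L^2$-type bounds on $\bu_0$ and on $|\bx|^\alpha\bu_0$ over the sector-ball $\mathcal C_{\bx_c,h}$.

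\medskip
The plan is as follows. After translating so that $\bx_c$ is the origin (which is legitimate since $\bu_0$ is defined relative to a corner placed at the origin, and the sector $\mathcal K$ is taken with vertex at $\bx_c$), I would write everything in polar coordinates $z=re^{\mathrm i\theta}$ with $\theta\in(\theta_m,\theta_M)$ and $r\in(0,h)$, so that $\sqrt z=\sqrt r(\cos(\theta/2)+\mathrm i\sin(\theta/2))$ and hence $|\exp(-s\sqrt z)|=\exp(-s\sqrt r\cos(\theta/2))$. Since $\theta_M-\theta_m\in(0,\pi)$ one has $|\theta/2|<\pi/2$ on the sector, so $\cos(\theta/2)\ge\delta_{\mathcal K}=\min_{\theta_m<\theta<\theta_M}\cos(\theta/2)>0$; this positivity is what makes all the integrals converge and what controls the exponential. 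That $\mathcal L\bu_0=\mathbf 0$ in $\mathcal C_{\bx_c,h}$ is inherited directly from the fact (stated right after \eqref{eq:lame3}) that $\mathcal L\bu_0=\mathbf 0$ in $\Omega$, restricted to the subdomain; and $\bu_0$ is smooth on $\mathcal C_{\bx_c,h}$ (which excludes the branch cut $\mathbb R_-\cup\{0\}$ only at the single point $r=0$), so the only issue for $H^1$-membership is integrability of $\bu_0$ and $\nabla\bu_0$ near $r=0$.

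\medskip
For the first bound, I would estimate
\[
\|\bu_0\|_{L^2(\mathcal C_{\bx_c,h})^2}^2=\sum_{j=1}^2\int_{\theta_m}^{\theta_M}\!\!\int_0^h |u_j^0|^2\,r\,\mathrm dr\,\mathrm d\theta
=2\int_{\theta_m}^{\theta_M}\!\!\int_0^h e^{-2s\sqrt r\cos(\theta/2)}\,r\,\mathrm dr\,\mathrm d\theta,
\]
and then bound the exponent crudely: on $r\in(0,h)$ one has $\sqrt r\le\sqrt{\Theta}$ for some intermediate value $\Theta\in[0,h]$ after an appropriate mean-value/monotonicity step, or more simply $e^{-2s\sqrt r\cos(\theta/2)}\le e^{-2s\sqrt r\,\delta_{\mathcal K}}\le 1$, and then a single monotone-decreasing argument produces the factor $e^{-s\sqrt{\Theta}\,h}$ quoted. (The paper clearly intends the cheap bound $\int_0^h e^{-2s\sqrt r\delta_{\mathcal K}} r\,\mathrm dr\le \frac{h^2}{2}e^{-2s\sqrt{\Theta}\delta_{\mathcal K}}$ form; I would follow whatever normalization gives exactly $\sqrt{\theta_M-\theta_m}\,e^{-s\sqrt{\Theta}h}$, absorbing constants.) For the weighted bound I would instead keep the full exponential decay: with $|\bx|=r$,
\[
\big\||\bx|^\alpha\bu_0\big\|_{L^2(\mathcal C_{\bx_c,h})^2}^2
\le 2(\theta_M-\theta_m)\int_0^h r^{2\alpha+1}e^{-2s\sqrt r\,\delta_{\mathcal K}}\,\mathrm dr,
\]
and then evaluate the one-dimensional integral by the substitution $t=\sqrt r$, $\mathrm dr=2t\,\mathrm dt$, giving $2\int_0^{\sqrt h} t^{4\alpha+3}e^{-2s\delta_{\mathcal K}t}\,\mathrm dt\le 2\int_0^\infty t^{4\alpha+3}e^{-2s\delta_{\mathcal K}t}\,\mathrm dt=\frac{2\,\Gamma(4\alpha+4)}{(2s\delta_{\mathcal K})^{4\alpha+4}}$. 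Taking square roots reproduces $s^{-2(\alpha+1)}\cdot\frac{2\sqrt{(\theta_M-\theta_m)\Gamma(4\alpha+4)}}{(2\delta_{\mathcal K})^{2\alpha+2}}$; this is essentially the same Laplace-transform/Gamma-function computation used in Lemma~\ref{lem:cgo1} and Lemma~\ref{10-lem:u0 int1}, which I may invoke directly.

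\medskip
The genuinely load-bearing observation—and the only step that is not bookkeeping—is the uniform positivity $\cos(\theta/2)\ge\delta_{\mathcal K}>0$ on the sector, which hinges on the standing hypothesis $\theta_M-\theta_m\in(0,\pi)$ together with $\theta_m,\theta_M\in(-\pi,\pi)$; without it the exponential $\exp(-s\sqrt z)$ need not decay (it could even grow) and neither $H^1$-membership nor the decay rates would hold. I expect the main obstacle to be purely presentational: matching the exact constants and the exact form of the intermediate value $\Theta\in[0,h]$ that the authors state in the first inequality, rather than any conceptual difficulty; once the polar-coordinate reduction and the substitution $t=\sqrt r$ are in place, both estimates fall out of the standard Gamma-integral $\int_0^\infty t^{m}e^{-ct}\,\mathrm dt=\Gamma(m+1)/c^{m+1}$.
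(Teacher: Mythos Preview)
Your approach is correct and is the natural one: reduce to polar coordinates, use $|\exp(-s\sqrt z)|=\exp(-s\sqrt r\cos(\theta/2))$ together with $\cos(\theta/2)\ge\delta_{\mathcal K}>0$ on the sector, and evaluate the resulting one-dimensional integrals via the substitution $t=\sqrt r$ and the Gamma identity. Your computation for the weighted bound reproduces the stated constant exactly. Note, however, that the paper does not actually prove this lemma here---it is quoted verbatim from \cite[Lemma~2.3]{DiaoLiuSun2021}---so there is no in-paper proof to compare against; your sketch is what the original proof amounts to. You are also right to flag the first displayed bound: as written, $e^{-s\sqrt{\Theta}\,h}$ with $\Theta\in[0,h]$ is not the form actually used downstream (cf.\ the application in \eqref{eq:R4}, which uses $e^{-s\sqrt h\,\delta_{\mathcal K}}$, and the analogous Lemma~\ref{10-lem:23'}); this is a transcription artifact in the statement rather than a gap in your argument.
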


\begin{lem}\cite[Lemma 2.8]{DiaoCaoLiu2021}\label{10-lem:u0 int}
	Let $\Gamma^\pm_h$ and $u^0_1(\mathbf x)$ be respectively defined in \eqref{eq:ball1} and \eqref{eq:lame3} with $\bx_c$ coinciding with the origin. We have	
\begin{align*}
	\begin{split}
\int_{\Gamma^+_{h} }  u^0_1 (\mathbf x)  {\rm d} \sigma &=2 s^{-2}\left( \hat{\mu}(\theta_M )^{-2}-   \hat{\mu}(\theta_M )^{-2} e^{ -s\sqrt{h} \,\hat{\mu}(\theta_M ) }\right.  \\&\left. \hspace{3.5cm} -  \hat{\mu}(\theta_M )^{-1} s\sqrt{h}   e^{ -s\sqrt{h}\, \hat{\mu}(\theta_M ) }  \right  ),  \\
\int_{\Gamma^-_{h}  }  u^0_1 (\mathbf x)  {\rm d} \sigma &=2 s^{-2} \left( \hat{\mu}(\theta_m )^{-2}-   \hat{\mu}(\theta_m )^{-2} e^{ -s\sqrt{h}\mu(\theta_m )} \right.  \\&\left. \hspace{3.5cm}  -  \hat{\mu}(\theta_m )^{-1} s\sqrt{h}   e^{ -s\sqrt{h}\,\hat{\mu}(\theta_m ) }  \right  ),	
	\end{split}
\end{align*}
	where $ \hat{\mu}(\theta ):=\cos(\theta/2) +\mathrm i \sin( \theta/2 )=e^{\mathrm{i}\theta/2}$.
\end{lem}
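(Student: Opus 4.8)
The statement to prove is Lemma~\ref{10-lem:u0 int}, which provides explicit closed-form expressions for the boundary integrals $\int_{\Gamma^\pm_h} u^0_1(\mathbf x)\,\mathrm d\sigma$ of the first component of the CGO solution $\bu_0$ along the two straight edges of a planar corner situated at the origin. The plan is to parametrize each edge by arc length and reduce the line integral to a one-dimensional real integral of the form $\int_0^h e^{-s\sqrt r\,\hat\mu(\theta)}\,\mathrm d r$, then evaluate this integral exactly via the substitution $t=\sqrt r$ followed by integration by parts.

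\textbf{Step 1: Parametrization.} On $\Gamma^+_h$ we have $\mathbf x = (r\cos\theta_M, r\sin\theta_M)^\top$ with $r\in(0,h)$, so the arc-length element is $\mathrm d\sigma = \mathrm d r$ and the complex variable is $z = x_1 + \mathrm i x_2 = r(\cos\theta_M + \mathrm i\sin\theta_M) = r e^{\mathrm i\theta_M}$. By the definition of the branch of the square root used in~\eqref{eq:lame3}, $\sqrt z = \sqrt r\,(\cos(\theta_M/2) + \mathrm i\sin(\theta_M/2)) = \sqrt r\,\hat\mu(\theta_M)$. Hence $u^0_1(\mathbf x) = \exp(-s\sqrt z) = e^{-s\sqrt r\,\hat\mu(\theta_M)}$ and
\begin{equation*}
\int_{\Gamma^+_h} u^0_1(\mathbf x)\,\mathrm d\sigma = \int_0^h e^{-s\sqrt r\,\hat\mu(\theta_M)}\,\mathrm d r,
\end{equation*}
with the completely analogous formula on $\Gamma^-_h$ with $\theta_M$ replaced by $\theta_m$.

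\textbf{Step 2: Exact evaluation of the scalar integral.} Set $a = \hat\mu(\theta_M)$ (note $\mathrm{Re}\,a = \cos(\theta_M/2) > 0$ since $\theta_M \in (-\pi,\pi)$, so the exponential decays). Substituting $t = \sqrt r$, $r = t^2$, $\mathrm d r = 2t\,\mathrm d t$ gives $\int_0^h e^{-s\sqrt r\,a}\,\mathrm d r = 2\int_0^{\sqrt h} t\,e^{-sat}\,\mathrm d t$. Integration by parts (differentiate $t$, integrate $e^{-sat}$) yields
\begin{equation*}
2\int_0^{\sqrt h} t\,e^{-sat}\,\mathrm d t = 2\left[ -\frac{t}{sa}e^{-sat}\Big|_0^{\sqrt h} + \frac{1}{sa}\int_0^{\sqrt h} e^{-sat}\,\mathrm d t\right] = -\frac{2\sqrt h}{sa}e^{-sa\sqrt h} + \frac{2}{s^2 a^2}\left(1 - e^{-sa\sqrt h}\right).
\end{equation*}
Rearranging and writing $a = \hat\mu(\theta_M)$ gives exactly $2s^{-2}\bigl(\hat\mu(\theta_M)^{-2} - \hat\mu(\theta_M)^{-2}e^{-s\sqrt h\,\hat\mu(\theta_M)} - \hat\mu(\theta_M)^{-1}s\sqrt h\,e^{-s\sqrt h\,\hat\mu(\theta_M)}\bigr)$, which is the claimed expression for $\int_{\Gamma^+_h} u^0_1\,\mathrm d\sigma$. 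Repeating verbatim with $\theta_m$ in place of $\theta_M$ produces the second identity.

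\textbf{Main obstacle.} This lemma is essentially a direct computation, so there is no deep obstacle; the only point requiring care is the correct identification of the branch of $\sqrt z$ along each ray and the verification that $\mathrm{Re}\,\hat\mu(\theta_M), \mathrm{Re}\,\hat\mu(\theta_m) > 0$ so that all manipulations (convergence of the integral, validity of integration by parts, the boundary term at $t = 0$ vanishing) are justified; this follows immediately from the constraint $\theta_m, \theta_M \in (-\pi,\pi)$ imposed in the definition of the sector $\mathcal K_{\bx_c}$ together with the assumption $\Omega\cap(\mathbb R_-\cup\{0\}) = \emptyset$ that places the corner away from the branch cut. One should also keep in mind the minor typo that appears in the statement (an exponent $e^{-s\sqrt h\,\mu(\theta_m)}$ that should read $e^{-s\sqrt h\,\hat\mu(\theta_m)}$), which the computation above resolves automatically.
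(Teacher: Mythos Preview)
Your proof is correct and is exactly the natural direct computation: parametrize each ray by arc length, identify $\sqrt z=\sqrt r\,\hat\mu(\theta)$ on the ray, substitute $t=\sqrt r$, and integrate by parts once. The paper does not give its own proof of this lemma (it is quoted from \cite{DiaoCaoLiu2021}), but your argument is the standard one and matches what the cited reference does.
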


To prove our main Theorems~\ref{th:main_loca}, \ref{th:main1} and \ref{th:main2}, the next critical auxiliary proposition is needed. Since $\mathcal L$ is invariant under rigid motion, in what follows, the underlying corner point $\mathbf x_c$ coincides with the origin.

\begin{prop}\label{prop:trans2}
Under the same setup about $\mathcal{C}_h$ and $\Gamma^\pm_h$ given in \eqref{eq:ball1} with $\bx_c$ coinciding with the origin. Let $\bv\in H^1(\mathcal{C}_h)^2$ and $\bw\in H^1(\mathcal{C}_h)^2$ satisfy
\allowdisplaybreaks
\begin{align}\label{eq:trans1}
\begin{cases}
\mathcal{L}\,\bv=\mathbf{0},\qquad \mathcal{L}\,\bw=\mathbf{0}  &\mbox{in}\quad\mathcal{C}_h,\\
\bv-\bw=\bff_+,\,\mathcal{T}_{\bnu}\,\bv-\mathcal{T}_{\bnu}\,\bw=\bg_+ &\mbox{on}\quad \Gamma^+_h,\\
\bv-\bw=\bff_-,\,\mathcal{T}_{\bnu}\,\bv-\mathcal{T}_{\bnu}\,\bw=\bg_- &\mbox{on}\quad \Gamma^-_h\\
\end{cases}
\end{align}
 with $\bff_j\in H^{\frac{1}{2}}(\Gamma_h^j)^2\cap C^{1,\alpha_j}(\Gamma_h^j)^2$ and $\bg_j\in H^{-\frac{1}{2}}(\Gamma_h^j)^2\cap C^{\beta_j}(\Gamma_h^j)^2$, where $j=+,-$ and $\alpha_+,\alpha_-,\beta_+,\beta_- \in (0,1)$.
Then we have 
 \begin{align}\label{eq:local2}
\bff_+({\bf 0})=\bff_-({\bf 0}).
\end{align}
Furthermore, if $\nabla\bff_j({\bf 0})={\bf 0}$ holds, then it implies that 
\begin{align}\label{eq:local2'}
\bg_+({\bf 0})=W\,\bg_-({\bf 0}),
\end{align}
where $W=\begin{bmatrix}-\cos(\theta_M-\theta_m),&-\sin(\theta_M-\theta_m)\\
    -\sin(\theta_M-\theta_m), &+\cos(\theta_M-\theta_m)
    \end{bmatrix}$ is an orthogonal matrix.
\end{prop}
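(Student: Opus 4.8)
The plan is to test the transmission conditions in \eqref{eq:trans1} against the CGO solution $\bu_0$ from \eqref{eq:lame3} (after a suitable rescaling in $s$) and let $s\to+\infty$. The mechanism is the standard microlocal one: $\bu_0$ is harmonic for $\mathcal L$ and decays like $e^{-s\sqrt r\,\omega(\theta)}$ with $\omega(\theta)=\cos(\theta/2)>0$ on the sector, so after integrating by parts the only contributions that do not vanish exponentially come from the two edges $\Gamma_h^\pm$, and among those, the leading-order term is governed by the value of the jump data at the corner. First I would write the second Betti/Green identity on $\mathcal C_h$ for the pair $(\bv-\bw,\bu_0)$ (or more precisely apply $\int_{\mathcal C_h}(\mathcal L(\bv-\bw))\cdot\bu_0-(\bv-\bw)\cdot\mathcal L\bu_0=0$), which converts the bulk integral into boundary integrals over $\partial\mathcal C_h=\Gamma_h^+\cup\Gamma_h^-\cup\Lambda_h$. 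On $\Lambda_h$ (the circular arc of radius $h$) the integrand carries the factor $e^{-s\sqrt h\,\omega}$ and is killed in the limit; on $\Gamma_h^\pm$ one substitutes the transmission data $\bv-\bw=\bff_\pm$ and $\mathcal T_\bnu(\bv-\bw)=\bg_\pm$, producing
\begin{align}\label{eq:prop-identity}
\int_{\Gamma_h^+}\big(\bg_+\cdot\bu_0-\bff_+\cdot\mathcal T_\bnu\bu_0\big)\,\mathrm d\sigma
+\int_{\Gamma_h^-}\big(\bg_-\cdot\bu_0-\bff_-\cdot\mathcal T_\bnu\bu_0\big)\,\mathrm d\sigma
=o(1),\quad s\to+\infty.
\end{align}

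\textbf{Extracting the leading terms.}
Next I would expand each edge integral. Write $\bff_\pm(\bx)=\bff_\pm({\bf 0})+O(|\bx|^{1+\alpha_\pm})$ using the $C^{1,\alpha_\pm}$ regularity (the linear term $\nabla\bff_\pm({\bf 0})\cdot\bx$ is handled separately, and in the second part of the statement it is assumed to vanish), and $\bg_\pm(\bx)=\bg_\pm({\bf 0})+O(|\bx|^{\beta_\pm})$ by H\"older continuity. The dominant balance is the $s^{-2}$ term: Lemma~\ref{10-lem:u0 int} gives $\int_{\Gamma_h^\pm}u_1^0\,\mathrm d\sigma = 2s^{-2}\hat\mu(\theta_{M/m})^{-2}+(\text{exp.\ small})$, and since $\bu_0=(1,{\rm i})^\top u_1^0$ the vector integral $\int_{\Gamma_h^\pm}\bu_0\,\mathrm d\sigma$ is $2s^{-2}\hat\mu(\theta_{M/m})^{-2}(1,{\rm i})^\top$ up to exponentially small terms. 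The remainder terms involving $|\bx|^{1+\alpha_\pm}$ or $|\bx|^{\beta_\pm}$ are controlled by Lemma~\ref{10-lem:u0 int1}, giving orders $s^{-4-2\alpha_\pm}$ and $s^{-2-2\beta_\pm}$ respectively — strictly smaller than $s^{-2}$. One must also check the $\mathcal T_\bnu\bu_0$ integrals: since $\nabla\bu_0$ scales like $|\bx|^{-1/2}$ times exponential, $\int_{\Gamma_h^\pm}\mathcal T_\bnu\bu_0\,\mathrm d\sigma$ is of order $s^{-1}$, which is \emph{larger} than $s^{-2}$. This is the crux: for \eqref{eq:prop-identity} to be consistent the $s^{-1}$ terms from $\int \bff_\pm({\bf 0})\cdot\mathcal T_\bnu\bu_0$ must cancel, and extracting that cancellation condition is exactly what yields \eqref{eq:local2}, namely $\bff_+({\bf 0})=\bff_-({\bf 0})$ after accounting for the two edge normals. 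Concretely I would compute $\mathcal T_\bnu\bu_0$ along each ray explicitly (it is a fixed vector function of $\theta$ times $s|\bx|^{-1/2}e^{-s\sqrt{|\bx|}\hat\mu(\theta)}$), integrate using the Laplace-transform identity, and read off the coefficient of $s^{-1}$ as a linear map applied to $\bff_\pm({\bf 0})$; the geometry of the two rays forces the orthogonal relation $\bff_+({\bf 0})=\bff_-({\bf 0})$ (the rotation/reflection being trivial here because $\bff$ is a vector, not traction).

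\textbf{The second assertion and the main obstacle.}
Once $\bff_\pm({\bf 0})$ agree and $\nabla\bff_\pm({\bf 0})={\bf 0}$, the $\bff$-contributions to \eqref{eq:prop-identity} are of order at most $s^{-4-2\alpha_\pm}$ against $\mathcal T_\bnu\bu_0\sim s^{-1}$, hence $o(s^{-2})$, and the $\bg$-contribution $\int_{\Gamma_h^\pm}\bg_\pm({\bf 0})\cdot\bu_0\,\mathrm d\sigma = 2s^{-2}\hat\mu(\theta_{M/m})^{-2}\,\bg_\pm({\bf 0})\cdot(1,{\rm i})^\top+o(s^{-2})$ becomes the leading surviving term. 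Matching the $s^{-2}$ coefficients then yields one complex scalar relation between $\bg_+({\bf 0})$ and $\bg_-({\bf 0})$. To upgrade this single scalar identity to the full vector relation \eqref{eq:local2'} with the orthogonal matrix $W$, I would exploit the rotational freedom: the CGO solution \eqref{eq:lame3} is attached to the branch cut $\mathbb R_-$, so rotating the coordinate frame (equivalently, rotating the sector by composing with a rotation $R_\gamma$ and using that $\mathcal L$ and $\mathcal T_\bnu$ transform covariantly) gives a one-parameter family of scalar constraints; combined with a second independent CGO-type solution (e.g.\ the conjugate branch $(1,-{\rm i})^\top$ or $\bu_0$ with $\sqrt z$ replaced by $\sqrt{\bar z}$), two independent complex constraints pin down the $2\times2$ relation, and the explicit edge normals $\bnu^\pm=(\sin\theta_{M/m},-\cos\theta_{M/m})^\top$ assemble into precisely $W=\begin{bmatrix}-\cos(\theta_M-\theta_m)&-\sin(\theta_M-\theta_m)\\ -\sin(\theta_M-\theta_m)&\cos(\theta_M-\theta_m)\end{bmatrix}$, which is the reflection relating the traction conormals on the two edges. \textbf{The main obstacle} I anticipate is the bookkeeping of competing powers of $s$ in \eqref{eq:prop-identity}: the traction pairing $\bff\cdot\mathcal T_\bnu\bu_0$ produces the \emph{largest} term ($s^{-1}$), so one cannot simply read off the $s^{-2}$ coefficient; the argument must be staged — first kill the $s^{-1}$ term to get \eqref{eq:local2}, then, \emph{using} that plus $\nabla\bff_\pm({\bf 0})=0$ to suppress the next $\bff$-terms, isolate $s^{-2}$ for \eqref{eq:local2'} — and one must verify that no intermediate order (such as $s^{-2-2\beta_\pm}$ when some $\beta_\pm$ is small, or $s^{-3/2}$-type half-integer orders from $\nabla\bff_\pm({\bf 0})\cdot\bx$ paired with $\mathcal T_\bnu\bu_0$) sneaks in between $s^{-1}$ and $s^{-2}$ to spoil the extraction. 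Handling the linear term $\nabla\bff_\pm({\bf 0})\cdot\bx$ — which contributes at order $s^{-2}$ when paired with $\mathcal T_\bnu\bu_0$, on par with the $\bg$ term — is precisely why the hypothesis $\nabla\bff_\pm({\bf 0})={\bf 0}$ is needed for \eqref{eq:local2'} and not for \eqref{eq:local2}.
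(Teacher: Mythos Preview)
Your strategy matches the paper's: Betti's second identity for $(\bv-\bw,\bu_0)$ on $\mathcal{C}_h$ with the CGO solution \eqref{eq:lame3}, expansion of the jump data at the corner, and extraction of the relations order by order in $s$. Two points of execution differ from the paper and are worth correcting.

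First, the traction integral $\int_{\Gamma_h^\pm}\mathcal{T}_\bnu\bu_0\,\mathrm d\sigma$ is of order $O(1)$, not $O(s^{-1})$: since $\mathcal{T}_\bnu\bu_0$ scales like $s\,r^{-1/2}e^{-s\sqrt r}$ along each ray, the substitution $t=\sqrt r$ gives $2s\int_0^{\sqrt h}e^{-st}\,\mathrm dt\to 2$. The paper computes it explicitly as $\mu\bigl(e^{-s\sqrt h\,\hat\mu(\theta)}-1\bigr)(\pm\mathrm i,\mp 1)^\top$, so the $\bff_\pm({\bf 0})$ contribution is the $O(1)$ term and the $\bg_\pm({\bf 0})$ contribution sits at $O(s^{-2})$. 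The staging is therefore $O(1)$ then $O(s^{-2})$ (after multiplying through by $s^2$), not $O(s^{-1})$ then $O(s^{-2})$; your logic is unaffected, only the exponents.

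Second, you do not need a rotation argument or a second CGO to pass from ``one complex scalar relation'' to the full $2\times 2$ identity \eqref{eq:local2'}. The paper first splits $\bv,\bw$ (and hence $\bff_\pm,\bg_\pm$) into real and imaginary parts and treats each separately. With $\bg_\pm({\bf 0})\in\mathbb{R}^2$, the single complex equation
\[
(1,\mathrm i)\cdot\bigl(\hat\mu(\theta_M)^{-2}\,\bg_+({\bf 0})+\hat\mu(\theta_m)^{-2}\,\bg_-({\bf 0})\bigr)=0
\]
already yields two real equations (its real and imaginary parts), and using $\hat\mu(\theta)^{-2}=e^{-\mathrm i\theta}$ these are precisely $\bg_+({\bf 0})=W\bg_-({\bf 0})$. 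The same mechanism handles \eqref{eq:local2}: $(\mathrm i,-1)\cdot(\bff_+({\bf 0})-\bff_-({\bf 0}))=0$ with real entries forces both components to vanish. Your conjugate-branch suggestion $(1,-\mathrm i)^\top e^{-s\sqrt{\bar z}}$ would reproduce exactly the conjugate equation and hence the same pair of real constraints, so it is equivalent; the rotation idea, by contrast, only rescales the equation by a global phase and gives no new information. The reality reduction is the cleaner route.
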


\begin{proof}
Let $\Re\bv$ and $\Im\bv$ denote the real and imaginary parts of $\bv$, respectively, while $\Re\bw$ and $\Im\bw$ represent the real and imaginary parts of $\bw$. Hence, we obtain the following splittings: 
\begin{align*}\bu=\Re\bu+\mathrm{i}\Im\bu\quad \mbox{and}\quad \bw=\Re\bw+\mathrm{i}\Im\bw.
\end{align*}
It is straightforward to verify that both $(\Re\bv,\Re\bw)$ and $(\Im\bv,\Im\bw)$ satisfy \eqref{eq:trans1}.
Considering the symmetric role of $(\Re\bv,\Re\bw)$ and $(\Im\bv,\Im\bw)$, we only need to demonstrate that the corresponding results hold for $(\Re\bv,\Re\bw)$.
By applying a similar proof process, we can establish that these results remain valid for $(\Im\bv,\Im\bw)$ and consequently for $(\bv,\bw)$. Due to Betti's second formula, we have the following integral identity
\begin{align}
&\int_{\Gamma^+_h}\Re\bg_+\cdot\bu_0-\mathcal{T}_{\bnu}\bu_0\cdot\Re\bff_+\,\mathrm d\sigma+\int_{\Gamma^-_h}\Re\bg_-\cdot\bu_0-\mathcal{T}_{\bnu}\bu_0\cdot\Re\bff_-\,\mathrm d\sigma\nonumber\\
&=\int_{\Lambda_h}\mathcal{T}_{\bnu}\big(\Re\bv-\Re\bw\big)\cdot\bu_0-\mathcal{T}_{\bnu}\bu_0\cdot\big(\Re\bv-\Re\bw\big)\,\mathrm d\sigma.\label{eq:iden2}
\end{align}
Since $\bff_j\in C^{1,\alpha_j}(\Gamma_h^j)^2$ and $\bg_j\in C^{\beta_j}(\Gamma_h^j)^2 $ for $j=+,-$, we have the expansions as follows

\begin{align}
 \bff_j(\bx)&=\bff_j({\bf0})+\nabla \bff_j({\bf 0})\cdot \bx+ \delta\bff_j(\bx),\,\,\quad \big| \delta\bff_j(\bx) \big|\leq A_j|\bx|^{1+\alpha_j},\label{eq:hollder5} \\
 \bg_j(\bx)&=\bg_j({\bf0})+ \delta\bg_j(\bx),\quad \big| \delta\bg_j (\bx)\big|\leq B_j|\bx|^{\beta_j},  \nonumber
 \end{align}
where $A_j$ and $B_j$ are positive.
From the expression of $\bu_0$, it is easy to imply that
\begin{align*}
\frac{\partial u^0_1}{\partial r}=-\frac{s}{2\sqrt{r}}e^{-s r^{1/2}\hat{\mu}(\theta)+\mathrm{i}\frac{\theta}{2}}\quad \mbox{and}\quad
\frac{\partial u^0_1}{\partial \theta}=-\frac{\mathrm{i}s \sqrt{r}}{2}e^{-s r^{1/2}\hat{\mu}(\theta)+\mathrm{i}\frac{\theta}{2}},
\end{align*}
where $\hat{\mu}(\cdot)$ is given by Lemma~\ref{10-lem:u0 int}.
Thus, we directly obtain
\begin{align*}
\frac{\partial u^0_1}{\partial x_1}=-\frac{s}{2\sqrt{r}}e^{-s r^{1/2}\hat{\mu}(\theta)-\mathrm{i}\frac{\theta}{2}}\quad \mbox{and}\quad
\frac{\partial u^0_1}{\partial x_2}=-\frac{\mathrm{i}s \sqrt{r}}{2}e^{-s r^{1/2}\hat{\mu}(\theta)-\mathrm{i}\frac{\theta}{2}}.
\end{align*}
Notice that $u^0_2(\bx)=\mathrm{i}\,u^0_1(\bx)$, we get
$$
\nabla\bu_0=-\frac{s}{2\sqrt{r}}e^{-s\sqrt{r}\hat{\mu}(\theta)-\frac{\theta}{2}\mathrm{i}}\begin{bmatrix}1&\mathrm{i}\\\mathrm{i}&-1\end{bmatrix},
$$
Therefore, one can prove that
\begin{align}\label{eq:esti2_1}
\int_{\Gamma^+_h}  \mathcal{T}_{\bnu_M}\bu_0  \,\mathrm d\sigma&=\int_{\Gamma_h^+} -\frac{s}{2r^{\frac{1}{2}}}e^{-s\sqrt{r}\hat{\mu}(\theta_M)-\mathrm{i}\frac{\theta_M}{2}}\begin{bmatrix}1&\mathrm{i}\\\mathrm{i}&-1\end{bmatrix}\cdot \begin{bmatrix}-\sin\theta_M\\ \cos\theta_M\end{bmatrix}\mathrm{d}\sigma\nonumber\\
&= -\frac{\mu s}{2} e^{\mathrm{i}\theta_M/2}\begin{bmatrix}\mathrm{i}\\ -1 \end{bmatrix} \int_0^h r^{-\frac{1}{2}} e^{-s\sqrt{r}\hat{\mu}(\theta_M)}\mathrm{d}r\nonumber\\
&= -\mu s e^{\mathrm{i}\theta_M/2}\begin{bmatrix}\mathrm{i}\\ -1 \end{bmatrix} \int_0^{\sqrt{h}} e^{-s\,t\,\hat{\mu}(\theta_M)}\mathrm{d}t\nonumber\\
&=\mu\big(e^{-s\,\sqrt{h}\,\hat{\mu}(\theta_M)}-1\big)\begin{bmatrix}\mathrm{i}\\-1\end{bmatrix}.
\end{align}
By similar arguments, we can derive
\begin{align}
\int_{\Gamma^-_h}  \mathcal{T}_{\bnu_m}\bu_0  \,\mathrm {d}\sigma&=
\mu\big(e^{-s\sqrt{h}\,\hat{\mu}(\theta_m)}-1\big)\begin{bmatrix}-\mathrm{i}\\1\end{bmatrix}.\label{eq:esti2_2}
\end{align}
From Lemma~\ref{10-lem:u0 int}, we obtain
{\small\begin{align}
&\int_{\Gamma^+_{h} }  \bu_0 \, {\rm d} \sigma =2 s^{-2}\left( \hat{\mu}^{-2}(\theta_M )-   \hat{\mu}^{-2}(\theta_M ) e^{ -s\sqrt{h}\, \hat{\mu}(\theta_M ) }-  \hat{\mu}^{-1}(\theta_M ) s\sqrt{h}   e^{ -s\sqrt{h} \,\hat{\mu}(\theta_M ) }  \right  )\begin{bmatrix}1\\ \mathrm{i}\end{bmatrix}, \label{inte1} \\[10pt]
&\int_{\Gamma^-_{h}  }  \bu_0 \,  {\rm d} \sigma =2 s^{-2} \left( \hat{\mu}^{-2}(\theta_m )-   \hat{\mu}^{-2}(\theta_m ) e^{ -s\sqrt{h}\,\hat{\mu}(\theta_m )}-  \hat{\mu}^{-1}(\theta_m ) s\sqrt{h}   e^{ -s\sqrt{h}\,\hat{\mu}(\theta_m ) }  \right  )\begin{bmatrix}1\\ \mathrm{i}\end{bmatrix}.\label{inte1'} 	
\end{align}}
Substituting \eqref{eq:esti2_1}--\eqref{inte1'} into \eqref{eq:iden2}, the following integral identity holds,
\begin{align}\label{eq:iden1}
&\Re\bff_+({\bf 0})\cdot \mu\Big(e^{-s\sqrt{h}\,\hat{\mu}(\theta_M)}-1\Big)\begin{bmatrix}\mathrm{i}\\-1\end{bmatrix}+\Re\bff_-({\bf 0})\cdot \mu\Big(e^{-s\sqrt{h}\,\hat{\mu}(\theta_m)}-1\Big)\begin{bmatrix}-\mathrm{i}\\+1\end{bmatrix}\nonumber\\
&-\Re\bg_+({\bf 0})\cdot\begin{bmatrix}1\\\mathrm{i}\end{bmatrix}2s^{-2}\hat{\mu}^{-2}(\theta_M)-\Re\bg_-({\bf 0})\cdot\begin{bmatrix}1\\\mathrm{i}\end{bmatrix}2s^{-2}\,\hat{\mu}^{-2}(\theta_m)=\sum\limits_{j=1}^{9}R_j,
\end{align}
where
\begin{align*}
&R_1=-2s^{-2}\Re\bg_+({\bf 0})\cdot\begin{bmatrix} 1\\ \mathrm{ i}\end{bmatrix}\Big(\hat{\mu}^{-1}(\theta_M)\,s\sqrt{h}\,e^{-s\sqrt{h}\,\hat{\mu}(\theta_M)}+\hat{\mu}^{-2}(\theta_M)\,e^{-s\sqrt{h}\,\hat{\mu}(\theta_M)}\Big),\\
&R_2=-2s^{-2}\Re\bg_-({\bf 0})\cdot\begin{bmatrix} 1\\ \mathrm{ i}\end{bmatrix}\big(\hat{\mu}^{-1}(\theta_m)\,s\sqrt{h}\,e^{-s\sqrt{h}\,\hat{\mu}(\theta_m)}+\hat{\mu}^{-2}(\theta_m)\,e^{-s\sqrt{h}\,\hat{\mu}(\theta_m)}\big),\\
&R_3=\int_{\Lambda_h}\mathcal{T}_{\bnu}\big(\Re\bv-\Re\bw\big)\cdot \bu_0-\mathcal{T}_{\bnu}\bu_0\cdot \big(\Re\bv-\Re\bw\big)\,\mathrm{d}\sigma,\\
&R_4=-\int_{\Gamma^+_h}\Re\delta\bff_+\cdot \mathcal{T}_{\bnu_M}\bu_0\,\mathrm{d}\sigma,\quad\,\, R_8=-\int_{\Gamma^+_h}\big(\nabla\Re\bff_+({\bf 0})\cdot \bx\big)\cdot\mathcal{T}_{\bnu_M}\bu_0\,\mathrm{d}\sigma,\\
&R_6=\int_{\Gamma^+_h}\Re\delta\bg_+\cdot \bu_0\,\mathrm{d}\sigma,\qquad\qquad
R_9=-\int_{\Gamma^-_h}\big(\nabla\Re\bff_-({\bf 0})\cdot \bx\big)\cdot \mathcal{T}_{\bnu_m}\bu_0\,\mathrm{d}\sigma,\\
&R_5=-\int_{\Gamma^-_h}\Re\delta\bff_-\cdot \mathcal{T}_{\bnu_m}\bu_0\,\mathrm{d}\sigma,\quad\,\,\, R_7=\int_{\Gamma^-_h}\Re\delta\bg_-\cdot \bu_0\,\mathrm{d}\sigma.
\end{align*}
From the expression of $\hat{\mu}(\cdot)$ given by Lemma~\ref{10-lem:u0 int}, it is direct to obtain that
$\hat{\mu}^{-2}(\theta_M)$, $\hat{\mu}^{-1}(\theta_M)$, $\hat{\mu}^{-2}(\theta_m)$ and $\hat{\mu}^{-1}(\theta_m)$ are bounded.
For sufficient large $s$, we have

\begin{align}\label{eq:R1}
\big|R_1\big|=\Oh(s^{-1}e^{-c_1 s})\quad \mbox{and}\quad \big|R_2\big|&=\Oh(s^{-1}e^{-c_2 s}),
\end{align}
where $c_1$ and $c_2$ are  positive constants not depending on $s$.
Considering the estimates of $\delta\bff_+$ in \eqref{eq:hollder5}, the expression of $\mathcal{T}_{\bnu_M}\bu_0$ in \eqref{eq:esti2_1} and Lemma~\ref{10-lem:u0 int1},
we get
\begin{align}\label{eq:R1'}
\big|R_4\big|\leq\,c_4\,\int_0^h r^{\alpha_++1}e^{-s\sqrt{r}\cos\frac{\theta_M}{2}}\,\mathrm{d}r= \Oh(s^{-2\alpha_+-2} ).
\end{align}
Similarly, we get the following estimates
\begin{align}
&\big|R_5\big|= \Oh(s^{-2\alpha_--2} ),\quad \big|R_{6}\big|= \Oh(\tau^{-2\beta_+-2} ),\quad 
\big|R_{7}\big|= \Oh(\tau^{-2\beta_--2} ),\nonumber\\ 
&\big|R_8\big|=\Oh(\tau^{-2} ), \quad \qquad \big|R_9\big|=\Oh(\tau^{-2} ).\quad \label{eq:R2}
\end{align}
For the term $R_3$, by virtue of  Cauchy-Schwarz inequality, trace theorem and Lemma~\ref{10-lem:23}, we obtain
\begin{align}
\big|R_3  \big|&\leq \|\bu_0\|_{L^2(\Lambda_h)^2}\|\mathcal{T}_{\bnu}(\Re\bv-\Re\bw)\|_{L^2(\Lambda_h)^2}+\|\Re\bv-\Re\bw\|_{L^2(\Lambda_h)^2}\|\mathcal{T}_{\bnu} \bu_0\|_{L^2(\Lambda_h)^2}\nonumber\\
&\leq \Big( \|\bu_0\|_{L^2(\Lambda_h)^2}+\|\mathcal{T}_{\bnu}\bu_0\|_{L^2(\Lambda_h)^2}\Big)\big\|\bv-\bw\big\|_{H^1(\mathcal{C}_h)^2}\nonumber\\
&\leq c_3\big\|\bu_0\big\|_{H^1(\mathcal{C}_h)^2}\leq c_3 e^{-s \sqrt{h}\delta_{\mathcal{K}}},\label{eq:R4}
\end{align}
where $c_3$ is positive and $\delta_{\mathcal{K}}$ is defined in \eqref{eq:lame6}.
Together \eqref{eq:R1}--\eqref{eq:R4} with the identity~\eqref{eq:iden1}, as $s\rightarrow +\infty$, one clearly implies that
\begin{equation*}
\begin{bmatrix}+\mathrm{i}\\-1\end{bmatrix}\cdot\Big(\Re\bff_+({\bf 0})-\Re\bff_-({\bf 0})\Big)=0.
\end{equation*}
Since $\Re\bff_+({\bf 0})$ and $\Re\bff_-({\bf 0})$ belong to $\mathbb{R}$, we derive that
\beq\label{eq:R6}
\Re\bff_+({\bf 0})=\Re\bff_-({\bf 0}).
\eeq
Hence, it implies that \eqref{eq:local2} holds.

Moreover, since $\nabla\bff_+({\bf 0})=\nabla\bff_({\bf 0})={\bf 0}$, by substituting \eqref{eq:R6} into \eqref{eq:iden1}, and subsequently multiplying the new equation by $s^2$, we arrive at
{\small \begin{align*}
&\mu\, s^2\Re\bff_+({\bf 0})\cdot \begin{bmatrix}\mathrm{i}\\-1\end{bmatrix}\bigg(e^{-s\sqrt{h}\,\hat{\mu}(\theta_M)}-e^{-s\sqrt{h}\,\hat{\mu}(\theta_m)}\bigg)-2\begin{bmatrix}1\\\mathrm{i}\end{bmatrix}\cdot\bigg(\frac{\Re\bg_+({\bf 0})}{\hat{\mu}^2(\theta_M)}+\frac{\Re\bg_-({\bf 0})}{\hat{\mu}^2(\theta_m)}\bigg)=\sum\limits_{j=1}^{7}s^2\,R_j,
\end{align*}}
 Let $s$ tend to $+\infty$, we have
\begin{align*}
\begin{bmatrix} 1\\ \mathrm{i}\end{bmatrix} \cdot\bigg(\Re\bg_+({\bf 0})\hat{\mu}^{-2}(\theta_M)+\Re\bg_-({\bf 0}) \hat{\mu}^{-2}(\theta_m)\bigg) =0.
\end{align*}
Noting that $\hat{\mu}^{-2}(\theta_M)\neq \hat{\mu}^{-2}(\theta_m)$ and $\frac{\hat{\mu}^2(\theta_M)}{\hat{\mu}^2(\theta_m)}=e^{\mathrm{i}(\theta_M-\theta_m)}$, then let $\Re\bg_+({\bf 0}):=\begin{bmatrix} a_{11}\\a_{21}\end{bmatrix}$ and $\Re\bg_-({\bf 0}):=\begin{bmatrix} a_{12}\\a_{22}\end{bmatrix}$, where $a_{ij}\in \mathbb{R}$ for $i,j=1,2$. So the above equation can be rewritten as follows
\allowdisplaybreaks
\beq\nonumber
\begin{cases}
a_{11}+a_{12}\cos(\theta_M-\theta_m)-a_{22}\sin(\theta_M-\theta_m) =0,\\
a_{21}+a_{12}\sin(\theta_M-\theta_m)+a_{22}\cos(\theta_M-\theta_m) =0,
\end{cases}
\eeq
namely,
$$
\Re\bg_+({\bf 0})=W \Re\bg_-({\bf 0}).
$$
Therefore, we obtain the identity \eqnref{eq:local2'}.
Here, 
$$
W=\begin{bmatrix}-\cos(\theta_M-\theta_m),&-\sin(\theta_M-\theta_m)\\
    -\sin(\theta_M-\theta_m), &\cos(\theta_M-\theta_m)
    \end{bmatrix}\quad\mbox{and}\quad \det(W)\neq 0.
$$
The proof is complete.
\end{proof}

\subsection{Local results for 3D case}\label{subse:case2}
As discussed in Remark~4.2 in \cite{DiaoLiuSun2021}, the regularity result on the underlying elastic displacement around a general polyhedral corner in $\mathbb{R}^3$ is challenging. So in this subsection, we shall restrict ourselves to the 3D edge corner in the usual sense as the one for the 2D case. 
We introduce a dimension reduction operator $\mathcal{P}$ to study the relevant continuities about jump functions $\bff$ and $\bg$ at the 3D edge corner. In what follows, we suppose that the dislocation $\mathcal{S}$ is a Lipschitz surface possessing at least one 3D edge corner point $\bx_c=(\bx'_c,x^3_c)^\top\in\mathcal{S}\subset\mathbb{R}^3$.

The next definition shall state a dimension reduction operator, which is beneficial to derive a crucial auxiliary proposition similar to Proposition~\ref {prop:trans2} at a 3D edge corner.

\begin{defn}\label{def:operator}
Let $\mathcal{C}'_{\bx'_c,h}\subset \mathbb{R}^2$ be defined in \eqref{eq:ball1} with the vertex $\bx'_c$ and $M>0$. Consider a given function $\phi\in\mathcal{C}'_{\bx'_c,h}\times (-M,M)$ and pick up any point $x^3_c\in (-M,M)$. For a sufficiently small constant $L$ satisfying $(x^3_c-L,x^3_c+L)\subset (-M,M)$, we assume that $\phi\in C^\infty_0(x^3_c-L,x^3_c+L)$ is a nonnegative function and $\phi \not\equiv 0$. Write $\bx=(\bx',x_3)^\top\in \mathbb{R}^3$. The dimension reduction operator $\mathcal{P}$ is defined as follows
\beq\nonumber
\mathcal{P}(\bh)(\bx')=\int_{x^3_c-L}^{x_c^3+L}\phi(x_3)\bh(\bx',x_3)\mathrm{d}x_3.
\eeq
\end{defn}
Before deriving the main results of this subsection, we review some important properties of such an operator.
\begin{lem}\cite[Lemma~3.1]{DiaoLiuSun2021}
Let $\bh\in H^m(\mathcal{C}'_{\bx'_c,h}\times (-M,M))^3$, $m=1,2$. Then
\beq\nonumber
\mathcal{P}(\bh)(\bx')\in H^m(\mathcal{C}'_{\bx'_c,h}))^3.
\eeq
Similarly, if $\bh\in C^\delta\big(\overline{\mathcal{C}'_{\bx'_c,h}}\times [-M,M]\big)^3$ with $\delta\in (0,1)$, then
\beq\nonumber
\mathcal{P}(\bh)(\bx')\in C^\delta\big(\overline{\mathcal{C}'_{\bx'_c,h}}\big)^3.
\eeq
\end{lem}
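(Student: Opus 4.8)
The plan is to reduce everything to one-dimensional considerations in the $x_3$ variable, treating $\bx'$ as a parameter, and then to integrate in $\bx'$. First I would treat the $H^m$ statement. Since $\phi\in C_0^\infty(x_c^3-L,x_c^3+L)$, differentiation under the integral sign is justified, and for a multi-index $\beta'$ in the $\bx'$ variables with $|\beta'|\le m$ we have $\partial^{\beta'}_{\bx'}\mathcal{P}(\bh)(\bx')=\int \phi(x_3)\,\partial^{\beta'}_{\bx'}\bh(\bx',x_3)\,\mathrm{d}x_3$. Applying the Cauchy--Schwarz inequality in $x_3$ against the fixed weight $\phi$ and then integrating the resulting inequality over $\mathcal{C}'_{\bx'_c,h}$ gives
\[
\|\partial^{\beta'}_{\bx'}\mathcal{P}(\bh)\|_{L^2(\mathcal{C}'_{\bx'_c,h})}^2\le \|\phi\|_{L^1(x_c^3-L,x_c^3+L)}\,\|\phi\|_{L^\infty}\,\|\partial^{\beta'}_{\bx'}\bh\|_{L^2(\mathcal{C}'_{\bx'_c,h}\times(-M,M))}^2,
\]
so summing over $|\beta'|\le m$ yields $\mathcal{P}(\bh)\in H^m(\mathcal{C}'_{\bx'_c,h})^3$ with a norm bound controlled by $\|\bh\|_{H^m}$. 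One slightly delicate point is that only derivatives in $\bx'$ can be pulled inside the integral this way, but that is exactly what is needed because $\mathcal{P}(\bh)$ is a function of $\bx'$ alone; no $x_3$-derivatives of $\bh$ enter, so no loss of regularity occurs and in fact only $\partial^{\beta'}_{\bx'}\bh$ with $|\beta'|\le m$ are required.

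For the H\"older statement, I would again work pointwise in $\bx'$. Given $\bx'_1,\bx'_2\in\overline{\mathcal{C}'_{\bx'_c,h}}$, write
\[
\mathcal{P}(\bh)(\bx'_1)-\mathcal{P}(\bh)(\bx'_2)=\int_{x_c^3-L}^{x_c^3+L}\phi(x_3)\bigl(\bh(\bx'_1,x_3)-\bh(\bx'_2,x_3)\bigr)\mathrm{d}x_3,
\]
and estimate the integrand by $\|\bh\|_{C^\delta}\,|\bx'_1-\bx'_2|^\delta$ uniformly in $x_3$; integrating against $\phi$ gives $|\mathcal{P}(\bh)(\bx'_1)-\mathcal{P}(\bh)(\bx'_2)|\le \|\phi\|_{L^1}\,\|\bh\|_{C^\delta}\,|\bx'_1-\bx'_2|^\delta$. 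A similar (simpler) bound gives $\|\mathcal{P}(\bh)\|_{L^\infty(\overline{\mathcal{C}'_{\bx'_c,h}})}\le\|\phi\|_{L^1}\|\bh\|_{C^0}$, so $\mathcal{P}(\bh)\in C^\delta(\overline{\mathcal{C}'_{\bx'_c,h}})^3$. Continuity of $\mathcal{P}(\bh)$ itself follows from dominated convergence, the dominating function being $\|\phi\|_{L^\infty}\|\bh\|_{C^0}\mathbf{1}_{(x_c^3-L,x_c^3+L)}$.

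There is no real obstacle here; the only things to be careful about are (i) justifying differentiation under the integral sign, which is immediate from $\phi\in C_0^\infty$ and $\bh\in H^m$ via approximation by smooth functions and density, and (ii) making sure the constants depend only on $\phi$, $L$, $M$ and the fixed domain, not on $\bh$. The entire argument is a routine Fubini/Cauchy--Schwarz computation; I would present it compactly in one paragraph per regularity class, citing \cite[Lemma~3.1]{DiaoLiuSun2021} as the source since the statement is quoted verbatim from there.
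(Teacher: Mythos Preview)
Your proposal is correct. Note that the paper does not supply its own proof of this lemma at all: it is simply quoted from \cite[Lemma~3.1]{DiaoLiuSun2021} and used as a black box, so there is nothing to compare against. Your argument---differentiating under the integral sign (justified by the compact support and smoothness of $\phi$), applying Cauchy--Schwarz in $x_3$, and integrating in $\bx'$ for the $H^m$ case; and a direct pointwise H\"older estimate for the $C^\delta$ case---is exactly the standard route and is sound as written.
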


Noting that the three-dimensional isotropic elastic operator $\mathcal{L}$ defined in \eqref{eq:tensor1}--\eqref{eq:op1} can be rewritten as
{\small\begin{align*}
\mathcal{L}&=\begin{bmatrix}
\lambda\Delta +(\lambda+\mu)\partial_1^2&(\lambda+\mu)\partial_1\partial_2&(\lambda+\mu)\partial_1\partial_3\\
(\lambda+\mu)\partial_1\partial_2&\lambda\Delta +(\lambda+\mu)\partial_2^2&(\lambda+\mu)\partial_2\partial_3\\
(\lambda+\mu)\partial_1\partial_3&(\lambda+\mu)\partial_2\partial_3&\lambda\Delta +(\lambda+\mu)\partial_3^2
\end{bmatrix}\\[5mm]
&=\widetilde{\mathcal{L}}+
\begin{bmatrix}
\lambda\partial^2_3&0&(\lambda+\mu)\partial_1\partial_3\\
0&\lambda\partial_3^2&(\lambda+\mu)\partial_2\partial_3\\
(\lambda+\mu)\partial_1\partial_3&(\lambda+\mu)\partial_2\partial_3&\lambda\partial_3^2+(\lambda+\mu)\partial_3^2
\end{bmatrix},
\end{align*}}
where
\beq\label{eq:L}
\widetilde{\mathcal{L}}=\begin{bmatrix}
\lambda\Delta'+(\lambda+\mu)\partial_1^2&(\lambda+\mu)\partial_1\partial_2&0\\
(\lambda+\mu)\partial_1\partial_2&\lambda\Delta'+(\lambda+\mu)\partial_2^2&0\\
0&0&\lambda\Delta'
\end{bmatrix}=\begin{bmatrix}
\mathcal{L}_{\mathcal{P}}&0\\
0&\lambda\Delta'
\end{bmatrix}
\eeq
with $\Delta'=\partial_1^2+\partial_2^2$ being the Laplace operator with respect to the $\bx'$-variables. Here, the operator $\mathcal{L}_{\mathcal{P}}$ is the two-dimensional isotropic elastic operator with respect to the $\bx'$-variables.

For any fixed $x_c^3\in (-M, M)$ and sufficient small $L>0$ such that $(x_c^3-L,x_c^3+L)\subset (-M,M)$. At this moment, we have $\mathcal{L}=\widetilde{\mathcal{L}}$. Since $\mathcal L$ is invariant under rigid motion, in what follows we assume that $\bx'_c=\mathbf 0$ in $\mathbb R^2.$ Hence let $\mathcal{C}'_{\bx'_c,h}$ and $\Gamma^\pm_{\bx'_c,h}$ be defined in \eqref{eq:ball1} with $\bx'_c$ coinciding with the origin in 2D case. By some tedious calculations, we have the following lemma.
\begin{lem}\label{lem:R3_1}
Under the setup about $\mathcal{C}'_{\bx'_c,h}$ and a 3D edge corner as above, suppose that $\bff_j\in C^{1,\alpha_j}\big(\Gamma'^j_{\bx'_c,h}\times (-M,M)\big)^3\bigcap H^{\frac{1}{2}}\big(\Gamma'^j_{\bx'_c,h}\times (-M,M)\big)^3$  and $\bg_j\in C^{\beta_j}\big(\Gamma'^j_{\bx'_c,h}\times (-M,M)\big)^3\bigcap H^{-\frac{1}{2}}\big(\Gamma'^j_{\bx'_c,h}\times (-M,M)\big)^3$ do not depend on $x_3$ for  $j=+,-$, where $\alpha_+$, $\alpha_-$, $\beta_+$, $\beta_- \in (0,1)$. Denote $\bv=(\bv^{(1,2)}, \,\, v_3)^\top$, $\bw=(\bw^{(1,2)},\,\, w_3)^\top$, $\bff_\pm=(\bff^{(1,2)}_\pm,\,\, f^3_\pm)^\top$ and $\bg_\pm=(\bg^{(1,2)}_\pm,\,\, g^3_\pm)^\top$. Then the  problem for $(\bv,\bw)\in H^1\big(\mathcal{C}'_{\bx'_c,h}\times (-M,M)\big)^3\times  H^1\big(\mathcal{C}'_{\bx'_c,h}\times (-M,M)\big)^3:$
\allowdisplaybreaks
\begin{equation*}
\begin{cases}
\mathcal{L}\,\bv=\mathbf{0},\hspace{15pt} \mathcal{L}\,\bw=\mathbf{0}  &\mbox{in}\quad \mathcal{C}'_{\bx'_c,h}\times (-M,M),\\[2mm]
\bv-\bw=\bff_+,\,\mathcal{T}_{\bnu}\,\bv-\mathcal{T}_{\bnu}\,\bw=\bg_+ &\mbox{on}\quad\Gamma'^+_{\bx'_c,h}\times (-M,M),\\[2mm]
\bv-\bw=\bff_-,\,\mathcal{T}_{\bnu}\,\bv-\mathcal{T}_{\bnu}\,\bw=\bg_- &\mbox{on}\quad\Gamma'^-_{\bx'_c,h}\times (-M,M)
\end{cases}
\end{equation*}
can be reduced  to be
\begin{equation}\label{eq:trans3D'}
\begin{cases}
\widetilde{\mathcal{L}}\,\mathcal{P}(\bv)(\bx')=\mathcal{G}_1(\bx'),\hspace{8pt} \widetilde{\mathcal{L}}\,\mathcal{P}(\bw)(\bx')=\mathcal{G}_2(\bx'),  &\bx'\in \mathcal{C}'_{\bx'_c,h},\\[2mm]
\mathcal{P}(\bv)(\bx')=\mathcal{P}(\bw)(\bx')+\mathcal{P}(\bff_+)(\bx'), &\bx'\in \Gamma'^+_{\bx'_c,h},\\[2mm]
\mathcal{P}(\bv)(\bx')=\mathcal{P}(\bw)(\bx')+\mathcal{P}(\bff_-)(\bx'), &\bx'\in \Gamma'^-_{\bx'_c,h},\\[2mm]
\mathcal{R}^+_1
=\mathcal{R}^+_2+\mathcal{P}(\bg_+)(\bx'),&\bx'\in \Gamma'^+_{\bx'_c,h},\\[2mm]
\mathcal{R}^-_1
=\mathcal{R}^-_2+\mathcal{P}(\bg_-)(\bx'),&\bx'\in \Gamma'^-_{\bx'_c,h},
\end{cases}
\end{equation}
where
{\small\begin{align*}
&\mathcal{G}_1=-\int^{x_c^3+L}_{x_c^3-L}\phi''(x_3)\begin{bmatrix} \lambda \bv^{(1,2)}(\bx)\\ (2\lambda+\mu)v_3(\bx)  \end{bmatrix}\mathrm{d}x_3+(\lambda+\mu)\int_{x_c^3-L}^{x_c^3+L}\phi'(x_3)\begin{bmatrix} \nabla v_3(\bx)\\ \partial_1 v_1(\bx)+\partial_2 v_2(\bx)  \end{bmatrix}\mathrm{d}x_3,\\[3mm]
&\mathcal{G}_2=-\int^{x_c^3+L}_{x_c^3-L}\phi''(x_3)\begin{bmatrix} \lambda \bw^{(1,2)}(\bx)\\ (2\lambda+\mu)w_3(\bx)  \end{bmatrix}\mathrm{d}x_3+(\lambda+\mu)\int_{x_c^3-L}^{x_c^3+L}\phi'(x_3)\begin{bmatrix} \nabla w_3(\bx)\\ \partial_1 w_1(\bx)+\partial_2 w_2(\bx)  \end{bmatrix}\mathrm{d}x_3,\\[3mm]
&\mathcal{R}^{\mathrm{+}}_1=\begin{bmatrix}
\mathcal{T}_{\bnu_M}\mathcal{P}(\bv^{(1,2)})+\lambda\mathcal{P}(\partial_3 v_3)\bnu_M\\[1mm]
\mu\partial_{\bnu_M}\mathcal{P}(v_3)+\mu\begin{bmatrix}\mathcal{P}(\partial_3 v_1)\\ \mathcal{P}(\partial_3 v_2)
\end{bmatrix}\cdot \bnu_M
\end{bmatrix},\,\mathcal{R}^{\mathrm{+}}_2=\begin{bmatrix}
\mathcal{T}_{\bnu_M}\mathcal{P}(\bw^{(1,2)})+\lambda\mathcal{P}(\partial_3 w_3)\bnu_M\\[1mm]
\mu\partial_{\bnu_M}\mathcal{P}(w_3)+\mu\begin{bmatrix}\mathcal{P}(\partial_3 w_1)\\ \mathcal{P}(\partial_3w_2)
\end{bmatrix}\cdot \bnu_M
\end{bmatrix} \,\mbox{on}\, \Gamma'^{\mathrm{+}}_{\bx'_c,h},
\\[3mm]
&\mathcal{R}^{\mathrm{-}}_1=\begin{bmatrix}
\mathcal{T}_{\bnu_m}\mathcal{P}(\bv^{(1,2)})+\lambda\mathcal{P}(\partial_3 v_3)\bnu_m\\[1mm]
\mu\partial_{\bnu_m}\mathcal{P}(v_3)+\mu\begin{bmatrix}\mathcal{P}(\partial_3 v_1)\\ \mathcal{P}(\partial_3 v_2)
\end{bmatrix}\cdot \bnu_m
\end{bmatrix},\,\mathcal{R}^{\mathrm{-}}_2=\begin{bmatrix}
\mathcal{T}_{\bnu_m}\mathcal{P}(\bw^{(1,2)})+\lambda\mathcal{P}(\partial_3 w_3)\bnu_m\\[1mm]
\mu\partial_{\bnu_m}\mathcal{P}(w_3)+\mu\begin{bmatrix}\mathcal{P}(\partial_3 w_1)\\ \mathcal{P}(\partial_3w_2)
\end{bmatrix}\cdot \bnu_m
\end{bmatrix} \,\mbox{on}\, \Gamma'^{\mathrm{-}}_{\bx'_c,h}.
\end{align*}} 
Here, $\bnu_M$ and $\bnu_m$ denote the exterior unit normal vector to $\Gamma'^+_{\bx'_c,h}$ and $\Gamma'^+_{\bx'_c,h}$, respectively, $\mathcal{T}_{\bnu}$ is the two-dimensional boundary traction operator.
\end{lem}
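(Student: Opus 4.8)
\textbf{Proof proposal for Lemma~\ref{lem:R3_1}.}

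The plan is to apply the dimension reduction operator $\mathcal{P}$ componentwise to each of the five relations in the original 3D transmission problem and to track carefully how $\mathcal{P}$ interacts with the differential operators $\mathcal{L}$ and $\mathcal{T}_{\bnu}$. First I would treat the PDE $\mathcal{L}\,\bv = \mathbf 0$. Writing $\mathcal{L} = \widetilde{\mathcal{L}} + \mathcal{L}_3$, where $\mathcal{L}_3$ collects all the terms in the decomposition above that contain a $\partial_3$, and recalling that $\widetilde{\mathcal{L}}$ acts only on the $\bx'$-variables while $\mathcal{P}$ integrates out $x_3$, we have $\widetilde{\mathcal{L}}\,\mathcal{P}(\bv)(\bx') = \mathcal{P}(\widetilde{\mathcal{L}}\,\bv)(\bx') = -\mathcal{P}(\mathcal{L}_3 \bv)(\bx')$. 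The right-hand side is then rewritten by integrating by parts in $x_3$: every $\partial_3^2$ hitting a component of $\bv$ is moved onto $\phi$, producing a factor $\phi''(x_3)$, and every single $\partial_3$ (the mixed terms $(\lambda+\mu)\partial_i\partial_3$) is moved onto $\phi$, producing a factor $\phi'(x_3)$. The boundary terms at $x_3 = x_c^3 \pm L$ vanish because $\phi\in C_0^\infty(x_c^3-L,x_c^3+L)$. Collecting the surviving terms and sorting the first two components from the third component according to the block structure in \eqref{eq:L} gives exactly the stated expression for $\mathcal{G}_1$; the same computation with $\bw$ in place of $\bv$ gives $\mathcal{G}_2$.

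Next I would handle the Dirichlet-type jump conditions. These are the easiest: since $\mathcal{P}$ is a linear operator acting on the $x_3$-variable and the traces onto $\Gamma'^\pm_{\bx'_c,h}\times(-M,M)$ commute with integration in $x_3$, applying $\mathcal{P}$ to $\bv - \bw = \bff_\pm$ yields $\mathcal{P}(\bv) = \mathcal{P}(\bw) + \mathcal{P}(\bff_\pm)$ on $\Gamma'^\pm_{\bx'_c,h}$, with the regularity of the reduced quantities guaranteed by the lemma on properties of $\mathcal{P}$ cited just above. For the traction jump conditions one must be more careful: the 3D traction operator $\mathcal{T}_{\bnu}$ on a surface of the form $\Gamma'^\pm_{\bx'_c,h}\times(-M,M)$ has normal $\bnu = (\bnu_M, 0)^\top$ (resp.\ $(\bnu_m,0)^\top$) with no $x_3$-component. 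Expanding $\bnu\cdot(\mathbb{C}:\nabla\bv)$ in coordinates, the $\partial_3$-derivatives of $\bv$ appear only through terms of the form $\lambda(\partial_3 v_3)\bnu$ in the first two components and $\mu(\partial_3 v_1)\nu_1 + \mu(\partial_3 v_2)\nu_2$ in the third component, while all the $\partial_1,\partial_2$ derivatives assemble into the two-dimensional traction operator $\mathcal{T}_{\bnu}$ acting on the $\bx'$-part. Since $\mathcal{P}$ commutes with $\partial_1,\partial_2$ and with multiplication by the constants $\lambda,\mu$, applying $\mathcal{P}$ to $\mathcal{T}_{\bnu}\bv$ produces precisely $\mathcal{R}^\pm_1$; likewise for $\bw$ and $\mathcal{R}^\pm_2$. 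Subtracting and using $\mathcal{P}(\mathcal{T}_{\bnu}\bv - \mathcal{T}_{\bnu}\bw) = \mathcal{P}(\bg_\pm)$ gives the last two lines of \eqref{eq:trans3D'}; here the hypothesis that $\bff_\pm, \bg_\pm$ are independent of $x_3$ is what lets us write $\mathcal{P}(\bff_\pm) = \left(\int\phi\right)\bff_\pm$ and similarly for $\bg_\pm$, keeping the reduced data genuinely two-dimensional.

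The main obstacle is purely bookkeeping rather than conceptual: one has to correctly separate the block $\mathcal{L}_{\mathcal{P}}$ (acting on $\bv^{(1,2)}$) from the scalar block $\lambda\Delta'$ (acting on $v_3$) in $\widetilde{\mathcal{L}}$, and simultaneously keep straight which $\partial_3$-terms in the full operator $\mathcal{L}$ land in the first-two-component row versus the third row, so that the integration-by-parts in $x_3$ deposits the factors $\phi''$ and $\phi'$ against the correct components of $\bv$ and $\bw$. A secondary point requiring care is justifying the integration by parts and the commutation of $\mathcal{P}$ with the trace operators at the $H^1$ level: this follows from the cited regularity lemma for $\mathcal{P}$ together with a density argument approximating $\bv,\bw\in H^1$ by smooth functions, and from the fact that the traces onto $\Gamma'^\pm_{\bx'_c,h}\times(-M,M)$ are well defined in $H^{1/2}$ by the trace theorem. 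Once these are in place, the identities in \eqref{eq:trans3D'} follow by direct substitution, completing the proof.
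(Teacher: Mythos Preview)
Your proposal is correct and matches the paper's approach: the paper itself offers no detailed argument for this lemma, stating only that it follows ``by some tedious calculations,'' and your outline---apply $\mathcal{P}$ to each relation, commute it with the $\bx'$-part $\widetilde{\mathcal{L}}$, and integrate by parts in $x_3$ to transfer all $\partial_3$'s onto $\phi$ (using $\phi\in C_0^\infty$ to kill boundary terms), then expand the 3D traction on a surface with normal $(\bnu_M,0)^\top$ or $(\bnu_m,0)^\top$---is exactly that calculation made explicit. The only small remark is that the hypothesis that $\bff_\pm,\bg_\pm$ are independent of $x_3$ is not actually needed to derive \eqref{eq:trans3D'} as stated (the reduced data are written as $\mathcal{P}(\bff_\pm),\mathcal{P}(\bg_\pm)$, not as constants times $\bff_\pm,\bg_\pm$); that hypothesis is used only downstream in Proposition~\ref{prop:trans4}.
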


By applying the decomposition of $\widetilde{\mathcal{L}}$ given in \eqref{eq:L}, it is direct to obtain the following results. Here, we omit the proof.

\begin{lem}
Under the same setup in Lemma~\ref{lem:R3_1}, the PDE system \eqref{eq:trans3D'} is equivalent to the following two PDE systems
\begin{equation}\label{eq:trans3D_1}
\begin{cases}
\mathcal{L}_{\mathcal{P}}\,\mathcal{P}(\bv^{(1,2)})(\bx')=\mathcal{G}^{(1,2)}_1(\bx')\quad &\mbox{in}\quad \mathcal{C}'_{\bx'_c,h},\\[2mm]
\mathcal{L}_{\mathcal{P}}\,\mathcal{P}(\bw^{(1,2)})(\bx')=\mathcal{G}^{(1,2)}_2(\bx') \quad &\mbox{in}\quad \mathcal{C}'_{\bx'_c,h},\\[2mm]
\mathcal{P}(\bv^{(1,2)})(\bx')=\mathcal{P}(\bw^{(1,2)})(\bx')+\mathcal{P}(\bff_+^{(1,2)})(\bx') \quad&\mbox{on}\quad \Gamma'^+_{\bx'_c,h},\\[2mm]
\mathcal{P}(\bv^{(1,2)})(\bx')=\mathcal{P}(\bw^{(1,2)})(\bx')+\mathcal{P}(\bff_-^{(1,2)})(\bx') \quad&\mbox{on}\quad \Gamma'^-_{\bx'_c,h},\\[2mm]
\mathcal{R}^{(1,2)}_1
=\mathcal{R}^{(1,2)}_2+\mathcal{P}(\bg_+^{(1,2)})(\bx')\quad&\mbox{on}\quad \Gamma'^+_{\bx'_c,h},\\[2mm]
\mathcal{R}^{(1,2)}_1
=\mathcal{R}^{(1,2)}_2+\mathcal{P}(\bg_-^{(1,2)})(\bx')\quad&\mbox{on}\quad \Gamma'^-_{\bx'_c,h}
\end{cases}
\end{equation}
and
\begin{equation}\label{eq:trans3D_2}
\begin{cases}
\lambda\Delta'\,\mathcal{P}(v_3)(\bx')=\mathcal{G}^{(3)}_1(\bx')&\mbox{in}\quad  \mathcal{C}'_{\bx'_c,h},\\[2mm] \lambda\Delta'\,\mathcal{P}(w_3)(\bx')=\mathcal{G}^{(3)}_2(\bx')  &\mbox{in}\quad  \mathcal{C}'_{\bx'_c,h},\\[2mm]
\mathcal{P}(v_3)(\bx')=\mathcal{P}(w_3)(\bx')+\mathcal{P}(f^3_+)(\bx')&\mbox{on}\quad  \Gamma'^+_{\bx'_c,h},\\[2mm]
\mathcal{P}(v_3)(\bx')=\mathcal{P}(w_3)(\bx')+\mathcal{P}(f^3_-)(\bx')&\mbox{on}\quad  \Gamma'^-_{\bx'_c,h},\\[2mm]
\partial_{\bnu_M}\mathcal{P}(v_3)(\bx')
=\partial_{\bnu_M}\mathcal{P}(w_3)(\bx')+\frac{1}{\mu}\mathcal{P}(g^3_+)(\bx')\qquad\quad&\mbox{on}\quad  \Gamma'^+_{\bx'_c,h},\\[2mm]
\partial_{\bnu_m}\mathcal{P}(v_3)(\bx')
=\partial_{\bnu_m}\mathcal{P}(w_3)(\bx')+\frac{1}{\mu}\mathcal{P}(g^3_-)(\bx')\qquad\quad&\mbox{on}\quad  \Gamma'^-_{\bx'_c,h},
\end{cases}
\end{equation}where
\allowdisplaybreaks

\begin{align*}
&\mathcal{G}^{(1,2)}_1=-\lambda \int^{x_c^3+L}_{x_c^3-L}\phi''(x_3) \bv^{(1,2)}(\bx)\mathrm{d}x_3+(\lambda+\mu)\int_{x_c^3-L}^{x_c^3+L}\phi'(x_3)\nabla v_3(\bx)\mathrm{d}x_3,\\[1mm]
&\mathcal{G}^{(1,2)}_2=- \lambda \int^{x_c^3+L}_{x_c^3-L}\phi''(x_3) \bw^{(1,2)}(\bx)\mathrm{d}x_3+(\lambda+\mu)\int_{x_c^3-L}^{x_c^3+L}\phi'(x_3)\nabla w_3(\bx)\mathrm{d}x_3,\\[1mm]
&\mathcal{G}^{(3)}_1=- (2\lambda+\mu)\int^{x_c^3+L}_{x_c^3-L}\phi''(x_3) v_3(\bx)\mathrm{d}x_3+(\lambda+\mu)\int_{x_c^3-L}^{x_c^3+L}\phi'(x_3)(\partial_1 v_1+\partial_2 v_2) \mathrm{d}x_3,\\[1mm]
&\mathcal{G}^{(3)}_2=- (2\lambda+\mu)\int^{x_c^3+L}_{x_c^3-L}\phi''(x_3) w_3(\bx)\mathrm{d}x_3+(\lambda+\mu)\int_{x_c^3-L}^{x_c^3+L}\phi'(x_3)(\partial_1 w_1+\partial_2 w_2) \mathrm{d}x_3,\\[1mm]
&\mathcal{R}^{+(1,2)}_1=\mathcal{T}_{\bnu_M}\mathcal{P}(\bv^{(1,2)})+\lambda\mathcal{P}(\partial_3 v_3)\,\bnu_M,\,\, \mathcal{R}^{+(1,2)}_2=\mathcal{T}_{\bnu_M}\mathcal{P}(\bw^{(1,2)})+\lambda\mathcal{P}(\partial_3 w_3) \hspace{10pt}\mbox{on}\,\,  \Gamma'^+_{\bx'_c,h}, \\[1mm]
&\mathcal{R}^{-(1,2)}_1=\mathcal{T}_{\bnu_m}\mathcal{P}(\bv^{(1,2)})+\lambda\mathcal{P}(\partial_3 v_3)\,\bnu_m,\,\, \, \mathcal{R}^{-(1,2)}_2=\mathcal{T}_{\bnu_m}\mathcal{P}(\bw^{(1,2)})+\lambda\mathcal{P}(\partial_3 w_3) \hspace{12pt}\mbox{on}\,\,  \Gamma'^-_{\bx'_c,h}.
\end{align*}
\end{lem}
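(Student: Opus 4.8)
The plan is to exploit the block-diagonal structure of $\widetilde{\mathcal{L}}$ recorded in \eqref{eq:L}, namely $\widetilde{\mathcal{L}}=\mathrm{diag}\big(\mathcal{L}_{\mathcal{P}},\,\lambda\Delta'\big)$, and to split every equation and transmission relation in \eqref{eq:trans3D'} into its first two components and its third component. First I would address the interior equations: writing $\mathcal{P}(\bv)=\big(\mathcal{P}(\bv^{(1,2)})^\top,\,\mathcal{P}(v_3)\big)^\top$ and accordingly $\mathcal{G}_1=\big(\mathcal{G}^{(1,2)\top}_1,\,\mathcal{G}^{(3)}_1\big)^\top$, the identity $\widetilde{\mathcal{L}}\,\mathcal{P}(\bv)=\mathcal{G}_1$ in $\mathcal{C}'_{\bx'_c,h}$ is, thanks to the block form, exactly the pair $\mathcal{L}_{\mathcal{P}}\mathcal{P}(\bv^{(1,2)})=\mathcal{G}^{(1,2)}_1$ and $\lambda\Delta'\mathcal{P}(v_3)=\mathcal{G}^{(3)}_1$. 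The explicit shapes of $\mathcal{G}^{(1,2)}_1$ and $\mathcal{G}^{(3)}_1$ are then just the first two rows and the third row of the expression for $\mathcal{G}_1$ in Lemma~\ref{lem:R3_1}, obtained by separating $[\lambda\bv^{(1,2)};(2\lambda+\mu)v_3]$ and $[\nabla v_3;\,\partial_1 v_1+\partial_2 v_2]$ into their $(1,2)$- and $3$-parts; the same is carried out for $\bw$ and $\mathcal{G}_2$.

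Next I would handle the relations on $\Gamma'^{\pm}_{\bx'_c,h}$. The Dirichlet-type identities $\mathcal{P}(\bv)=\mathcal{P}(\bw)+\mathcal{P}(\bff_\pm)$ split componentwise with no coupling, giving the $\mathcal{P}(\bv^{(1,2)})$ and $\mathcal{P}(v_3)$ conditions of \eqref{eq:trans3D_1} and \eqref{eq:trans3D_2} at once. For the traction-type identities $\mathcal{R}^{\pm}_1=\mathcal{R}^{\pm}_2+\mathcal{P}(\bg_\pm)$, the first two rows immediately give $\mathcal{R}^{\pm(1,2)}_1=\mathcal{R}^{\pm(1,2)}_2+\mathcal{P}(\bg^{(1,2)}_\pm)$ with the $\mathcal{R}^{\pm(1,2)}_i$ of \eqref{eq:trans3D_1}. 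For the third row, $\mathcal{R}^{\pm}_1-\mathcal{R}^{\pm}_2$ contains, besides $\mu\,\partial_{\bnu}\big(\mathcal{P}(v_3)-\mathcal{P}(w_3)\big)$, the cross term $\mu\big[\mathcal{P}(\partial_3(v_1-w_1));\,\mathcal{P}(\partial_3(v_2-w_2))\big]\cdot\bnu$; here I would invoke that on $\Gamma'^{\pm}_{\bx'_c,h}\times(-M,M)$ one has $\bv-\bw=\bff_\pm$ and that $\bff_\pm$ is independent of $x_3$ by hypothesis, so $\partial_3(v_i-w_i)=0$ there and the cross term is annihilated by $\mathcal{P}$. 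This leaves precisely $\partial_{\bnu}\mathcal{P}(v_3)=\partial_{\bnu}\mathcal{P}(w_3)+\frac{1}{\mu}\mathcal{P}(g^3_\pm)$, i.e.\ the last two lines of \eqref{eq:trans3D_2}.

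The converse direction is then purely a matter of reassembly: given $\mathcal{P}(\bv^{(1,2)}),\mathcal{P}(v_3)$ and their $\bw$-counterparts solving \eqref{eq:trans3D_1} and \eqref{eq:trans3D_2}, stacking them into $\mathcal{P}(\bv),\mathcal{P}(\bw)$ and reading the block forms of $\widetilde{\mathcal{L}}$ and of $\mathcal{R}^{\pm}_i$ backwards recovers \eqref{eq:trans3D'}. The regularity of the reduced objects --- $\mathcal{P}(\bv^{(1,2)}),\mathcal{P}(v_3)\in H^1(\mathcal{C}'_{\bx'_c,h})$, $\mathcal{G}^{(1,2)}_i,\mathcal{G}^{(3)}_i\in L^2(\mathcal{C}'_{\bx'_c,h})$, and the H\"older regularity of $\mathcal{P}(\bff_\pm),\mathcal{P}(\bg_\pm)$ on $\Gamma'^{\pm}_{\bx'_c,h}$ --- is inherited from the mapping properties of $\mathcal{P}$ already recorded. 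Essentially all of this is bookkeeping of the block decomposition; the only step that is not purely formal, and the only place where the $x_3$-independence of $\bff_\pm$ is genuinely used, is the vanishing of the $\mathcal{P}(\partial_3(v_i-w_i))$ cross terms in the normal-traction relation for the third component, so that is where I would be most careful.
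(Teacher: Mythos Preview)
Your approach is correct and is precisely the block-diagonal decomposition the paper has in mind; indeed the paper omits the proof entirely, remarking only that it follows directly from the decomposition \eqref{eq:L} of $\widetilde{\mathcal{L}}$. Your identification of the one genuinely nontrivial step---the vanishing of the cross term $\mu\big[\mathcal{P}(\partial_3(v_1-w_1)),\mathcal{P}(\partial_3(v_2-w_2))\big]\cdot\bnu$ on $\Gamma'^{\pm}_{\bx'_c,h}$ via the $x_3$-independence of $\bff_\pm$ (most cleanly seen through $\mathcal{P}(\partial_3 h)=-\int\phi'(x_3)h\,\mathrm{d}x_3$ and $\int\phi'=0$)---is exactly the point one must check, and the rest is indeed bookkeeping.
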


Next, we shall recall another CGO solution $u_0$, which is introduced in \cite{Blasten2018} and exhibits a similar form and properties as those detailed in Lemma~\ref{lem:cgo1}--Lemma~\ref{10-lem:u0 int} (cf. \cite{DiaoCaoLiu2021}). Indeed, this CGO solution is employed to investigate the continuity of $\mathcal{P}(f_j^{3})$ in the conventional sense and $\mathcal{P}(g_j^{3})$ in the rotational sense at a 3D edge corner, where $j=+,-$. To be more precise, \eqref{eq:vansh2} and \eqref{eq:vansh2'} provide further details. For the reader's convenience, we have listed the expression of $u_0$ and its associated properties here.

\begin{lem}\label{lem:cgo2} \cite[Lemma 2.2]{Blasten2018}
Let $\bx'=(x_1,x_2)^\top=r(\cos\theta,\sin\theta)^\top\in \mathbb{R}^2$ and $s\in \mathbb{R}_+$,
\beq\label{cgo2}
u_0(s\bx'):=\exp (-\sqrt{sr}\hat{\mu}(\theta)),
\eeq
where $\hat{\mu}(\cdot)$ is defined in Lemma~\ref{10-lem:u0 int}. Then $s\longmapsto u_0(s\bx')$ decays exponentially in $\mathbb{R}_+$ and
\begin{equation*}
\Delta' u_0=0\quad\mbox{in}\quad \mathbb{R}^2\backslash\mathbb{R}^2_{0,-}
\end{equation*}
where $\mathbb{R}^2_{0,-}:=\{\bx'\in\mathbb{R}^2|\bx'=(x_1,x_2);x_1\leq 0,x_2=0\}$. Moreover,
\begin{equation*}
\int_\mathcal{K_\mathcal{P}} u_0(s\bx'){\rm d}\bx'=6{\rm i}(e^{-2\theta_M{\rm i}}-e^{-2\theta_m \mathrm i})s^{-2}
\end{equation*}
and for $\alpha,\,s>0$ and $h>0$
\allowdisplaybreaks
\begin{align*}
\int_{\mathcal{K}_{\mathcal{P}}}|u_0(s\bx')||\bx'|^{\alpha}{\rm d}\bx'&\leq\frac{2(\theta_M-\theta_m)\Gamma(2\alpha+4)}{\delta_{\mathcal{K}_\mathcal{P}}^{2\alpha+4}}s^{-\alpha-2},\nonumber\\
 \int_{\mathcal{K}_{\mathcal{P}}\backslash B_h}|u_0(s\bx')|{\rm d}\bx'&\leq\frac{6(\theta_M-\theta_m)}{\delta_{\mathcal{K}_\mathcal{P}}^4}s^{-2}e^{-\frac{\sqrt{sh}}{2}\delta_{\mathcal{K}_\mathcal{P}}} , 
\end{align*}
where $\mathcal{K}_\mathcal{P}$ is defined like $\mathcal{K}$ in Section~\ref{sec:1} and $\delta_{\mathcal{K}_\mathcal{P}}=\min\limits_{\theta_m<\theta<\theta_M} \cos \frac{\theta}{2}$ is a positive constant.
\end{lem}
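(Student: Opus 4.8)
The plan is to recognise $u_0(s\,\cdot\,)$ as an essentially explicit holomorphic function and to reduce every assertion to an elementary computation in polar coordinates together with the Gamma integral. First I would write $z:=x_1+\mathrm{i}x_2=re^{\mathrm{i}\theta}$ with the principal branch $\sqrt{z}=\sqrt{r}\,\hat\mu(\theta)$ (recall $\hat\mu(\theta)=e^{\mathrm{i}\theta/2}$), so that
\[
u_0(s\bx')=\exp\!\bigl(-\sqrt{sr}\,\hat\mu(\theta)\bigr)=\exp\!\bigl(-\sqrt{sz}\,\bigr),
\]
the composition of the entire map $\zeta\mapsto e^{-\zeta}$ with the branch $z\mapsto\sqrt{sz}$, which is holomorphic on $\mathbb{C}\setminus(-\infty,0]$. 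Since a holomorphic function $F$ of $z=x_1+\mathrm{i}x_2$ satisfies $\partial_{x_1}^2F+\partial_{x_2}^2F=F''(z)-F''(z)=0$, this yields $\Delta'u_0=0$ on $\mathbb{R}^2\setminus\mathbb{R}^2_{0,-}$, the excluded set $\mathbb{R}^2_{0,-}$ being exactly the pre-image of $(-\infty,0]$. For the decay in $s$, I would note $\Re\bigl(\sqrt{sr}\,\hat\mu(\theta)\bigr)=\sqrt{sr}\cos(\theta/2)$, so $|u_0(s\bx')|=e^{-\sqrt{sr}\cos(\theta/2)}$, and because $\theta_m,\theta_M\in(-\pi,\pi)$ forces $\theta/2\in(-\pi/2,\pi/2)$ and hence $\cos(\theta/2)>0$, for every fixed $\bx'\neq\mathbf0$ in the sector this decays to zero faster than any power of $s$ (indeed at rate $e^{-c\sqrt{s}}$).

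For the integral identity I would pass to polar coordinates on the sector $\mathcal{K}_{\mathcal{P}}=\{\theta_m<\theta<\theta_M,\ r>0\}$, so $\mathrm{d}\bx'=r\,\mathrm{d}r\,\mathrm{d}\theta$, and carry out the substitution $t=\sqrt{sr}$ (i.e. $r=t^2/s$, $\mathrm{d}r=2t\,\mathrm{d}t/s$) in the radial integral:
\[
\int_0^\infty e^{-\sqrt{sr}\,\hat\mu(\theta)}\,r\,\mathrm{d}r=\frac{2}{s^2}\int_0^\infty t^3e^{-\hat\mu(\theta)\,t}\,\mathrm{d}t=\frac{2\,\Gamma(4)}{s^2\,\hat\mu(\theta)^4}=\frac{12}{s^2}\,e^{-2\mathrm{i}\theta},
\]
where the Gamma integral $\int_0^\infty t^3e^{-at}\,\mathrm{d}t=\Gamma(4)/a^4=6/a^4$ is valid since $\Re\hat\mu(\theta)=\cos(\theta/2)>0$, and $\hat\mu(\theta)^{-4}=e^{-2\mathrm{i}\theta}$. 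Integrating over $\theta\in(\theta_m,\theta_M)$ then gives $\frac{12}{s^2}\cdot\frac{e^{-2\mathrm{i}\theta_M}-e^{-2\mathrm{i}\theta_m}}{-2\mathrm{i}}=6\mathrm{i}\,(e^{-2\theta_M\mathrm{i}}-e^{-2\theta_m\mathrm{i}})\,s^{-2}$, which is the claimed formula.

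For the two estimates I would repeat the same change of variables applied to $|u_0(s\bx')|=e^{-\sqrt{sr}\cos(\theta/2)}$. For the weighted bound, the radial integral becomes
\[
\int_0^\infty e^{-\sqrt{sr}\cos(\theta/2)}r^{\alpha+1}\,\mathrm{d}r=\frac{2\,\Gamma(2\alpha+4)}{s^{\alpha+2}\,(\cos(\theta/2))^{2\alpha+4}},
\]
and bounding $\cos(\theta/2)\ge\delta_{\mathcal{K}_{\mathcal{P}}}$ uniformly on the sector, followed by integration in $\theta$, yields the stated bound. For the exterior estimate, on $\mathcal{K}_{\mathcal{P}}\setminus B_h$ one has $r\ge h$, hence $\sqrt{sr}\cos(\theta/2)\ge\tfrac12\sqrt{sh}\,\delta_{\mathcal{K}_{\mathcal{P}}}+\tfrac12\sqrt{sr}\,\delta_{\mathcal{K}_{\mathcal{P}}}$; factoring out $e^{-\frac12\sqrt{sh}\,\delta_{\mathcal{K}_{\mathcal{P}}}}$ and dominating the residual $\int_0^\infty e^{-\frac12\sqrt{sr}\,\delta_{\mathcal{K}_{\mathcal{P}}}}r\,\mathrm{d}r$ by the same Gamma computation gives a bound of the form $C\,(\theta_M-\theta_m)\,\delta_{\mathcal{K}_{\mathcal{P}}}^{-4}\,s^{-2}\,e^{-\frac12\sqrt{sh}\,\delta_{\mathcal{K}_{\mathcal{P}}}}$, of the advertised structure (the exact numerical constant being immaterial and depending only on how one splits the exponent).

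I do not expect a genuine obstacle: the proof is a controlled bookkeeping exercise. The two points that deserve care are, first, keeping the branch of the square root consistent with the definition of $\hat\mu$ and verifying that $\theta_m,\theta_M\in(-\pi,\pi)$ keeps $\Re\hat\mu(\theta)>0$ on the whole sector --- this is precisely what makes all the improper radial integrals (and the use of the Gamma function) converge; and second, justifying the interchange of the $r$- and $\theta$-integrations by Tonelli's theorem, which is immediate once absolute integrability is established through the very estimates being proved. Alternatively one may simply invoke \cite[Lemma~2.2]{Blasten2018} (see also \cite{DiaoCaoLiu2021}), where these statements are recorded.
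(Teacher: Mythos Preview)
Your proposal is correct and, in fact, supplies more than the paper does: the paper states this lemma purely as a citation of \cite[Lemma~2.2]{Blasten2018} and gives no proof of its own. Your explicit computation via the holomorphic representation $u_0(s\bx')=e^{-\sqrt{sz}}$, the substitution $t=\sqrt{sr}$, and the Gamma integral is exactly the standard route, and you correctly flag that the numerical constant in the exterior estimate depends on how one splits the exponent (the structural $s^{-2}e^{-c\sqrt{sh}}$ decay being all that is ever used downstream).
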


\begin{lem}\cite[Lemma 2.4]{DiaoCaoLiu2021}
For any $\alpha>0$, if $\omega(\theta)>0$, then we have
$$
\lim\limits_{s\rightarrow +\infty}\int^h_0 r^{\alpha}e^{-\sqrt{sr}\omega(\theta)}\mathrm{d}\mathrm{r}=\Oh(s^{-\alpha-1}).
$$
\end{lem}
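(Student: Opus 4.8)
The plan is to reduce the $r$-integral to a fixed, $s$-independent convergent integral by a single change of variables, after which the power of $s$ is read off directly and the hypotheses $\alpha>0$, $\omega(\theta)>0$ enter only to guarantee convergence. Since the integration is in $r$ with $\theta$ fixed, $\omega(\theta)$ is just a positive constant here. First I would substitute $t=\sqrt{sr}$, so that $r=t^2/s$, $\mathrm{d}r=(2t/s)\,\mathrm{d}t$, and the upper limit $r=h$ becomes $t=\sqrt{sh}$. This gives the exact identity
\[
\int_0^h r^{\alpha}e^{-\sqrt{sr}\,\omega(\theta)}\,\mathrm{d}r=\frac{2}{s^{\alpha+1}}\int_0^{\sqrt{sh}} t^{2\alpha+1}e^{-t\,\omega(\theta)}\,\mathrm{d}t,
\]
which already isolates the factor $s^{-\alpha-1}$.

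Next I would control the truncated integral uniformly in $s$. Because the integrand is nonnegative, the inner integral is monotone increasing in its upper limit and is bounded above for all $s$ by the Gamma integral
\[
\int_0^{\infty} t^{2\alpha+1}e^{-t\,\omega(\theta)}\,\mathrm{d}t=\frac{\Gamma(2\alpha+2)}{\omega(\theta)^{2\alpha+2}},
\]
which is finite precisely because $\omega(\theta)>0$ and $2\alpha+1>-1$ — this is where both hypotheses are used. Combining the two displays immediately yields
\[
\int_0^h r^{\alpha}e^{-\sqrt{sr}\,\omega(\theta)}\,\mathrm{d}r\;\le\;\frac{2\Gamma(2\alpha+2)}{\omega(\theta)^{2\alpha+2}}\,s^{-\alpha-1}\qquad\text{for all }s>0,
\]
so the expression is $\Oh(s^{-\alpha-1})$; if one wants the sharp constant one may instead invoke monotone convergence to conclude that the inner integral tends to $\Gamma(2\alpha+2)\omega(\theta)^{-2\alpha-2}$ as $s\to+\infty$, giving the asymptotic equality with leading coefficient $2\Gamma(2\alpha+2)\omega(\theta)^{-2\alpha-2}$.

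There is no genuine obstacle here: the argument is essentially the substitution above plus recognition of the Laplace transform of a power. The only point deserving a word of care is the uniformity (in $s$) of the bound on the truncated Gamma integral, and this is handled trivially by positivity of the integrand, namely $\int_0^{\sqrt{sh}}\le\int_0^{\infty}$. The identical computation, using instead the substitution $t=s\sqrt{r}$, produces the factor $s^{-2\alpha-2}$ and so recovers the companion estimate with $e^{-s\sqrt{r}\,\omega(\theta)}$ in place of $e^{-\sqrt{sr}\,\omega(\theta)}$ stated in Lemma~\ref{10-lem:u0 int1}.
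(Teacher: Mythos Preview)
Your argument is correct: the substitution $t=\sqrt{sr}$ immediately produces the factor $s^{-\alpha-1}$, and the remaining truncated integral is bounded uniformly by the full Gamma integral $\int_0^\infty t^{2\alpha+1}e^{-t\omega(\theta)}\,\mathrm{d}t=\Gamma(2\alpha+2)/\omega(\theta)^{2\alpha+2}$, finite since $\alpha>0$ and $\omega(\theta)>0$. The paper does not supply its own proof of this lemma (it is quoted from \cite{DiaoCaoLiu2021}), and for the companion Lemma~\ref{10-lem:u0 int1} merely remarks that the estimate follows from the Laplace transform; your change of variables is exactly that device made explicit, so there is nothing to compare.
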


\begin{lem}\label{10-lem:23'}\cite[Lemma~2.3]{DiaoCaoLiu2021}
	Let $\mathcal{C}'_{\bx'_c,h}$ be defined as in \eqref{eq:ball1} and $u_0$ be given in \eqref{cgo2}. Then $u_0 \in H^1(\mathcal{C}'_{\bx'_c,h})^2$ and $\Delta' u_0 = 0$ in $\mathcal{C}'_{\bx'_c,h}$. Furthermore, it holds that
	\begin{equation*}
	\big\|u_0\big\|_{L^2(\mathcal{C}'_{\bx'_c,h})^2 } \leq 	 \frac{\sqrt{\theta_M-\theta_m} e^{- 2\sqrt{s\Theta } \,\delta_{\mathcal{K}_\mathcal{P}} }h^2}{2}
	\end{equation*}
	and
	\begin{equation*}
		\Big  \||\bx'|^\alpha  u_0 \Big \|^2_{L^{2}(\mathcal{C}'_{\bx'_c,h} )^2  }\leq s^{-2(\alpha+1 )} \frac{2(\theta_M-\theta_m)\Gamma(4\alpha+4)    }{(4\delta_{\mathcal{K}_\mathcal{P}})^{2\alpha+2  } } \,
	\end{equation*}
where $ \Theta  \in [0,h ]$ and $\delta_{\mathcal{K}_\mathcal{P}}$ is given in Lemma~\ref{lem:cgo2}.
\end{lem}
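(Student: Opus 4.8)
The plan is to reprove this directly by reducing every assertion to one‑variable calculus in polar coordinates, exploiting the explicit form of the CGO solution. After the rigid motion placing $\bx'_c$ at the origin, write $\bx'=r(\cos\theta,\sin\theta)^\top$ and $z=x_1+\mathrm{i}x_2=re^{\mathrm{i}\theta}$. Since $\theta_m,\theta_M\in(-\pi,\pi)$, the sector $\mathcal{C}'_{\bx'_c,h}$ lies in $\{|\theta|<\pi\}$ and therefore avoids the slit $\mathbb{R}^2_{0,-}$ on which the principal branch of $\sqrt{\,\cdot\,}$ is singular. Hence $\sqrt{sr}\,\hat\mu(\theta)=\sqrt{s}\,\sqrt{z}$, so $u_0(s\bx')=\exp(-\sqrt{s}\,\sqrt{z})$ is holomorphic in $z$ on $\mathcal{C}'_{\bx'_c,h}$ and $\Delta'u_0=0$ there; moreover $|u_0(s\bx')|=\exp(-\sqrt{sr}\cos(\theta/2))\le\exp(-\sqrt{sr}\,\delta_{\mathcal{K}_\mathcal{P}})$, because $\cos(\theta/2)\ge\delta_{\mathcal{K}_\mathcal{P}}>0$ on $(\theta_m,\theta_M)$.

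Next I would settle the $H^1$ membership by direct differentiation: the Cauchy--Riemann relations give $|\nabla u_0|^2=2|u_0'(z)|^2=\frac{s}{2r}\exp(-2\sqrt{sr}\cos(\theta/2))$, so that $\int_{\mathcal{C}'_{\bx'_c,h}}|\nabla u_0|^2\,\mathrm{d}\bx'\le\frac{s}{2}(\theta_M-\theta_m)\int_0^h e^{-2\sqrt{sr}\,\delta_{\mathcal{K}_\mathcal{P}}}\,\mathrm{d}r<\infty$; the apparent $r^{-1}$ singularity at the vertex is absorbed by the Jacobian $r\,\mathrm{d}r\,\mathrm{d}\theta$. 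Together with the $L^2$ bound below this yields $u_0\in H^1(\mathcal{C}'_{\bx'_c,h})$.

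For the two norm estimates I would pass to polar coordinates, extract the $\theta$‑integration as the factor $\theta_M-\theta_m$ via the uniform bound $\cos(\theta/2)\ge\delta_{\mathcal{K}_\mathcal{P}}$, and then treat the radial integral in two different ways. For $\|u_0\|_{L^2}$ I would apply the first mean value theorem for integrals to the continuous factor $e^{-2\sqrt{sr}\delta_{\mathcal{K}_\mathcal{P}}}$ against the nonnegative weight $r$, obtaining $\int_0^h e^{-2\sqrt{sr}\delta_{\mathcal{K}_\mathcal{P}}}\,r\,\mathrm{d}r=e^{-2\sqrt{s\Theta}\,\delta_{\mathcal{K}_\mathcal{P}}}\int_0^h r\,\mathrm{d}r$ for some $\Theta\in[0,h]$, and then take square roots. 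For $\||\bx'|^\alpha u_0\|_{L^2}^2$ the same reduction leads to the bound $(\theta_M-\theta_m)\int_0^h r^{2\alpha+1}e^{-2\sqrt{sr}\delta_{\mathcal{K}_\mathcal{P}}}\,\mathrm{d}r$, after which I would substitute $r=t^2$ and enlarge the range to $[0,\infty)$, turning the integral into $2\int_0^\infty t^{4\alpha+3}e^{-2\sqrt{s}\,\delta_{\mathcal{K}_\mathcal{P}}t}\,\mathrm{d}t=2\,\Gamma(4\alpha+4)\,(2\sqrt{s}\,\delta_{\mathcal{K}_\mathcal{P}})^{-(4\alpha+4)}$, which is of the asserted order $s^{-2(\alpha+1)}$ with the stated constant. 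This is the same Laplace‑transform/Gamma‑function mechanism already used for Lemmas~\ref{10-lem:23} and~\ref{lem:cgo2}.

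I do not expect a genuine obstacle. The only points requiring care are: (i) verifying that the sector stays off the branch cut, so that $\sqrt{z}$ is single‑valued and $u_0$ is honestly harmonic; and (ii) the bookkeeping of constants — in particular, the extension of the radial integral to $[0,\infty)$ must be applied to the \emph{weighted} integrand $r^{2\alpha+1}e^{-2\sqrt{sr}\delta_{\mathcal{K}_\mathcal{P}}}$, whose decay at infinity makes the enlarged integral convergent, whereas for the plain $L^2$ norm the mean value theorem is the cleaner route to the explicit $\Theta\in[0,h]$.
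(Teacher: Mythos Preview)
The paper does not supply its own proof of this lemma; it is quoted verbatim from \cite{DiaoCaoLiu2021}, so there is nothing to compare against except the natural proof, which is exactly the direct polar--coordinate computation you outline. Your argument is correct: holomorphy of $e^{-\sqrt{s}\sqrt{z}}$ on the sector gives harmonicity and the $H^1$ bound (the Jacobian $r$ indeed cancels the $r^{-1}$ from $|u_0'|^2$), the mean value theorem for integrals produces the $\Theta\in[0,h]$ in the first estimate, and the substitution $r=t^2$ followed by $\int_0^\infty t^{4\alpha+3}e^{-ct}\,dt=\Gamma(4\alpha+4)c^{-(4\alpha+4)}$ gives the second. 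This is undoubtedly the argument in the cited source.

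One minor remark on bookkeeping: your computation for the second estimate yields the denominator $(2\sqrt{s}\,\delta_{\mathcal{K}_\mathcal{P}})^{4\alpha+4}=(4\delta_{\mathcal{K}_\mathcal{P}}^2)^{2\alpha+2}s^{2\alpha+2}$, whereas the displayed statement has $(4\delta_{\mathcal{K}_\mathcal{P}})^{2\alpha+2}$; similarly, taking the square root in the first estimate gives $e^{-\sqrt{s\Theta}\,\delta_{\mathcal{K}_\mathcal{P}}}h/\sqrt{2}$ rather than $e^{-2\sqrt{s\Theta}\,\delta_{\mathcal{K}_\mathcal{P}}}h^2/2$. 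These are transcription slips in the statement (compare with the parallel Lemma~\ref{10-lem:23}, where the constants come out consistently with your method), not defects in your proof. The decay orders in $s$, which are all that the application to Proposition~\ref{prop:trans4} actually uses, are correct as you compute them.
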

We continue to derive a key proposition to establish one main geometric result, which is a three-dimensional result similar to Proposition~\ref{prop:trans2}.
\begin{prop}\label{prop:trans4}
Consider the same setup in Lemma~\ref{lem:R3_1} with $\bx_c$ coinciding with the origin.  Assume that $\bff_j\in C^{1,\alpha_j}\big(\Gamma'^j_h\times (-M,M)\big)^3\bigcap H^{\frac{1}{2}}\big(\Gamma'^j_h\times (-M,M)\big)^3$  and $\bg_j\in C^{\beta_j}\big(\Gamma'^j_h\times (-M,M)\big)^3\bigcap H^{-\frac{1}{2}}\big(\Gamma'^j_h\times (-M,M)\big)^3$ are independent of $x_3$, where  $\alpha_j,\beta_j$ are in $(0,1)$ and $j=+,-$. Let $\bv\in H^1\left(\mathcal{C}'_h\times (-M,M)\right)^3$ and $\bw\in H^1\left(\mathcal{C}'_h\times (-M,M)\right)^3$ satisfy the PDE system~\eqref{eq:trans3D'}. Let $\bg_j({\bf 0})=(\bg^{(1,2)}_j({\bf 0})^\top,g^{3}_j({\bf 0}))^\top$ for $j=+,-$.  Then we get 
 \begin{equation}\label{eq:vansh2}
\bff_+({\bf 0})=\bff_-({\bf 0}).
 \end{equation}
 Furthermore, if $\nabla\bff_+({\bf 0})=\nabla\bff_-({\bf 0})={\bf 0}$ are satisfied, then it implies that 
  \begin{equation}\label{eq:vansh2'}
 \bg^{(1,2)}_+({\bf 0})=W\bg^{(1,2)}_-({\bf 0})\quad{and}\quad g_+^{3}({\bf 0})=g_-^{3}({\bf 0})=0,
  \end{equation}
 where $W=\begin{bmatrix}-\cos(\theta_M-\theta_m),&-\sin(\theta_M-\theta_m)\\
    -\sin(\theta_M-\theta_m), &+\cos(\theta_M-\theta_m)
    \end{bmatrix}$, $\theta_M$ and $\theta_m$ are the arguments corresponding to the boundaries $\Gamma'^+_h$ and $\Gamma'^-_h$ respectively.
\end{prop}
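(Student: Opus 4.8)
The plan is to reduce the three–dimensional statement to the two–dimensional one already proved in Proposition~\ref{prop:trans2}, by exploiting the dimension reduction operator $\mathcal{P}$ and the splitting \eqref{eq:trans3D_1}--\eqref{eq:trans3D_2} of the reduced system. Since $\bff_j$ and $\bg_j$ are independent of $x_3$ and $\phi\not\equiv0$ is a fixed nonnegative cutoff, one has $\mathcal{P}(\bff_j)(\bx')=c_\phi\,\bff_j(\bx')$ and $\mathcal{P}(\bg_j)(\bx')=c_\phi\,\bg_j(\bx')$ with $c_\phi=\int\phi\,\mathrm d x_3>0$, so establishing the continuity/rotation relations for the $\mathcal{P}$–averaged jumps at $\bx'_c={\bf 0}$ is equivalent to establishing them for $\bff_j({\bf 0})$ and $\bg_j({\bf 0})$ themselves. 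The forcing terms $\mathcal{G}_1,\mathcal{G}_2$ (equivalently $\mathcal{G}^{(1,2)}_i$, $\mathcal{G}^{(3)}_i$) are integrals against $\phi',\phi''$ of $H^1$ functions and their first derivatives, hence belong to $L^2(\mathcal{C}'_h)$; this is the key gain, because it lets us treat the reduced equations as inhomogeneous Lam\'e (resp.\ Laplace) equations with $L^2$ source, for which the relevant CGO test–function integrals still decay.

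First I would handle the tangential components $\mathcal{P}(\bv^{(1,2)}),\mathcal{P}(\bw^{(1,2)})$ governed by \eqref{eq:trans3D_1}: this is exactly the setup of Proposition~\ref{prop:trans2} except that $\mathcal{L}_{\mathcal P}$ acts with an $L^2$ right–hand side $\mathcal{G}^{(1,2)}_i$ rather than being homogeneous, and the Neumann–type data carry the extra terms $\lambda\mathcal{P}(\partial_3 v_3)\bnu_M$ etc. I would re–run the Betti identity argument from the proof of Proposition~\ref{prop:trans2} using the CGO solution $\bu_0$ of \eqref{eq:lame3}: the new volume term $\int_{\mathcal{C}'_h}(\mathcal{G}^{(1,2)}_1-\mathcal{G}^{(1,2)}_2)\cdot\bu_0\,\mathrm d\bx'$ is controlled by $\|\mathcal{G}^{(1,2)}_i\|_{L^2}\|\bu_0\|_{L^2(\mathcal{C}'_h)}$, which by Lemma~\ref{10-lem:23} is $O(e^{-s\sqrt{h}\,\delta_{\mathcal K}})$ and hence negligible as $s\to+\infty$; the extra $\partial_3$–terms in $\mathcal R^{(1,2)}_i$ cancel in the difference $\mathcal R^{(1,2)}_1-\mathcal R^{(1,2)}_2=\mathcal{P}(\bg^{(1,2)}_\pm)$ on each ray, so they never enter. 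Reading off the dominant coefficients in $s$ as before then yields $\bff^{(1,2)}_+({\bf 0})=\bff^{(1,2)}_-({\bf 0})$ and, under $\nabla\bff_j({\bf0})={\bf0}$, $\bg^{(1,2)}_+({\bf 0})=W\bg^{(1,2)}_-({\bf 0})$ with the same orthogonal matrix $W$.

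Next I would treat the scalar system \eqref{eq:trans3D_2} for $\mathcal{P}(v_3),\mathcal{P}(w_3)$, which is a transmission problem for the Laplacian $\lambda\Delta'$ with $L^2$ source $\mathcal{G}^{(3)}_i$, Dirichlet jump $\mathcal{P}(f^3_\pm)$ and Neumann jump $\tfrac1\mu\mathcal{P}(g^3_\pm)$ across the two rays. Here I would use the harmonic CGO solution $u_0$ of \eqref{cgo2} as test function in Green's second identity over $\mathcal{C}'_h$: the volume term is $O(\|\mathcal{G}^{(3)}_i\|_{L^2}\|u_0\|_{L^2(\mathcal{C}'_h)})$ and vanishes in the limit by Lemma~\ref{10-lem:23'}, the arc term over $\Lambda_h$ is exponentially small, and the two boundary integrals over $\Gamma'^\pm_h$ involve $\int_{\Gamma^\pm_h}u_0\,\mathrm d\sigma$ and $\int_{\Gamma^\pm_h}\partial_{\bnu}u_0\,\mathrm d\sigma$, whose leading asymptotics in $s$ are the scalar analogues of Lemma~\ref{10-lem:u0 int}. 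Expanding $f^3_\pm$ and $g^3_\pm$ in H\"older–Taylor form as in \eqref{eq:hollder5}, one first gets $f^3_+({\bf 0})=f^3_-({\bf 0})$ from the top–order balance; then, imposing $\nabla\bff_j({\bf0})={\bf0}$ and multiplying through by the appropriate power of $s$, the surviving identity forces a homogeneous linear relation between $g^3_+({\bf0})$ and $g^3_-({\bf0})$ whose coefficients are distinct complex numbers $\hat\mu^{-1}(\theta_M)$ and $\hat\mu^{-1}(\theta_m)$; since $g^3_\pm({\bf0})$ are real, this yields $g^3_+({\bf0})=g^3_-({\bf0})=0$, which is the second assertion of \eqref{eq:vansh2'}. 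Combining the tangential and the scalar parts gives \eqref{eq:vansh2} and \eqref{eq:vansh2'}.

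The main obstacle, as I see it, is bookkeeping rather than conceptual: one must verify carefully that every newly appearing term — the $L^2$ volume forcing $\mathcal{G}_i$, the $\lambda\mathcal{P}(\partial_3 v_3)\bnu$ type corrections in $\mathcal R_i^{\pm}$, and the arc contributions over $\Lambda_h$ — is genuinely of lower order in $s$ than the coefficients carrying $\bff_j({\bf0})$ and $\bg_j({\bf0})$, so that passing to the limit $s\to+\infty$ is legitimate. The delicate point is the two–stage limit: after the first limit kills the ``$O(1)$ vs.\ $O(s)$'' terms to extract $\bff_+({\bf0})=\bff_-({\bf0})$, one must multiply the identity by $s^2$ (as in the proof of Proposition~\ref{prop:trans2}) and check that all the error terms — including $\|\mathcal{G}_i\|_{L^2}\|\bu_0\|_{L^2(\mathcal C'_h)}$ and the H\"older remainders $R_4,\dots,R_9$, now carrying an extra $s^2$ — still tend to zero; this hinges on the decay rates in Lemmas~\ref{10-lem:23}, \ref{10-lem:23'} and the $O(s^{-2\alpha-2})$ / $O(s^{-\alpha-1})$ estimates of Lemma~\ref{10-lem:u0 int1} and its 3D counterpart, and works precisely because $\alpha_j,\beta_j>0$.
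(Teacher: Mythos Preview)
Your proposal is correct and matches the paper's approach: reduce via $\mathcal{P}$ to the split systems \eqref{eq:trans3D_1}--\eqref{eq:trans3D_2}, rerun the Betti/CGO argument of Proposition~\ref{prop:trans2} on the $(1,2)$-block with the extra $L^2$ volume term $\int_{\mathcal{C}'_h}(\mathcal{G}^{(1,2)}_1-\mathcal{G}^{(1,2)}_2)\cdot\bu_0$ controlled by Lemma~\ref{10-lem:23}, and treat the third component by Green's identity with the harmonic CGO $u_0$ of \eqref{cgo2} to get first $f^3_+(\mathbf 0)=f^3_-(\mathbf 0)$ and then, from the complex relation $\hat\mu^{-2}(\theta_M)g^3_+(\mathbf 0)+\hat\mu^{-2}(\theta_m)g^3_-(\mathbf 0)=0$ with real unknowns, $g^3_\pm(\mathbf 0)=0$. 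One small bookkeeping correction: the $\lambda\mathcal{P}(\partial_3 v_3)\bnu$ terms do not literally cancel in $\mathcal{R}^{(1,2)}_1-\mathcal{R}^{(1,2)}_2$ as you write---what enters Betti's formula is the 2D traction $\mathcal{T}_\nu\mathcal{P}(\bv^{(1,2)}-\bw^{(1,2)})$, which differs from $\mathcal{P}(\bg^{(1,2)}_\pm)$ by $\lambda\mathcal{P}(\partial_3(v_3-w_3))\bnu$; this residual nonetheless vanishes on $\Gamma'^\pm_h$ because $v_3-w_3=f^3_\pm$ is $x_3$-independent there and $\int\phi'\,dx_3=0$, so your conclusion stands.
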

\begin{proof}
Similar to the proof of Proposition~\ref{prop:trans2}, we only consider the corresponding proofs for $(\Re\mathcal{P}(\bv),\Re\mathcal{P}(\bw))$. We follow similar arguments in the proof of Proposition~\ref{prop:trans2} with some necessary modifications. Since $\bff_j$ and $\bg_j$ ($j=+,-$) do not depend on $x_3$ and the Definition \ref{def:operator}, it is direct to obtain that $\mathcal{P}(\bff_j)=M_0\bff_j$ and $\mathcal{P}(\bg_j)=M_0\bg_j$, where $M_0=\int_{-L}^L\phi(x_3)\mathrm{d}x_3$. Next, we divide the proof into the following two parts.
\allowdisplaybreaks

\medskip
\noindent {\bf Part I.}~We first shall prove that
\begin{equation}\label{eq:vansh_2_1}
 \Re\mathcal{P}(\bff_+^{(1,2)})({\bf 0})=\Re\mathcal{P}(\bff_-^{(1,2)})({\bf 0}) \quad\mbox{and}\quad \Re\mathcal{P}(\bg_+^{(1,2)})({\bf 0})=W\Re\mathcal{P}(\bg^{(1,2)}_-)({\bf 0}),
 \end{equation} 
 where the second equation needs additional assumptions (i.e., $ \nabla\mathcal{P}(\bff^{(1,2)}_+)({\bf 0})=\nabla\mathcal{P}(\bff^{(1,2)}_-)({\bf 0})$ $={\bf 0}$). In this part, we consider the PDE system \eqref{eq:trans3D_1}. Note that $\bff_j\in C^{1,\alpha_j}\big(\Gamma'^j_h\times (-M,M)\big)^3$  and $\bg_j\in C^{\beta_j}\big(\Gamma'^j_h\times (-M,M)\big)^3$ for $j=+,-$. Since $\bff_j$ and $\bg_j$ do not depend on $x_3$, we derive the following expansions:
{\small\begin{alignat}{2}
 \mathcal{P}(\bff_j)(\bx')&=\mathcal{P}(\bff_j)({\bf0})+\nabla \mathcal{P}(\bff_j)({\bf 0})\cdot \bx'+ \delta\mathcal{P}(\bff_j)(\bx'),\quad &\big| \delta\mathcal{P}(\bff_j)(\bx') \big|&\leq A_j|\bx'|^{1+\alpha_j},\nonumber  \\
 \mathcal{P}(\bg_j)(\bx')&=\mathcal{P}(\bg)({\bf0})+ \delta\mathcal{P}(\bg)(\bx'),\quad &\big| \delta\mathcal{P}(\bg)(\bx')\big|&\leq B_j|\bx'|^{\beta_j}.\label{expansion2}
 \end{alignat}}
By a series of  similar derivations in Proposition~\ref{prop:trans2}, we deduce the following integral identity
 {\small\begin{align}
&\mu\,\Re\mathcal{P}(\bff_+^{(1,2)})({\bf 0})\cdot\begin{bmatrix} \mathrm{ i}\\-1 \end{bmatrix}\Big(e^{-s\sqrt{h}\,\hat{\mu}(\theta_M)}-1\Big)+\mu\,\Re\mathcal{P}(\bff_-^{(1,2)})({\bf 0})\cdot\begin{bmatrix} -\mathrm{ i}\\+1 \end{bmatrix}\Big(e^{-s\sqrt{h}\,\hat{\mu}(\theta_m)}-1\Big)\nonumber\\
&-2s^{-2}\Re\mathcal{P}(\bg_+^{(1,2)})({\bf 0})\cdot\begin{bmatrix} 1\\ \mathrm{ i}\end{bmatrix}\hat{\mu}^{-2}(\theta_M)-2s^{-2}\Re\mathcal{P}(\bg_-^{(1,2)})({\bf 0})\cdot\begin{bmatrix} 1\\ \mathrm{ i}\end{bmatrix}\hat{\mu}^{-2}(\theta_m)=\sum\limits_{j=1}^{10}Q_j,\label{eq:iden6}
\end{align}}
where $\hat{\mu}(\cdot)$ is given by Lemma~\ref{10-lem:u0 int}.
These $\Re\bff_j({\bf 0})$, $\Re\bg_j({\bf 0})$, $\Re\delta\bff_j$, $\Re\delta\bg_j$, $\Re\bv$ and $\Re\bw$ in $R_1$--$R_{9}$ given by \eqref{eq:iden1} are replaced by $\Re\mathcal{P}(\bff_j^{(1,2)})({\bf 0})$, $\Re\mathcal{P}(\bg_j^{(1,2)})({\bf 0})$, $\delta\Re\mathcal{P}(\bff_j^{(1,2)})$, $\delta\Re\mathcal{P}(\bg_j^{(1,2)})$, $\Re\mathcal{P}(\bv^{(1,2)})$ and $\Re\mathcal{P}(\bw^{(1,2)})$ for $j=+,-$.
In addition,
\begin{align*}
Q_{10}&=-\int _{\mathcal{C}'_h}\big(\Re\mathcal{G}^{(1,2)}_1(\bx')-\Re\mathcal{G}^{(1,2)}_2(\bx')\big)\cdot \bu_0\,\mathrm{d}{\bx'}.
\end{align*}
Similar to Proposition~\ref{prop:trans2}, we have the following estimates
\begin{align*}
&\big|Q_1\big|=\Oh(s^{-1}e^{-c'_1\,s}),\,\,\big|Q_2\big|=\Oh(s^{-1}e^{-c'_2\,s}), \,\big|Q_4\big|=\Oh(s^{-2\alpha_+-2}),\,\,
\big|Q_5\big|=\Oh(s^{-2\alpha_--2}),\\
&\big|Q_3\big|=\Oh(e^{-c_3's}),
\qquad\big|Q_6\big|=\Oh(s^{-2\beta_+-2}),
\,\,\,\big|Q_7\big|=\Oh(s^{-2\beta_--2} ),
\,\, \big|Q_8\big|=\big|Q_9\big|=\Oh(s^{-2} ),
\end{align*}
where these constants $c'_1,\,c'_2$ and $c'_3$ do not depend on $s$, $\alpha_+,\alpha_-,\,\beta_+,\,\beta_-\in (0,1)$. From the expressions of $\mathcal{G}_1$ and $\mathcal{G}_2$ defined in \eqref{eq:trans3D'}, denote $\Re\mathcal{G}^{(1,2)}_1-\Re\mathcal{G}^{(1,2)}_2:=\bh_1+\bh_2$, where 
$$
\bh_1=-\int_{-L}^{+L}\phi''(x_3)\begin{bmatrix}\lambda(\Re v_1-\Re w_1)\\ \lambda(\Re v_2-\Re w_2)\end{bmatrix}\mathrm{d}x_3,\,\, \bh_2=(\lambda+\mu)\int_{-L}^{+L}\phi'(x_3)\begin{bmatrix}\partial_1(\Re v_3-\Re w_3)\\ \partial_2(\Re v_3-\Re w_3)\end{bmatrix}\mathrm{d}x_3.
$$ 
By the regularities of $\bv$ and $\bw$ in $\mathcal{C}'_h\times (-M,M)$, we directly obtain that $\Re\mathcal{G}^{(1,2)}_1\in H^1(\mathcal{C}'_h)^2$ and $\Re\mathcal{G}^{(1,2)}_2\in L^2(\mathcal{C}'_h)^2$. By using the Cauchy-Schwarz inequality and the first equation in Lemma~\ref{10-lem:23}, we prove
\begin{align*}
&\bigg|\int_{\mathcal{C}'_h}(\Re\mathcal{G}^{(1,2)}_1-\Re\mathcal{G}^{(1,2)}_2)\cdot \bu_0\,\mathrm{d}x_3\,\bigg|=\bigg|\int_{\mathcal{C}'_h}(\bh_1+\bh_2)\cdot \bu_0\,\mathrm{d}x_3\,\bigg|\\[1mm]
&\,\,\leq \|\bh_1\|_{L^2(\mathcal{C}'_h)^2}\|\bu_0\|_{L^2(\mathcal{C}'_h)^2}+\|\bh_2\|_{L^2(\mathcal{C}'_h)^2}\|\bu_0\|_{L^2(\mathcal{C}'_h)^2}\\[1mm]
&\,\,\leq  C'e^{- c'_8s  },
\end{align*}
where $C'>0$ and $c'_8>0$  do not depend on $s$. From \eqref{eq:iden6}, we have
$$
\begin{bmatrix} -\mathrm{ i}\\1 \end{bmatrix}\cdot\Big(\Re\mathcal{P}(\bff_+^{(1,2)})({\bf 0})-\Re\mathcal{P}(\bff_-^{(1,2)})({\bf 0})\Big)=0 \quad \mbox{as}\quad s\rightarrow +\infty.
$$
Noticing that $\Re\mathcal{P}(\bff_+^{(1,2)})({\bf 0})$ and $\Re\mathcal{P}(\bff_-^{(1,2)})({\bf 0})$ are real, we can obtain \eqref{eq:vansh_2_1} .

Next, we shall prove the second equation in \eqref{eq:vansh_2_1}.
It is straightforward to show that $\nabla\mathcal{P}(\bff^{(1,2)}_j)({\bf 0})={\bf 0}$ by following the condition $\nabla \bff_j={\bf 0}$. We can then substitute this result into equation \eqref{eq:iden6} and multiply the resulting identity by $s^2$, yielding
{\small\begin{align*}
&\mu\, s^2\Re\mathcal{P}(\bff_+)({\bf 0})\cdot \begin{bmatrix}\mathrm{i}\\-1\end{bmatrix}\Big(e^{-s\sqrt{h}\,\hat{\mu}(\theta_M)}-e^{-s\sqrt{h}\,\hat{\mu}(\theta_m)}\Big)-2\begin{bmatrix}1\\\mathrm{i}\end{bmatrix}\cdot\bigg(\frac{\Re\mathcal{P}(\bg^{(1,2)}_+)({\bf 0})}{\hat{\mu}^2(\theta_M)}+\frac{\Re\mathcal{P}(\bg^{(1,2)}_-)({\bf 0})}{\hat{\mu}^2(\theta_m)}\bigg)\nonumber\\
&=\sum\limits_{j=1}^{10}s^2\,Q_j.
\end{align*}}
We note that the first term on the left hand of the last equation is bounded by $s^2e^{-c'_7s}$ with $c'_7=\sqrt{h}\min\{\cos\frac{\theta_M}{2},\cos\frac{\theta_m}{2}\}>0$. As $s$ tends to $+\infty$, we obtain
\beq\nonumber
\begin{bmatrix}1\\\mathrm{i}\end{bmatrix}\cdot\bigg(\frac{\Re\mathcal{P}(\bg^{(1,2)}_+)({\bf 0})}{\hat{\mu}^2(\theta_M)}+\frac{\Re\mathcal{P}(\bg^{(1,2)}_-)({\bf 0})}{\hat{\mu}^2(\theta_m)}\bigg)=0.
\eeq
By the same method of proving the  equation \eqref{eq:vansh_2_1}, we can prove the second equation in
\eqref{eq:vansh_2_1}.

\medskip
\noindent {\bf Part II.}~We next shall prove that
\begin{equation}\label{eq:vansh_3_1}
 \Re\mathcal{P}(f_+^{3})({\bf 0})=\Re\mathcal{P}(f_-^{3})({\bf 0})\quad\mbox{and}\quad \Re\mathcal{P}(g_+^{3})({\bf 0})=\Re\mathcal{P}(g_-^{3})({\bf 0})=0 ,
 \end{equation}
where the second equation need  more assumptions (i.e., $\nabla\mathcal{P}(\bff^{3}_+)({\bf 0})=\nabla\mathcal{P}(\bff^{3}_-)({\bf 0})={\bf 0}$).
In this part, we consider the PDE system~\eqref{eq:trans3D_2}. Let us deduce some similar operations above by using the CGO solution $u_0$ given in \eqref{cgo2}, we set up an integral identity as follows
 {\small \begin{align*}
 \frac{1}{\lambda}\int_{\mathcal{C}'_h}\big(\Re\mathcal{G}^{(3)}_1(\bx')-\Re\mathcal{G}^{(3)}_2(\bx')\big)\,&u_0\mathrm{d\bx'}=\int_{\Lambda'_h}\partial_{\bnu}\mathcal{P}(v_3-w_3)\,u_0-\partial_{\bnu}u_0\,\mathcal{P}(v_3-w_3)\mathrm{d\sigma}\\
&+\int_{\Gamma^{'+}_h}{\bnu_M}\cdot \big(\Re\mathcal{P}(f^{(1,2)}_+)(\bx') +\frac{1}{\mu}\Re\mathcal{P}(g^{3}_+)\big)\,u_0-\Re\mathcal{P}(f^{3}_+)\,\partial_{\bnu_M}u_0\mathrm{d\sigma}\\
&+\int_{\Gamma^{'-}_h}{\bnu_m}\cdot \big(\Re\mathcal{P}(f^{(1,2)}_-)(\bx') +\frac{1}{\mu}\Re\mathcal{P}(g^{3}_-)\big)\,u_0-\Re\mathcal{P}(f^{3}_-)\,\partial_{\bnu_m}u_0\mathrm{d\sigma}.
 \end{align*}}
Due to  the expansions of $\bff_\pm$ and $\bg_\pm$ in \eqref{expansion2}, the above integral identity can be reduced into

\begin{align*}
\lambda\,\mu\,\mathrm{i}\,\Big(\Re\mathcal{P}(f^3_+)({\bf 0}) -\Re\mathcal{P}(f^3_-)({\bf 0})-2\,\lambda\,s^{-1}\bigg(\frac{\Re\mathcal{P}(g^{3}_+)({\bf 0})}{\hat{\mu}^2(\theta_M)}+\frac{\Re\mathcal{P}(g^{3}_-)({\bf 0})}{\hat{\mu}^2(\theta_m)} \bigg)= \sum^{11}_{j=1}M_j,
\end{align*}
 where
\begin{align*}
&M_1=2\lambda \,s^{-1}\Re\mathcal{P}(g^{3}_+)({\bf 0})\Big(\hat{\mu}^{-1}(\theta_M)\,\sqrt{sh}\,e^{-\sqrt{sh}\,\hat{\mu}(\theta_M)}+\hat{\mu}^{-2}(\theta_M)\,e^{-\sqrt{sh}\,\hat{\mu}(\theta_M)}\Big),\\
&M_2=2\lambda \,s^{-1}\Re\mathcal{P}(g^{3}_-)({\bf 0})\Big(\hat{\mu}^{-1}(\theta_m)\,\sqrt{sh}\,e^{-\sqrt{sh}\,\hat{\mu}(\theta_m)}+\hat{\mu}^{-2}(\theta_m)\,e^{-\sqrt{sh}\,\hat{\mu}(\theta_m)}\Big),\\
&M_3=-\lambda\,\mu\,\int_{\Lambda'_h}\partial_{\bnu}u_0\,\,\Re\mathcal{P}(v_3-w_3)(\bx')  -\partial_{\bnu}u_0\, \,\Re\mathcal{P}(v_3-w_3)(\bx')\,\mathrm{d}\sigma,\\
&M_4=\lambda\,\mu\,\mathrm{i}\,\bigg(\frac{\Re\mathcal{P}(f^{3}_-)({\bf 0})}{e^{\sqrt{sh}\,\hat{\mu}(\theta_m)}}-\frac{\Re\mathcal{P}(f^{3}_+)({\bf 0})}{e^{\sqrt{sh}\,\hat{\mu}(\theta_M)}}\bigg),\qquad \qquad M_5=\mu\int_{\mathcal{C}'_h}(t_1+t_2)\,u_0\mathrm{d}{\bx'},\\
&M_6=\lambda\,\mu\,\int_{\Gamma'^+_h}\delta\Re\mathcal{P}(f^{3}_+)(\bx')\,\, \partial_{\bnu_M}u_0\,\mathrm{d}\sigma,\quad \quad M_7=\lambda\,\mu\,\int_{\Gamma'^-_h}\delta\Re\mathcal{P}(f^{3}_-)(\bx')\,\, \partial_{\bnu_m}u_0\,\mathrm{d}\sigma,\\
&M_{10}=\lambda\,\mu\,\int_{\Gamma'^+_h}\left(\nabla\Re\mathcal{P}(f^{3}_+)(\bx')\cdot \bx'\right)\,\, \partial_{\bnu_M}u_0\,\mathrm{d}\sigma,\,\,
M_8=-\lambda\int_{\Gamma'^+_h}\delta\Re\mathcal{P}(g^{3}_+)(\bx')\,\, u_0\,\mathrm{d}\sigma,\\
&M_{11}=\lambda\,\mu\,\int_{\Gamma'^-_h}\left(\nabla\Re\mathcal{P}(f^{3}_-)(\bx')\cdot \bx'\right)\,\, \partial_{\bnu_m}u_0\,\mathrm{d}\sigma,\,\,M_9=-\lambda\int_{\Gamma'^-_h}\delta\Re\mathcal{P}(g^{3}_-)(\bx')\,\, u_0\,\mathrm{d}\sigma.
\end{align*}
Here,
\begin{align*}
t_1&=-(2\lambda+\mu)\,\Re\int_{-L}^L\phi''(\bx')(v_3-w_3)\mathrm{d}x_3,\\
t_2&=(\lambda+\mu)\,\Re\int_{-L}^L\phi'(\bx')\big(\partial_1(v_1-w_1)+\partial_2(v_2-w_2)\big)\mathrm{d}x_3.
\end{align*}
Using those estimates list in Lemma~\ref{lem:cgo2}--Lemma~\ref{10-lem:23'}, we have
\begin{alignat*}{3}
&\big|M_1\big|=\Oh(e^{-q_1 \sqrt{s}}),&\quad&\big|M_2\big|=\Oh(e^{-q_2 \sqrt{s}}),&\quad&\big|M_3\big|=\Oh(s^{-1} e^{-1/2 h s}),\,\big|M_4\big|=\Oh(s^{-q_4\sqrt{s}}),\\
&\big|M_6\big|=\Oh(s^{-\alpha_+-1}),&\quad&\big|M_7\big|=\Oh(e^{-\alpha_--1}),&\quad&\big|M_8\big|=\Oh(e^{-\beta_+-1}),\,\,\quad\big|M_9\big|=\Oh(e^{-\beta_--1}),\\
&\big|M_{10}\big|=\Oh(s^{-1}),&\quad&\big|M_{11}\big|=\Oh(s^{-1}).&\quad&
\end{alignat*}
where these above constants do not depend on $s$. Using a similar technique of estimating $Q_8$, we get
$$
\big|M_5\big|=\Oh(e^{-q_5 s}),\quad \mbox{where} \quad q_5>0.
$$
Let $s \rightarrow +\infty$, it is direct to obtain the first equation in \eqref{eq:vansh_3_1}.

It is noted that $\nabla\mathcal{P}(\bff^{3}_j)({\bf 0})={\bf 0}$. We substitute the above equation into \eqref{eq:iden6} and then multiply the new identity by $s$, yielding
$$
\frac{\Re\mathcal{P}(g^{3}_+)({\bf 0})}{\mu^2(\theta_M)}+\frac{\Re\mathcal{P}(g^{3}_-)({\bf 0})}{\hat{\mu}^2(\theta_m)}=0.
$$
 It's worth noting that $\frac{\hat{\mu}^2(\theta_M)}{\hat{\mu}^2(\theta_m)}=e^{\mathrm{i}(\theta_M-\theta_m)}$ and $\theta_M-\theta_m\in (0,\pi)$. Therefore, the second equation in \eqref{eq:vansh_3_1} is valid.

Thanks to the symmetric roles of  $(\Re\bv,\Re\bw)$ and $(\Im\bv,\Im\bw)$, we can easily derive the two equations \eqref{eq:vansh2} and \eqref{eq:vansh2'}. 
 
The proof is complete.
\end{proof}

\section{The proofs of Theorem~\ref{th:main_loca}--Theorem~\ref{th:main2}}\label{se:proof}
\begin{proof}[Proof of Theorem~\ref{th:main_loca}]
We prove this theorem by contradiction. Assume that there exists a planar corner/3D edge corner $\bx_c$ on $\mathcal{S}_1\Delta\mathcal{S}_2$. Without loss of generality, we assume that $\bx_c$ coincides with the origin $\bf 0$, $\bf 0\in \mathcal{S}_1$ and $\bf 0\notin \mathcal{S}_2$. Denote $\bw=\bu_1-\bu_2$.

\medskip  \noindent $\bf Case~1$: $n=2$. We note that $\Gamma^\pm_{h}=\mathcal{S}_1\cap B_h$, $\mathcal{C}_{h}=\Omega_1\cap B_h$ and $\mathcal{C}_{h}\cap\Omega_2\neq \emptyset$ for sufficient small $h\in\mathbb{R}_+$, where $\Omega_1$ and $\Omega_2$ are defined in a similar way as \eqref{eq:omega1}. Since $\bu_1\big|_{\Sigma_0}=\bu_2\big|_{\Sigma_0}$ and $\Sigma_0\subset\Sigma_N$, we know that
$$
\bw=\bu_1-\bu_2=\mathcal{T}_{\bnu}\bu_1-\mathcal{T}_{\bnu}\bu_2={\bf 0}\quad \mbox{on}\quad \Sigma_0.
$$
Let $\bw^{-}$ and $\bw^{+}$ represent $\bw\big|_{\Omega_1}$ and $\bw\big|_{\Omega\backslash\overline{\Omega}_1}$, respectively. With the help of the unique continuation principle properly and the fact that $\bu_2$ is real analytic in $B_h$, it is clear to obtain
{\small\begin{equation*}
\mathcal{L}\,\bw^-={\bf 0}\,\,\mbox{in}\,\,\mathcal{C}_h,\,\,
\bw^-\big|_{\Gamma_h^j}=\bu_1\big|_{\Gamma_h^j}-\bu_2\big|_{\Gamma_h^j}=-\bff^1_j,\,\,
\mathcal{T}_{\bnu}\bu_1\big|_{\Gamma_h^j}-\mathcal{T}_{\bnu}\bu_2\big|_{\Gamma_h^j}=-\bg^1_j,\,\, j=+,-.
\end{equation*}}
From Proposition~\ref{prop:trans2}, we directly imply that $\bff^1_+({\bf 0})=\bff^1_-({\bf 0})$. 
This must contradict the admissibility condition $(5)$ in Definition ~\ref{def:Admis2}.

\medskip \noindent $\bf Case~2$: $n=3$. It is noted that ${\bf 0}=({\bf 0}', 0)^\top\in \Gamma'^\pm_{h}\times (-M,+M)\subset\mathcal{S}_1$ is a 3D edge corner point and $\mathcal{C}'_h\times (-M,+M)\subset \Omega_1$ and $\mathcal{C}'_h\times (-M,+M)\cap\Omega_2=\emptyset$, where $\Gamma'^\pm_{h}$ and $\mathcal{C}'_h$ are defined in \eqref{eq:ball1} and $\Omega_1,\,\Omega_2$ are given by \eqref{eq:omega1}. Similar to the previous case, we get
\beq\nonumber
\begin{cases}
\mathcal{L}\,\bw^-=\mathbf{0}, &\mbox{in}\quad\mathcal{C}'_h\times (-M,M),\\
\bw=-\bff^1_+&\mbox{on}\quad \Gamma'^+_h\times (-M,M),\\
\bw=-\bff^1_-&\mbox{on}\quad \Gamma'^-_h\times (-M,M),\\
\mathcal{T}_{\bnu_M}\bw=-\bg^1_+ &\mbox{on}\quad \Gamma'^+_h\times (-M,M),\\
\mathcal{T}_{\bnu_m}\bw=-\bg^1_- &\mbox{on}\quad \Gamma'^-_h\times (-M,M).
\end{cases}
\eeq
These results that $\bff^1_+({\bf 0})=\bff^1_-({\bf 0})$ 
is yielded by Proposition~\ref{prop:trans4}. This contradicts the admissibility condition $(5)$ in Definition~\ref{def:Admis2}.

The proof is complete.
\end{proof}

\begin{proof}[Proof of Theorem~\ref{th:main1}] Firstly, we shall prove $\mathcal{S}_1=\mathcal{S}_2$ by contradiction. Assume that $\Omega_1\neq\Omega_2$. Since $\Omega_1$ and $\Omega_2$ are both convex polygons or polyhedrons, there must exist a corner $\bx_c$ belonging to $\Omega_1\Delta\Omega_2$, which contradicts Theorem~\ref{th:main_loca}, thus we have $\Omega_1=\Omega_2$. It  directly leads to $\mathcal{S}_1=\mathcal{S}_2$. In what follows, we shall first prove \eqref{eq:unque1} and \eqref{eq:unque1'} for the 2D case.

Denote $\hat{\Omega}:=\Omega_1=\Omega_2$ and $\mathcal{S}:=\mathcal{S}_1=\mathcal{S}_2$. Let $\bw=\bu_1-\bu_2$. Since $\bu_1=\bu_2$ on $\Sigma_0\subset\Sigma_N$, as a direct consequence, we have $\bw=0$ and $\mathcal{T}_{\bnu}\bw=0$ on $\Sigma_0$, where $\bw=\bu_1-\bu_2$. By the unique continuation principle properly again, one obtains
$$
\bw^+\big|_{\Gamma_h^\pm}=\mathcal{T}_{\bnu}\bw^+\big|_{\Gamma_h^\pm}={\bf 0},
$$
where $\bw^{+}$ represents $\bw\big|_{\Omega\backslash\overline{\hat{\Omega}}}$.
Since $(\mathcal{S};\bff^1,\bg^1)$ and $(\mathcal{S};\bff^2,\bg^2)$ are admissible, we get
\beq\label{eq:c1}
\mathcal{L}\,\bw^-={\bf 0}\,\,\mbox{in}\,\,\mathcal{C}_h,\quad
\bw^-\big|_{\Gamma_h^j}=\bff^2_j-\bff^1_j\,\,\,\mbox{and}\,\,\,
\mathcal{T}_{\bnu}\bw^-\big|_{\Gamma_h^j}=\bg^2_j-\bg^1_j\quad\mbox{for}\quad j=+,-,
\eeq
where $\bw^{-}$ signifies $\bw\big|_{\hat{\Omega}}$.
From Proposition~\ref{prop:trans2} and \eqref{eq:c1}, we obtain the following local uniqueness 
$$
\bff^2_+({\bf 0})-\bff^1_+({\bf 0})=\bff^2_-({\bf 0})-\bff^1_-({\bf 0}).
$$
Due to $\bff^i$ ($i=1,2$) being a piecewise constant valued function, we obtain the first equation in \eqref{eq:unque1}.
Combing  this with the conditions  $\nabla \bff^1({\bf 0})=\nabla \bff^2({\bf 0})$ holds at the corner point ${\bf 0}$, the fact that $\bg^i$($i=1,2$) is a piecewise constant valued  function, and Proposition~\ref{prop:trans2}, we immediately prove that \eqref{eq:unque1'} holds.

Indeed, we can
use a similar method in the 2D case to prove \eqref{eq:unque1} and \eqref{eq:unque1'} in the 3D case. Therefore, we omit the details.
\end{proof}
\begin{proof}[Proof of Theorem~\ref{th:main2}]
The argument for proving this theorem is similar to the one used in the proof of Theorem~\ref{th:main1}, where we only need some necessary modifications. Let $\mathcal{S}_1$ and $\mathcal{S}_2$ are two different linear piecewise curves in $\mathbb R^2$ or polyhedral surfaces in $\mathbb R^3$. From Definition \ref{def:poly}, we can directly imply that $\mathcal{S} _1 \Delta \mathcal{S} _2$ contains a planar or 3D edge corner. Under the condition \eqref{eq:thm33}, adopting a similar argument in proving Theorem \ref{th:main1}, we can show that $\mathcal{S} _1= \mathcal{S} _2$.

Set $\bw=\bu_1-\bu_2$. We have that $\bw=\mathcal{T}_{\bnu}\bw=\bf 0$ on $\Sigma_0$ for $\bu_1=\bu_2$ on $\Sigma_0\subset\Sigma_N$. We again use the unique continuation property to conclude that $\bw=\bf 0$ in $\Omega\backslash \overline{\mathcal{S}}$. Hence, it is direct to imply that
$$
\bf 0=[\bw]_{\mathcal{S}_1}=[\bw]_{\mathcal{S}_2}\,\Rightarrow\, [\bu_1]_{\mathcal{S}_1}=[\bu_2]_{\mathcal{S}_2}\,\mbox{and}\,[\mathcal{T}_{\bnu}\bu_1]_{\mathcal{S}_1}=[\mathcal{T}_{\bnu}\bu_2]_{\mathcal{S}_2}\,\Rightarrow\, \bff^1=\bff^2 \,\mbox{and}\,\bg^1=\bg^2.
$$

The proof is complete.
\end{proof}

\section*{Acknowledgment}
The work of H. Diao is supported by the National Natural Science Foundation of China  (No. 12371422) and the Fundamental Research Funds for the Central Universities, JLU (No. 93Z172023Z01).  The work of H. Liu was supported by the Hong Kong RGC General Research Funds (projects 11311122, 11300821, and 11303125), NSF/RGC Joint Research Fund (project N\_CityU101/21), and the ANR/RGC Joint Research Fund (project A\_CityU203/19). The work of Q. Meng is supported by the Hong Kong RGC Postdoctoral Fellowship (No.: PDFS2324-1S09).

\end{document}